\DeclareMathOperator{\sign}{sign}
\numberwithin{equation}{section}
\newtheorem{thm}{Theorem}[section]
\newtheorem{prop}[thm]{Proposition}
\newtheorem{lemma}[thm]{Lemma}
\newtheorem{prob}[thm]{Problem}
\theoremstyle{remark} 
\newtheorem{remark}[]{Remark}
\newtheorem*{definition}{Definition}
\newcommand{\be}{\begin{equation}}
\newcommand{\ee}{\end{equation}}
\newcommand{\EE}{\mathbb{E}}
\newcommand{\PP}{\mathbb{P}}
\newcommand{\RR}{\mathbb{R}}
\newcommand{\CC}{\mathbb{C}}
\newcommand{\DD}{\mathbb{D}}
\newcommand{\ph}{\hat{p}}
\newcommand{\qh}{\hat{q}}
\newcommand{\xh}{\hat{x}}
\newcommand{\yh}{\hat{y}} 
\newcommand{\mP}{\mathcal{P}}
\newcommand{\mH}{\mathcal{H}}
\newcommand{\cE}{\mathcal{E}}
\newcommand{\sA}{\mathscr{A}}
\newcommand{\sF}{\mathscr{F}}
\newcommand{\sP}{\mathscr{P}}
\newcommand{\sK}{\mathscr{K}}
\newcommand{\xdot}{\dot{x}}
\newcommand{\ydot}{\dot{y}}
\newcommand{\udot}{\dot{u}}
\newcommand{\vdot}{\dot{v}}
\newcommand{\hq}{\hat{q}}
\newcommand{\hp}{\hat{p}}
\newcommand{\N}{\mathcal{N}}
\newcommand{\Nh}{\hat{N}}
\newcommand{\sT}{\mathscr{T}}
\newcommand{\e}{\varepsilon}
\newcommand{\p}{\partial}
\begin{document}

\title[Limit cycle enumeration]{Limit cycle enumeration in random vector fields}
\date{}

\author[E. Lundberg]{Erik Lundberg}

\begin{abstract}
We study the number and distribution of the limit cycles of a planar vector field whose component functions are random polynomials.
We prove a lower bound on the average number of limit cycles when the random polynomials are sampled from the Kostlan-Shub-Smale ensemble.
Investigating a problem introduced by Brudnyi [Annals of Mathematics (2001)] we also consider a special local setting of counting limit cycles near a randomly perturbed center focus, and
when the perturbation has i.i.d. coefficients, we prove a limit law showing that the number of limit cycles situated within a disk of radius less than unity converges almost surely to the number of real zeros of a logarithmically-correlated random univariate power series.
We also consider infinitesimal perturbations where we obtain precise asymptotics on the global average count of limit cycles for a family of models.
The proofs of these results use novel combinations of techniques from dynamical systems and random analytic functions.
\end{abstract}


\maketitle

\section{Introduction}

The second part of Hilbert's sixteenth problem
asks for a study of the number and relative positions of the limit cycles
of a planar polynomial system of ordinary differential equations.
The problem was included in Smale's list of problems for the next century \cite{Smale1998},
where he stated that
``except for the Riemann hypothesis,
this seems to be the most elusive of Hilbert's problems.''
Although far from solved, this problem has attracted a great deal of attention and influenced several developments within the field of dynamical systems.
See \cite{Ilyashenko}, \cite{Li},
\cite{UribeHossein}, \cite{Roussarie}, and \cite{Caubergh} for surveys.

The finiteness of the number of limit cycles for any polynomial system was shown by Ilyashenko \cite{Ilya} and independently \'Ecalle \cite{Ecalle}.
The problem of proving the existence of an upper bound depending only on the degree of the polynomial system remains open;
even in the case of quadratic vector fields no uniform upper bound is known.
Relevant progress in bifurcation theory \cite{IlyaYakPolycycle}, \cite{Kaloshin} established bounds on the number of limit cycles bifurcating from elementary polycycles, addressing a local finiteness problem considered to be of fundamental importance in the global problem.
Several steps of progress 
\cite{Chen},
\cite{Khovanski}, \cite{Varnchenko},
\cite{Petrov}, \cite{IlyaYakoInvent}, \cite{Gavrilov},
\cite{BinNovYak},
\cite{BinDor}
have been made on establishing uniform quantitative bounds for a special  ``infinitesimal'' Hilbert's sixteenth problem that was proposed by Arnold \cite{ArnoldProb}
and which is restricted to near Hamiltonian systems.
Concerning lower bounds, in \cite{ChLl} it is shown that there are degree-$d$ polynomial systems whose number of limit cycles grows as $d^2 \log d$ with $d$ (cf. \cite{HanLi}).
A number of papers consider limit cycle enumeration problems for particular classes of interest, such as Li\'enard systems \cite{DumPanRou}, \cite{Llibre},
quadratic systems \cite{Bautin}, \cite{Gavrilov}, systems on a cylinder \cite{LN} which are 
related to systems with homogeneous nonlinearities \cite{CalRuf},
and systems arising in control theory
\cite{LeoKuz}.

The current work concerns the
following probabilistic perspective on 
enumeration of limit cycles of planar systems associated to polynomial vector fields.

\begin{prob}\label{prob:main}
Study the number
and distribution in the plane (including their relative positions) of the limit cycles of a vector field whose component functions are random polynomials.
\end{prob}

This probabilistic point of view was introduced by A. Brudnyi in \cite{Brudnyi1}, \cite{Brudnyi2}
where he developed powerful general results on the distribution of zeros of analytic functions depending analytically on a multi-variate parameter and used those estimates to establish an upper bound for the number of small amplitude limit cycles near a randomly perturbed center focus.  While the study of random polynomials and random analytic functions has seen several breakthroughs over the past twenty years, there has been a lack of further progress on Problem \ref{prob:main} since the (2001, 2003) work \cite{Brudnyi1, Brudnyi2} mentioned above.  

In the current paper, we establish three new types of results within the setting of Problem \ref{prob:main}.  Most relevant to the previous work \cite{Brudnyi1}, \cite{Brudnyi2}, we establish a probabilistic \emph{limit law} for a perturbative model closely related to the one introduced by Brudnyi, where we show that the limit cycle counting statistic converges almost surely to the number of real zeros of a certain random univariate power series, see Theorem \ref{thm:as} in Section \ref{sec:Brudnyi}.  This is the first instance of a probabilistic limit law in the setting of Problem \ref{prob:main}. A second novel direction introduced in this paper is the probabilistic study of \emph{bifurcating} limit cycles for infinitesimal perturbations where we are able to obtain precise asymptotics which we present in Section \ref{sec:infinitesimalpert} below.
A third novel direction of the current paper is the first probabilistic study of a non-perturbative model where we establish a lower bound on the expected number of limit cycles, see Section \ref{sec:KSS}.


We remark in passing that the \emph{first} part of Hilbert's sixteenth problem concerns the topology of real algebraic manifolds (see the above mentioned survey \cite{Li} that includes discussion of both the first and second part of Hilbert's sixteenth problem). 
This occupies a separate setting from the second part, namely, real algebraic geometry as opposed to dynamical systems. Yet, the current work builds on some of the insights from recent studies on the topology of random real algebraic hypersurfaces 
\cite{NazarovSodin1} \cite{GaWe0}, \cite{sarnak}, \cite{GaWe2}, \cite{LLstatistics}, \cite{GaWe1}, \cite{GaWe3}, \cite{NazarovSodin2}, \cite{SarnakWigman}, \cite{FLL}, \cite{LerarioStecconi}.

\subsection{Limit cycles for the Kostlan-Shub-Smale ensemble}\label{sec:KSS}

Let us begin by considering a non-perturbative problem of estimating the global number of limit cycles when the vector field components are random polynomials
sampled from the so-called Kostlan-Shub-Smale ensemble.
\begin{equation}\label{eq:affine}
p(x,y) = \sum_{0 \leq j + k \leq d } 
a_{j,k}
\sqrt{\frac{d!}{(d-j - k)! j! k!}}  x^{j}  y^{k}, \quad a_{j,k} \sim N(0,1), \text{ i.i.d.}
\end{equation}
Among the Gaussian models of random polynomials, the Kostlan model is distinguished as the unique model that uses the monomials as a basis and is invariant under change of coordinates by orthogonal transformations of projective space $\RR \PP^2$ (there are other Gaussian models with this invariance but their description requires Legendre polynomials).  Moreover, the complex analog of the Kostlan model, obtained by taking independent complex Gaussians $a_{j,k} \sim N_{\CC}(0,1)$, is the only unitarily invariant Gaussian model of complex random polynomials.  For these reasons, the Kostlan-Shub-Smale model has become a model of choice in studies of random multivariate polynomials.

Kostlan \cite{kostlan:93} adapted Kac's univariate method \cite{kac43} to the study of zero sets of multivariate polynomials, 
and Shub and Smale \cite{Bez2} further showed that the 
average number of real solutions to a random system 
of $n$ equations in $n$ unknowns where the polynomials have degrees
$d_1,...,d_n$ equals $\sqrt{d_1 \cdots d_n}$,
which is the square root 
of the maximum possible number of zeros as determined by Bezout's theorem.

In particular, a planar vector field with random polynomial components of degree $d$
has $\sqrt{d^2}=d$ many equilibria on average.
We show that the average number of limit cycles as well grows (at least) linearly in the degree.

\begin{thm}[Lower bound for average number of limit cycles]
\label{thm:main}
Let $p,q$ be random polynomials of degree $d$ sampled independently
from the Kostlan ensemble.
Let $N_d$ denote the number of limit cycles of the vector field 
$$F(x,y) = \binom{p(x,y)}{q(x,y)}.$$
There exists a constant $c_0>0$ such that
\begin{equation}\label{eq:LB}
\EE N_d \geq c_0 \cdot d ,
\end{equation}	
for all $d$.
\end{thm}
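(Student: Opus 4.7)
My plan is to exhibit $\Theta(d)$ pairwise disjoint small disks inside a fixed compact subset $K \subset \RR^2$, in each of which the random vector field $(p,q)$ contains a limit cycle with probability bounded below by a constant independent of $d$. Summing these contributions via linearity of expectation will then produce the claimed lower bound $\EE N_d \geq c_0 \, d$. Concretely, I would pack $K$ with $N(d) = \Theta(d)$ disjoint disks $B_i = B(z_i, R/\sqrt{d})$, and for each center $z_0 = (x_0, y_0) \in K$ rescale
\begin{equation*}
\tilde{p}_d(u,v) := (1+|z_0|^2)^{-d/2}\, p\bigl(x_0 + u/\sqrt{d},\; y_0 + v/\sqrt{d}\bigr),
\end{equation*}
and similarly $\tilde{q}_d$. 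A direct Kostlan covariance computation shows that $(\tilde{p}_d,\tilde{q}_d)$ converges in distribution, in $C^k_{\mathrm{loc}}(\RR^2)$ for every $k$ and uniformly in $z_0 \in K$, to an independent pair of Bargmann--Fock random analytic functions $(P,Q)$. Because multiplication by the positive scalar $(1+|z_0|^2)^{-d/2}$ merely reparametrizes time, the orbits of $(\tilde{p}_d,\tilde{q}_d)$ in the $(u,v)$-disk $D_R$ coincide, up to the linear dilation, with the orbits of $(p,q)$ in $B(z_0,R/\sqrt{d})$.

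The main step is then to prove that the limit vector field $(P,Q)$ admits a hyperbolic limit cycle inside $D_R$ with some positive probability $\alpha > 0$. I would fix a deterministic analytic vector field with a known hyperbolic limit cycle, for instance
\begin{equation*}
F_0(u,v) = \bigl(-v + u(1 - u^2 - v^2),\; u + v(1 - u^2 - v^2)\bigr),
\end{equation*}
whose unit-circle limit cycle is easily verified to be hyperbolic in polar coordinates. The Cameron--Martin space of the Bargmann--Fock field contains all polynomials and is dense in $C^1(D_R)$, so the support theorem for Gaussian measures on separable function spaces yields
\begin{equation*}
\PP\bigl(\|(P,Q) - F_0\|_{C^1(D_R)} < \delta \bigr) > 0 \qquad \text{for every } \delta > 0.
\end{equation*}
Structural stability of a hyperbolic limit cycle, via the implicit function theorem applied to the Poincar\'e first-return map on a transversal, implies that for $\delta$ small enough this event forces a limit cycle to lie inside $D_R$, giving $\alpha > 0$.

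The uniform $C^1_{\mathrm{loc}}$ convergence of the scaling limit then propagates this structurally stable event to the pre-limit: for all sufficiently large $d$ and every $z_0 \in K$,
\begin{equation*}
\PP\bigl((p,q) \text{ has a limit cycle in } B(z_0, R/\sqrt{d})\bigr) \;\geq\; \alpha/2.
\end{equation*}
Since the $B_i$ are pairwise disjoint, any limit cycle is contained in at most one of them, so linearity of expectation yields
\begin{equation*}
\EE N_d \;\geq\; \sum_{i=1}^{N(d)} \PP\bigl((p,q) \text{ has a limit cycle in } B_i\bigr) \;\geq\; \tfrac{\alpha}{2} N(d) \;\geq\; c_0 \cdot d.
\end{equation*}

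The principal obstacle is the barrier argument in the second step: pairing the Gaussian support theorem (used to approximate $F_0$ in a strong norm) with the correct structural-stability statement for a hyperbolic limit cycle. Density of polynomials in $C^1(D_R)$ and their membership in the Cameron--Martin space are standard, but one must take care that the mode of Gaussian approximation genuinely delivers $C^1$-closeness rather than merely $L^2$ or $C^0$ closeness, since only the former suffices to preserve the hyperbolic limit cycle via the Poincar\'e return map. A secondary difficulty is verifying that the scaling-limit convergence in the first step holds in $C^1_{\mathrm{loc}}$ and uniformly in $z_0 \in K$, which is what ensures the probability lower bound is $d$-independent.
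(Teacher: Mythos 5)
Your strategy is viable but genuinely different from the paper's. The paper also localizes to $\Theta(d)$ disjoint regions at scale $d^{-1/2}$, but the local positive-probability event is produced by an explicit \emph{barrier construction}: a deterministic vector field $B_r$ whose homogenized components have unit Fischer norm (hence appear as basis directions with i.i.d.\ Gaussian coefficients in the Kostlan expansion) is shown, with probability bounded below, to dominate the remainder of the field on a shrinking elliptical annulus, making that annulus transverse; Poincar\'e--Bendixson then yields a periodic orbit, which is a.s.\ a limit cycle for generic polynomial fields. No scaling limit, support theorem, or hyperbolicity is invoked, and the argument is quantitative at finite $d$. Your route --- rescale to the Bargmann--Fock limit, use the Gaussian support theorem to land $C^1$-close to a model field with a hyperbolic cycle, and transfer back via structural stability and the Portmanteau theorem --- is softer and more conceptual; it trades the explicit finite-$d$ estimates for known limit theory, and it produces hyperbolic (hence stable or unstable) cycles, but only for $d$ sufficiently large and at the cost of importing the $C^1_{\mathrm{loc}}$ scaling-limit convergence and its uniformity in the base point. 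In spirit the two are cousins: the paper's $B_r$ is precisely an explicit Cameron--Martin element playing the role your support theorem plays abstractly.

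One concrete error needs fixing: the normalization $\tilde p_d(u,v)=(1+|z_0|^2)^{-d/2}\,p(z_0+u/\sqrt d\,,\,y_0+v/\sqrt d)$ does \emph{not} converge to Bargmann--Fock for $z_0\neq 0$. Its variance at $u$ is $\bigl(1+|z_0+u/\sqrt d|^2\bigr)^d/(1+|z_0|^2)^d\approx\exp\bigl(2\sqrt d\,\langle z_0,u\rangle/(1+|z_0|^2)\bigr)$, which blows up in half of the directions. You must normalize by $(1+|z|^2)^{-d/2}$ evaluated at the \emph{moving} point $z=z_0+u/\sqrt d$ (still a positive scalar, so orbits are preserved), and even then the Euclidean rescaling produces at $z_0\neq 0$ an anisotropically stretched Bargmann--Fock field, because central projection distorts the metric away from the tangent point (this is exactly why the paper's annuli become ellipses with eccentricity governed by $a=\sqrt{1+r^2}$). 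The support-theorem and structural-stability steps are robust to this anisotropy, and rotation invariance reduces the uniformity question to the compact parameter $|z_0|$, so the gap is reparable --- but as written the key convergence claim in your first step is false.
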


\begin{remark}
Addressing relative positions of limit cycles, it follows from the method of the proof of Theorem \ref{thm:main}, which localizes the problem to small disjoint (and unnested) annuli,
that the same lower bound holds while restricting to the number $\hat{N}_d$ of \emph{empty} limit cycles (we refer to a limit cycle as ``empty'' if it does not surround any other limit cycle).
Together with the above-mentioned result of Kostlan-Shub-Smale
this determines the growth rate of $\EE \hat{N}_d$ to be linear in $d$; for all sufficiently large $d$ we have
\begin{equation}\label{eq:empty}
c_0 d \leq \EE \hat{N}_d \leq d .
\end{equation}
Indeed, each empty limit cycle contains an equilibrium point in its interior and distinct empty limit cycles have disjoint interiors.
\end{remark}

The estimates \eqref{eq:empty}
suggest the existence of a constant $c>0$ such that
$\EE \hat{N}_d \sim c \cdot d $ as $d \rightarrow \infty$.
We prove an analogous asymptotic result for a related model where the vector field components are random real analytic functions  sampled from 
the Gaussian ensemble induced by the Bargmann-Fock inner product.
This model arises as a rescaling limit of the Kostlan-Shub-Smale model \cite{BeGa}.

\begin{thm}[asymptotic for number of empty limit cycles]\label{thm:law}
Let $f,g$ be random real-analytic functions sampled independently
from the Gaussian space induced by the Bargmann-Fock inner product.
Let $\hat{N}_R$ denote the number of empty limit cycles situated within the disk of radius $R$ of the vector field 
$$F(x,y) = \binom{f(x,y)}{g(x,y)}.$$
There exists a constant $c>0$ such that
\begin{equation}\label{eq:law}
\EE \hat{N}_R \sim c \cdot R^2, \quad \text{as } R \rightarrow \infty.
\end{equation}
\end{thm}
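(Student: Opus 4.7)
The plan is to exploit the translation invariance of the Bargmann--Fock ensemble to turn the asymptotic into the computation of an intensity of a stationary point process in the plane. To each empty limit cycle $\gamma$ of $F$, assign a distinguished interior equilibrium $x(\gamma)$, which exists by Poincaré--Bendixson together with an index computation (ties broken by a fixed measurable rule). Distinct empty limit cycles have disjoint interiors, so the resulting random point set $\Xi = \{x(\gamma)\}$ is well defined and is a measurable functional of $(f,g)$. Translation invariance of the Bargmann--Fock law then makes $\Xi$ stationary, and if $c$ denotes its intensity one has $\EE[\#(\Xi \cap D_R)] = c \pi R^2$, where $D_R$ denotes the disk of radius $R$.

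The second step is to compare $\#(\Xi \cap D_R)$ with $\hat{N}_R$. The discrepancy is bounded by the number of empty limit cycles that meet both $D_R$ and its complement, which is in turn controlled by the number of equilibria of $F$ in a bounded-width annular collar of $\partial D_R$. The Kac--Rice formula applied to $F=(f,g)$ shows the expected number of equilibria in such a collar is $O(R)$, giving $\EE \hat{N}_R = c \pi R^2 + O(R)$, which implies \eqref{eq:law} once $c > 0$ is established.

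Positivity of $c$ is the crux. I would reproduce in the Bargmann--Fock setting the local construction that drives Theorem~\ref{thm:main}: exhibit a fixed annulus $A \subset \RR^2$ and show that, with strictly positive probability, the outer circle of $A$ is crossed strictly inward by $F$, the annulus $A$ contains no equilibria of $F$, and the inner disk contains a single repelling equilibrium. On this event, Poincaré--Bendixson applied to the trapping annulus produces a limit cycle in $A$, and taking the innermost one around the repelling equilibrium yields a limit cycle that is \emph{empty}. Since the event depends only on the values of $(f,g)$ and their first derivatives on a compact set, non-degeneracy of the Bargmann--Fock covariance ensures the event has strictly positive probability, and so $c > 0$.

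The main obstacle will be this last step: producing an annulus event that is simultaneously robust (so Poincaré--Bendixson applies), suitably localized (so it contributes a single point to $\Xi$), and carries provably positive Bargmann--Fock probability. A cleaner alternative, provided the proof of Theorem~\ref{thm:main} already localizes its constructed limit cycles to balls of radius $O(1/\sqrt{d})$ in the Kostlan model, is to invoke the scaling-limit convergence of the Kostlan--Shub--Smale model to Bargmann--Fock \cite{BeGa}: this would transfer the linear-in-$d$ lower bound of Theorem~\ref{thm:main} into a positive intensity for $\Xi$, bypassing the need to reprove a local trapping construction directly in the Bargmann--Fock setting.
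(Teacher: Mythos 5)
Your overall architecture differs from the paper's in an interesting way: where you build a stationary marked point process $\Xi$ of distinguished interior equilibria and read off the quadratic growth from its intensity, the paper instead runs the Nazarov--Sodin integral geometry sandwich on the counting function $\N(x,r)$ of empty limit cycles contained in $\DD_r(x)$ and shows $\EE \hat{N}_R/|\DD_R|$ is Cauchy. Both routes use translation invariance of Bargmann--Fock in an essential way, and both reduce to the same crux: controlling the empty limit cycles that cross the boundary circle. Your first step is fine (disjointness of interiors of empty cycles makes $\Xi$ well defined, a translation-covariant tie-breaking rule makes it stationary, and local finiteness follows since $\#(\Xi\cap \DD_R)$ is dominated by the number of equilibria in $\DD_R$), and your positivity argument is essentially what the paper does as well.

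The genuine gap is in your second step. You bound the discrepancy between $\#(\Xi\cap \DD_R)$ and $\hat{N}_R$ by the number of equilibria of $F$ in a bounded-width collar of $\partial \DD_R$. That bound is false: an empty limit cycle that meets both $\DD_R$ and its complement does contain an equilibrium in its interior, but that equilibrium can lie arbitrarily far from $\partial \DD_R$ (picture a long thin cycle poking out of the disk whose interior equilibrium sits deep inside $\DD_R$, or a large cycle dipping into $\DD_R$ whose equilibrium is far outside). The only a priori bound you get from equilibria is $\#\{\text{equilibria in } \DD_R\} = O(R^2)$, which is not $o(R^2)$ and so does not close the argument. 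The paper's fix, which slots directly into your framework, is to count \emph{tangencies} rather than equilibria: every empty limit cycle crossing $\partial \DD_R$ has an entry point and an exit point on the circle, and the intermediate value theorem applied to $\langle F(x,y), \binom{x}{y}\rangle$ along a boundary arc lying inside the cycle produces a tangency point of $F$ with $\partial\DD_R$ in the cycle's interior; disjointness of interiors makes these tangencies distinct, so the discrepancy is at most $\sT(R)+O(1)$, and a one-dimensional Kac--Rice computation gives $\EE\sT(R) = 2\sqrt{1+R^2} = O(R)$ (Lemma \ref{lemma:tang}). You would need this (or an equivalent control on long cycles) to complete your proof. Your suggested alternative of transferring positivity of $c$ from Theorem \ref{thm:main} via the Kostlan-to-Bargmann--Fock scaling limit is also not reliable as stated, since counts of limit cycles are not continuous functionals of the field and the convergence of expectations does not follow automatically; the direct fixed-radius annulus construction the paper sketches is the safer route.
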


The proof of Theorem \ref{thm:main}
uses transverse annuli and an adaptation of the ``barrier construction'' originally developed by Nazarov and Sodin for the study of nodal sets of random eigenfunctions \cite{NazarovSodin1}.
The proof of Theorem \ref{thm:law}
is based on yet another tool from the study of random nodal sets, the integral geometry sandwich from \cite{NazarovSodin2}, which reduces the current problem to controlling long limit cycles.

It is of interest, but seems very difficult in the above non-perturbative settings, to obtain asymptotic results or even upper bounds for the average total number of limit cycles (without restricting to empty limit cycles).
In the next sections we discuss this direction in special perturbative settings, beginning with the above-mentioned problem studied by Brudnyi \cite{Brudnyi1}.

\subsection{Limit cycles surrounding a perturbed center focus}\label{sec:Brudnyi}

In \cite{Brudnyi1}, Brudnyi considered
the limit cycles situated in the disk $\DD_{1/2}$ of radius $1/2$ centered at the origin for the random vector field
\begin{equation}\label{eq:Brudnyi}
F(x,y) = \binom{y + \e p(x,y)}{-x + \e q(x,y)}
\end{equation}
where 
\be\label{eq:p}
 p(x,y) = \sum_{1 \leq j+k \leq d} a_{j,k} x^j y^k
\ee
and
\be\label{eq:q}
q(x,y) = \sum_{1 \leq j+k \leq d} b_{j,k} x^j y^k
\ee
are random polynomials with the vector of coefficients sampled uniformly from the $d(d+3)$-dimensional Euclidean unit ball $\displaystyle \left\{ \sum_{1 \leq j+k \leq d} (a_{j,k})^2 + (b_{j,k})^2 \leq 1 \right\}$,
and where $\e=\e(d) =\frac{1}{40\pi\sqrt{d}}$.


Using complexification and pluripotential theory, Brudnyi showed \cite{Brudnyi1} that, with $F$ as in \eqref{eq:Brudnyi}, the average number of limit cycles of the system $\binom{\dot{x}}{\dot{y}} = F(x,y)$ residing within the disk $\DD_{1/2}$ is $O((\log d)^2)$,
which was later improved to $O(\log d)$ in \cite{Brudnyi2}.
We conjecture that this can be further improved to $O(1)$ based on the result below
showing almost sure convergence of the limit cycle counting statistic for a slightly modified model,
where the vector of coefficients is sampled uniformly from the cube $[-1,1]^{d(d+3)}$ rather than from the unit ball of dimension $d(d+3)$.
While we suspect sampling from the cube to lead to similar asymptotics as sampling from the unit ball, a natural reason to prefer the cube is that it represents independent sampling of coefficients.
Note that we relax the smallness of the perturbation allowing $\e = \e(d) \rightarrow 0$ at an arbitrary rate as $d \rightarrow \infty$.

\begin{thm}[limit law for the perturbed center focus]\label{thm:as}
Let $p,q$ be random polynomials of degree $d$ with coefficients sampled uniformly and independently from $[-1,1]$, and suppose $\rho < 1$.
Suppose $\e = \e(d) \rightarrow 0$ as $d \rightarrow \infty$.
Let $N_d(\rho)$ denote the number of limit cycles situated within the disk $\DD_{\rho}$ of the vector field
\begin{equation}\label{eq:perturbedcenter}
F(x,y) = \binom{y + \e p(x,y)}{-x + \e q(x,y)}.
\end{equation}
Then $N_d(\rho)$
converges almost surely (as $d \rightarrow \infty$) to
a non-negative random variable $X(\rho)$, satisfying $\EE X(\rho)< \infty$, that counts the number of real zeros in $(0,\rho)$ of a random univariate power series
$\sA_\infty(r)= \displaystyle \sum_{m=0}^\infty \zeta_m r^{2m+2},$
whose coefficients $\zeta_m$ are independent random variables with  mean zero 
and variance satisfying $\EE \zeta_m^2 \sim 8\pi/m$ as $m \rightarrow \infty$.
\end{thm}

\begin{remark}
As we show in the proof of Theorem \ref{thm:as}, the limiting random variable $N_d(\rho)$ counts the number of \emph{bifurcating limit cycles} of the limiting vector field whose components are the power series obtained by letting the degree $d$ tend to infinity. 
To elaborate, the proof of Theorem \ref{thm:as} shows that the original limit can essentially be replaced with the iterated limit, taking $d \rightarrow \infty$ before $\e \rightarrow 0$ (and this is why the outcome is independent of the particular rate at which $\e(d) \rightarrow 0$ as $d \rightarrow \infty$).  This ultimately reduces the problem to studying an \emph{infinitesimal} perturbation involving random bivariate power series, and classical perturbation theory of the Poincar\'e first return map then gives rise to the random univariate power series $\sA_\infty(r)$ referenced in the statement of the theorem.
\end{remark}

The detailed description of the random series $\sA_\infty(r)$ is given in  \eqref{eq:Ainfinity} below.  This type of random series, where the variance of the $m$th coefficient is asymptotically proportional to $m^{-1}$, is sometimes referred to as being \emph{logarithmically-correlated} since the two-point correlation function $\EE \sA_\infty(r) \sA_\infty(t)$ approximately agrees (in its tail) with the logarithmic series
 $$-8\pi (rt)^2 \log(1-(rt)^2) = \sum_{m=1}^\infty \frac{8\pi}{m} (rt)^{2m+2}.$$


We also note that $\sA_\infty$ falls into the class of random power series with coefficients of power law variance, a general class that has been considered by H. Flasche and Z. Kabluchko in \cite{Kabluchko}, where the authors investigate the asymptotic behavior of the expected number of zeros in the interval $(0,\rho)$ as $\rho \rightarrow 1^-$.  However, the particular power law rate $m^{-1}$ of the logarithmically-correlated case falls just at the edge of the cases that were studied in \cite{Kabluchko}.  We encounter a similar situation for random polynomials in Section \ref{sec:infinitesimalpert} below (see the comments after Theorem \ref{thm:powerlaw}), but the polynomial case has now been addressed in \cite{KLN} (see the statement below added in press).

It is of interest to study the behavior of $X(\rho)$ as the radius $\rho$ approaches unity (as in the above-mentioned work \cite{Kabluchko}), since $\rho=1$ is almost surely the radius of convergence of the series $\sA_\infty$ (and hence it is natural to expect $\EE X(\rho)$ to diverge as $\rho \rightarrow 1^-$).
We conjecture that the precise asymptotic behavior of $\EE X(\rho)$
is given by \be\label{eq:conj}
\displaystyle \EE X(\rho) \sim \frac{1}{\pi} \sqrt{ - \log (1-\sqrt{\rho}) }, \quad \text{as } \rho \rightarrow 1^-.
\ee
This conjecture is supported by the following heuristic which rests on an assumption that Tao and Vu's universality method (establishing a ``replacement principle'') \cite{TaoVu} can be applied in the current setting: If the random series (see the function $\sA_\infty$ defined in the proof of Theorem \ref{thm:as} below) whose zeros are counted by $X(\rho)$ is replaced by an analogous Gaussian series (replacing each coefficient by a Gaussian random variable of the same variance)
then the resulting Gaussian series is closely related to the one considered in \cite[Thm. 3.4]{Flasche} where a precise asymptotic has been derived.
The validity of replacing a random series with a Gaussian one as $\rho \rightarrow 1^-$ seems plausible considering previous success in applying the universality method in related settings \cite{TaoVu}, \cite{DoVu}, \cite{Kabluchko}, \cite{KLN}.  

\begin{remark}
A normal distribution law was established in \cite[Thm. 1.7]{Brudnyi1} for the number of zeros of certain related families of analytic functions, but it follows from Theorem \ref{thm:as}, with some attention to the details of its proof,
that a normal distribution limit law does not hold for $N_d(\rho)$ when $\rho<1$. Indeed, $N_d(\rho)$ obeys a discrete limit law and converges, without rescaling,
to a nondegenerate random variable $X(\rho)$ with discrete support  $\{0,1,2,...\}$.
\end{remark}

The underlying reason for the almost sure convergence of the number of limit cycles is that, within this particular model of random polynomials, $p,q$ tend toward random bivariate power series convergent in the unit bidisk $\DD \times \DD \subset \CC^2$, in particular, convergent in the real disk $\DD_\rho$ for any $\rho<1$.

It is desirable to count limit cycles beyond this domain of convergence, i.e., in a disk with radius $\rho>1$.
Under an assumption that $\e(d)$ shrinks sufficiently fast, the methods of \cite{Brudnyi1} can be utilized to obtain the estimate $O((\log d)^2)$.  For convenience, to state this result we return to the context of Brudnyi's model,
sampling the vector of coefficients from the unit ball instead of a cube.

\begin{thm}\label{thm:addendum}
Let $p,q$ be random polynomials with the vector of coefficients sampled uniformly from the Euclidean unit ball, and suppose $\rho > 1$.
Suppose $0<\e = \e(d) \leq \frac{(2 \rho)^{-d(d+3)}}{40\pi \sqrt{d}} $.
The expectation of the number of limit cycles of the vector field
\begin{equation}
F(x,y) = \binom{y + \e p(x,y)}{-x + \e q(x,y)}
\end{equation}
situated within the disk $\DD_{\rho}$
is bounded from above by a constant times $(\log d)^2$.
\end{thm}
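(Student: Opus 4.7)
The plan is to carry out the complex-analytic argument of Brudnyi \cite{Brudnyi1} in the disk $\DD_\rho$ with $\rho>1$, relying on the super-exponential smallness of $\e(d)$ to offset the exponential-in-$d$ growth of $p,q$ at radii beyond $1$. The first step is to verify that the Poincar\'e first-return displacement function $D(r,\e) = P(r,\e)-r$ extends holomorphically to a complex disk $\DD_R$ with $R := 2\rho$. Expanding formally in $\e$, $D(r,\e) = \sum_{k\ge 1} \e^k M_k(r)$, each Melnikov polynomial $M_k$ has degree at most $k(d+1)$ in $r$ and is $k$-multilinear in the random coefficients $(a_{j,k},b_{j,k})$. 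A Cauchy--Schwarz estimate with the unit-ball normalization yields $\sup_{\DD_R}|M_k| \le (CdR^{d+1})^k$ for an absolute constant $C$, and the hypothesis $\e \le (2\rho)^{-d(d+3)}/(40\pi\sqrt d)$ makes the resulting geometric series converge rapidly. In particular the rescaled function $\widetilde D := D/\e = M_1 + O(\e d R^{d+1})$ is holomorphic on $\DD_R$ with $\sup_{\DD_R}|\widetilde D| \le 2 d R^{d+1}$, and limit cycles in $\DD_\rho$ are in bijection with real zeros of $\widetilde D$ in $(0,\rho]$.

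Next I would apply the Jensen/Bernstein zero-counting inequality on the nested pair $\DD_\rho \subset \DD_R$:
\[
N_d(\rho) \;\le\; \frac{1}{\log(R/\rho)}\,\log\frac{\sup_{\DD_R}|\widetilde D|}{|\widetilde D(r_0)|}
\]
for any real $r_0 \in (0,\rho)$ at which $\widetilde D$ does not vanish. Since $\widetilde D(r_0) = M_1(r_0)(1+O(\e))$, the task reduces to bounding
\[
\EE\!\left[\log\frac{\sup_{\DD_R}|M_1|}{|M_1(r_0)|}\right]
\]
by $O((\log d)^2)$ for a judicious choice of $r_0$ (morally near $r_0 = 1$, where the polynomial-in-$r_0$ contributions to numerator and denominator roughly balance). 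I would invoke Brudnyi's pluripotential-theoretic estimate from \cite{Brudnyi1}, which controls such an expectation via harmonic-measure estimates on the distribution of the linear functional $M_1(r_0) = \langle c(r_0),(a,b)\rangle$ on the $d(d+3)$-dimensional coefficient ball, and which is the source of the $(\log d)^2$ factor.

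The hard part will be transferring Brudnyi's probabilistic estimate to the present setting: in \cite{Brudnyi1} the disk is $\DD_{1/2}$ and $\|c(r_0)\|$ varies in a controlled way, whereas here the direction vector $c(r_0)$ can have widely different norms for $r_0$ close to $\rho>1$ versus $r_0$ close to $0$. The role of the super-exponentially small $\e$ is precisely to guarantee the first step with a sup bound on $\widetilde D$ that is polynomial in $d$ uniformly in $\rho$; once that is in place, a careful choice of $r_0$ near $r_0 = 1$, together with Brudnyi's argument, yields $\EE N_d(\rho) = O((\log d)^2)$. The remaining work is bookkeeping on $\rho$-dependent constants, absorbed into the final implicit constant.
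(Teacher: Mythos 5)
Your overall strategy (complexify, use the super-exponential smallness of $\e$ to keep the Poincar\'e displacement analytic and controlled on a complex disk larger than $\DD_\rho$, then count zeros) is the right family of ideas, but the concrete zero-counting step you write down cannot produce the stated bound, and the step that could produce it is exactly the one you defer. The Jensen/Bernstein inequality
\[
N_d(\rho) \;\le\; \frac{1}{\log(R/\rho)}\,\log\frac{\sup_{\DD_R}|\widetilde D|}{|\widetilde D(r_0)|}
\]
has a numerator that is \emph{deterministically} of size $\log\sup_{\DD_R}|\widetilde D| \asymp d\log R$ (since $\widetilde D \approx M_1$ is a polynomial of degree about $d$ in $r$ with coefficients linear in the unit-ball parameters), while $\EE\bigl[-\log|M_1(r_0)|\bigr]$ is at best $O(\log d)$ for any fixed $r_0$. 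So this route yields $\EE N_d(\rho) = O(d)$, not $O((\log d)^2)$. The $(\log d)^2$ in Brudnyi's work does not come from a one-point Jensen estimate on the function of $r$ alone; it comes from his Theorem~2.3, a pluripotential-theoretic bound on the expected number of zeros of a family $h_\mu(r)$ that depends \emph{analytically on the parameter} $\mu$ ranging over a ball in $\CC^{d(d+3)}$, and it is the dimension of that parameter ball that enters logarithmically. You name this input but then write that ``the hard part will be transferring Brudnyi's probabilistic estimate to the present setting,'' which is precisely the content of the theorem; as written, the proposal has a genuine gap at its core. (A secondary, more technical issue: your bound $\sup_{\DD_R}|M_k|\le (CdR^{d+1})^k$ for the higher-order Melnikov terms is asserted rather than derived, and these terms are iterated integrals, not simply $k$-multilinear expressions with such clean estimates.)

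The paper avoids all of this with a much shorter reduction: rescale $u=x/(2\rho)$, $v=y/(2\rho)$, so that $\DD_\rho$ becomes $\DD_{1/2}$ and the system keeps the form of Brudnyi's model with $\hat p(u,v)=2\rho\,p(2\rho u,2\rho v)$, $\hat q(u,v)=2\rho\, q(2\rho u,2\rho v)$. Passing to polar coordinates and complexifying $r$, the only thing to check is $\sup_U|H(r,\phi)|\le 3/(40\pi)$, and this is exactly where the hypothesis $\e\le (2\rho)^{-d(d+3)}/(40\pi\sqrt d)$ absorbs the $(2\rho)^d\sqrt d$ growth of the rescaled polynomials. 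Once that hypothesis of Brudnyi's Proposition~3.1 is verified, his functions $g_\mu$, $h_\mu$ are analytic in $r\in\DD_{3/4}$ and in $\mu$ on the requisite complex parameter ball, and his Theorem~2.3 applies verbatim to give $O((\log d)^2)$. If you want to salvage your write-up, replace the Jensen step by this rescaling-plus-citation, or else genuinely reprove the analytic-family zero-counting estimate --- but do not expect the single-variable Jensen quotient to do that work.
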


\subsection{Infinitesimal perturbations}\label{sec:infinitesimalpert}

Finally, let us consider
enumeration of limit cycles for a random system of the form \eqref{eq:Brudnyi}
while first taking the limit $\e \rightarrow 0$ and then $d \rightarrow \infty$. 
This is a more tractable problem of enumerating the limit cycles that arise when the perturbation is \emph{infinitesimal}.
For generic $p,q$, this problem reduces to studying the zeros of the first Melnikov function (also known as the Poincare-Pontryagin-Melnikov function).

We first consider the case when $p,q$ are
sampled from the Kostlan-Shub-Smale model,
where we obtain the following precise asymptotic valid for any radius $\rho>0$.

\begin{thm}[square root law for Kostlan perturbations]\label{thm:sqrt}
Let $p,q$ be random polynomials of degree $d$ sampled independently from the Kostlan ensemble.
For $\rho>0$ let $N_{d,\e}(\rho)$ denote the number of limit cycles situated in the disk $\DD_\rho$ of the vector field
\begin{equation}
F(x,y) = \binom{y+\e p(x,y)}{-x + \e q(x,y)}.
\end{equation}
Then as $\e \rightarrow 0$, $N_{d,\e}(\rho)$ converges almost surely to a random variable $N_d(\rho)$ whose expectation satisfies the following asymptotic in $d$.
\begin{equation}\label{eq:arctan}
\EE N_d(\rho) \sim \frac{\arctan \rho}{\pi}\sqrt{d} , \quad \text{as } d \rightarrow \infty.
\end{equation}
\end{thm}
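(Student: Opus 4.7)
The plan is to reduce the problem via Poincar\'e-Pontryagin-Melnikov theory to counting zeros of an explicit random polynomial, then to apply the Kac-Rice formula with Laplace-method asymptotics. The unperturbed system $\dot x = y$, $\dot y = -x$ has Hamiltonian $H = (x^2+y^2)/2$ whose period annulus foliates $\RR^2 \setminus \{0\}$ by circles, and the first Melnikov function along $x^2+y^2=r^2$ is
\begin{equation}
M(r) := \oint_{x^2+y^2=r^2} p\,dy - q\,dx = r\int_0^{2\pi}\bigl[p(r\cos\theta,r\sin\theta)\cos\theta + q(r\cos\theta,r\sin\theta)\sin\theta\bigr]d\theta.
\end{equation}
For fixed realizations of the Kostlan polynomials $p,q$, $M$ is a nontrivial polynomial in $r$ whose zeros in $(0,\rho]$ are almost surely simple, so the classical correspondence (implicit function theorem applied to the displacement function of the perturbed Poincar\'e return map) gives the almost sure convergence $N_{d,\e}(\rho) \to N_d(\rho) := \#\{r \in (0,\rho] : M(r) = 0\}$ as $\e \to 0$.

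Next I would compute the covariance kernel. Using independence of $p,q$, the Kostlan reproducing kernel $\EE[p(x,y)p(x',y')] = (1+xx'+yy')^d$, and the substitution $\phi = \theta-\theta'$ in the resulting double integral, one obtains
\begin{equation}
K(r,r') := \EE[M(r)M(r')] = 2\pi\, rr' \int_0^{2\pi}(1+rr'\cos\phi)^d \cos\phi\, d\phi =: K_0(rr').
\end{equation}
The crucial structural feature is that $K$ depends on $(r,r')$ only through $s = rr'$, which reduces the Kac-Rice density for zeros of a smooth Gaussian process to a single-variable expression:
$$\pi^2 \rho_M(r)^2 = \partial_r\partial_{r'}\log K(r,r')\big|_{r'=r} = (\log K_0)'(s) + s(\log K_0)''(s)\big|_{s=r^2}.$$

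Then I would extract large-$d$ asymptotics of $K_0$ by Laplace's method. For $ds \gg 1$, the integrand $(1+s\cos\phi)^d$ is sharply peaked at $\phi = 0$ with Gaussian width of order $\sqrt{(1+s)/(ds)}$, yielding $K_0(s) \sim 2\pi\sqrt{2\pi s(1+s)/d}\,(1+s)^d$ and hence $\log K_0(s) = d\log(1+s) + \tfrac{1}{2}\log s + O(\log d)$. Differentiating, the singular $1/s$ contributions cancel in the Kac-Rice formula above and one finds
$$(\log K_0)'(s) + s(\log K_0)''(s) = \frac{d}{(1+s)^2} + O(1),$$
so $\pi \rho_M(r) \sim \sqrt{d}/(1+r^2)$ uniformly on $[\delta/\sqrt d, \rho]$. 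Integration against $\int_0^\rho dr/(1+r^2) = \arctan\rho$ then produces the main term $\sqrt d \cdot \arctan\rho/\pi$. The boundary contribution from $r \in [0,1/\sqrt d]$ is $O(1)$, seen via the direct power-series expansion $K_0(s) = 2\pi^2 d s^2 + O(d^3 s^4)$ near $s = 0$, which gives $\rho_M(r) = O(dr)$ in that regime.

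The main obstacle is uniformity of the Laplace asymptotic across the crossover scale $s \sim 1/d$, where the integrand is not yet sharply concentrated; the cleanest strategy is to split $[0,\rho]$ into $[0,1/\sqrt d]$ and $[1/\sqrt d, \rho]$ and handle each regime separately with explicit remainder bounds before recombining. A secondary point is the almost sure convergence of $N_{d,\e}(\rho)$, which requires excluding the null events that $M(\rho) = 0$ or that some zero of $M$ in $(0,\rho)$ has multiplicity $\geq 2$; both are ruled out a.s. by nondegeneracy of the Kostlan coefficient vector.
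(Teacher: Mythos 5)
Your proposal is correct and follows essentially the same route as the paper: reduction to zeros of the first Melnikov function via Poincar\'e--Pontryagin--Melnikov theory, the Edelman--Kostlan form of the Kac--Rice formula applied to the covariance kernel $2\pi rt\int_0^{2\pi}(1+rt\cos u)^d\cos u\,du$, and Laplace's method yielding $\partial_r\partial_t\log\sK|_{r=t=\tau}\sim d/(1+\tau^2)^2$. The only difference is technical: where you split $[0,\rho]$ at $r\sim d^{-1/2}$ and bound the near-origin density by $O(dr)$ directly, the paper obtains uniformity on all of $[0,\rho]$ by noting that $d^{-1}\log\sK$ converges to $\log(1+rt)$ uniformly on compact sets in a complex neighborhood and invoking Cauchy estimates to differentiate the asymptotic.
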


The proof of this result identifies the $\e \rightarrow 0$ limit as the number of zeros of a Poincar\'e-Pontryagin-Melnikov integral (the first Melnikov function associated to the perturbation) which is a random Gaussian function in this setting. We apply the Kac-Rice formula to determine the average number of zeros, and we use Laplace's method in the asymptotic analysis as $d \rightarrow \infty$. It is worth noting that this technique extends more generally to the study of randomly perturbed Hamiltonian systems, see Lemmas \ref{lemma:Nd}, \ref{lemma:EK}, \ref{lemma:twopt}, and Section \ref{sec:abelian}.

We next consider a family of models where $p$ and $q$ have independent coefficients with mean zero and variances that have a power law relationship to the degree of the associated monomial.  We collect the variances as deterministic weights, $c_m$ depending on the degree $m$. Namely,
\be\label{eq:pweight}
 p(x,y) = \sum_{m=1}^d \sum_{j+k = m} c_m a_{j,k} x^j y^k,
\ee
and
\be\label{eq:qweight}
q(x,y) = \sum_{m=1}^d \sum_{j+k = m} c_m b_{j,k} x^j y^k,
\ee
with 
\be\label{eq:powerlawvar}
c_m^2 \sim m^\gamma, \quad \text{as } m \rightarrow \infty, \quad \gamma > 0
\ee
and $a_{j,k}$, $b_{j,k}$ are i.i.d. with mean zero, unit variance, and finite moments.

\begin{thm}[random coefficients with power law variance]\label{thm:powerlaw}
Let $p,q$ be random polynomials of degree $d$
as in \eqref{eq:pweight}, \eqref{eq:qweight} with independent coefficients having power law variance as in \eqref{eq:powerlawvar} with $\gamma>0$.
Let $N_{d,\e}$ denote the number of limit cycles
(throughout the entire plane) of the vector field
\begin{equation}
F(x,y) = \binom{y+\e p(x,y)}{-x + \e q(x,y)}.
\end{equation}
Then as $\e \rightarrow 0$, $N_{d,\e}$ converges almost surely to a random variable $N_d$ whose expectation satisfies the following asymptotic in $d$.
\be\label{eq:gammapos}
\EE N_d \sim \frac{1+\sqrt{\gamma}}{2\pi} \log d, \quad \text{as } d \rightarrow \infty.
\ee
\end{thm}

The proof uses Do, Nguyen, and Vu's results \cite{DoVu},
which concern real zeros of univariate random polynomials with coefficients of polynomial growth.  Unfortunately, this method does not extend to the case where $p,q$ have i.i.d. coefficients, i.e., when $\gamma=0$. We leave this case as an open problem that motivates advancing the theory of univariate random polynomials in a particular direction.

\subsection*{Added in press}
After a preprint of the current paper was posted on arxiv, M. Krishnapur, O. Nguyen, and the author have addressed in \cite{KLN} the above-stated open problem of determining asymptotics for the case when $\gamma=0$ (and more generally for $\gamma \leq 0$).
It follows from their results that for any $\gamma \leq 0$ the average number of bifurcating limit cycles satisfies
$\EE N_d \sim \frac{1}{2\pi} \log d$ as $d \rightarrow \infty.$
While this outcome joins up continuously (in $\gamma$) with the result \eqref{eq:gammapos}, we stress that there is a discontinuity when considering the bifurcating limit cycles restricted to the unit disk.  For $\gamma>0$ the growth of such is logarithmic, whereas for $\gamma=0$ the growth is $o(\log d)$, and for $\gamma<0$ the number of bifurcating limit cycles restricted to the unit disk is $O(1)$.  Hence, the case $\gamma=0$ should be considered as critical, see \cite{KLN}.


\subsection*{Outline of the paper}
In Section \ref{sec:prelim},
we review some preliminaries: 
the construction and basic properties of the Kostlan-Shub-Smale ensemble, the notion of a transverse annulus from dynamical systems, the Poincar\'e-Pontryagin-Melnikov integral from perturbation theory of Hamiltonian systems, and the Kac-Rice formula for the average number of real zeros of random functions.
In Section \ref{sec:main} we present the proofs of Theorems \ref{thm:main} and \ref{thm:law}; the anonymous referee has pointed out that the proof of Theorem \ref{sec:main} can be greatly simplified using results of A. Lerario and M. Stecconi \cite{LerarioStecconi} that provide a high ground point of view; details for this alternative approach are presented in Remark \ref{rmk:PropReferee} after we present our comparatively pedestrian proof in its original form (which we have retained since it is rather elementary and self-contained and has the benefit that the estimates involved could readily be made quantitative).
In Section \ref{sec:Brudnyi}
we present the proofs of Theorems \ref{thm:as}
and \ref{thm:addendum}.
In Section \ref{sec:infinitesimal}
we present the proofs of Theorems
and \ref{thm:sqrt} and \ref{thm:powerlaw}.  We conclude with some discussion of future directions and open problems in Section \ref{sec:concl}.

\subsection*{Acknowledgements}
The author thanks
Zakhar Kabluchko for
directing his attention toward
the above mentioned result from
Hendrik Flasche's dissertation \cite{Flasche}.
The author is also grateful to the anonymous referee for a thorough reading and for many insightful comments that improved the paper.
The author acknowledges support from the Simons Foundation (grant 712397).

\section{Preliminaries}\label{sec:prelim}

In this section we review
some basics of Gaussian random polynomials
and dynamical systems
that will be needed.

\subsection{The Kostlan-Shub-Smale ensemble of random polynomials}

Let $\mP_d$ denote the space of polynomials
of degree at most $d$ in two variables,
and recall that there is a natural
isomorphism (through homogenization) between $\mP_d$ and the space $\mH_d$ of homogeneous polynomials of degree $d$ in three variables.

A Gaussian ensemble can be specified by choosing a scalar product on 
$\mH_d$ in which case $f$ is sampled according to:
$$\text{Probability} (f\in A) = \frac{1}{v_{n,d}}\int_A e^{-\frac{\|f \|^2}{2}} dV(f), $$
where $v_{n,d}$  is the normalizing constant that makes this a probability density function,
and $dV$ is the volume form 
induced by the scalar product. 
The \emph{Kostlan-Shub-Smale ensemble}
(which we may simply refer to as the ``Kostlan ensemble''),
results from choosing as a scalar product 
the \emph{Fischer product}\footnote{This scalar product also goes by many other names,
such as the ``Bombieri product'' \cite[p. 122]{KhLu}.} defined as:
 $$\langle f, g \rangle_F = \frac{1}{d! \pi^{3}}\int_{\CC^{3}} f(x,y,z) \overline{g(x,y,z)} e^{-|(x,y,z)|^2} dx dy dz.$$
The monomials are orthogonal with respect to the Fischer product, and the weighted monomials $\binom{d}{\alpha}^{1/2}x^{\alpha_1}y^{\alpha_2}z^{\alpha_3}$ form an orthonormal basis.
Consequently, the following expression relates the Fischer norm of $f=\sum_{|\alpha|=d}f_\alpha x^{\alpha_1}y^{\alpha_2}z^{\alpha_3}$ with its coefficients in the monomial basis (see \cite[Equation (10)]{NewmanShapiro}):
\be
\|f\|_F=\left(\sum_{|\alpha|=d}|f_\alpha|^2 \binom{d}{\alpha}^{-1} \right)^{\frac{1}{2}}.\ee
Here we are using multi-index notation $\alpha=(\alpha_1,\alpha_2,\alpha_3)$,
with $|\alpha|:=\alpha_1+\alpha_2+\alpha_3$,
and $\binom{d}{\alpha} := \frac{d!}{\alpha_1! \alpha_2! \alpha_3!}$.

Having chosen a scalar product, 
we can build the random polynomial
$f$ as a linear combination, with independent Gaussian coefficients, 
using an orthonormal basis for the associated scalar product.
The weighted monomials $\sqrt{\binom{d}{\alpha}} x^{\alpha_1}y^{\alpha_2}z^{\alpha_3} $ form an orthonormal basis for the Fischer product. 
Thus, sampling $f$ from the Kostlan model  (here again $\alpha=(\alpha_1,\alpha_2,\alpha_3)$ is a multi-index),
we have
$$f(x,y,z)=\sum_{|\alpha|=d}\xi_\alpha \sqrt{\binom{d}{\alpha}}x^{\alpha_1}y^{\alpha_2}z^{\alpha_3}, \quad \xi_\alpha\sim N\left(0,1 \right), \text{ i.i.d.}$$ 

Restricting to the affine plane $z=1$,
and setting $p(x,y,1)=f(x,y,z)$, 
we arrive at \eqref{eq:affine}.

\begin{remark}\label{rmk:basis}
The monomial basis is the natural basis for this model, but we are free to write the expansion with i.i.d. coefficients in front of another basis as long as it is orthonormal with respect to the Fischer product.  This fact will be used in the proof of Theorem \ref{thm:main}.
\end{remark}

Among Gaussian models built using the monomials as an orthogonal basis, the Kostlan ensemble has the distinguished property of being the unique Gaussian ensemble that is rotationally invariant
(i.e., invariant under any orthogonal transformation of projective space).

The two-point correlation function (or covariance kernel) of $p$, defined as $K(v_1,v_2) := \EE p(v_1)p(v_2)$, where
$v_1=(x_1,y_1), v_2=(x_2,y_2)$,
satisfies
\be\label{eq:2ptK}
K(v_1,v_2) = (1+x_1x_2 + y_1 y_2)^d,
\ee
which can be seen by applying the multinomial formula after using $\EE \xi_\alpha \xi_\beta = \delta_{\alpha,\beta}$ to obtain
\be 
\EE p(v_1) p(v_2) = \sum_{|\alpha|=d} \binom{d}{\alpha} (x_1 x_2)^{\alpha_1}(y_1 y_2)^{\alpha_2}.
\ee

The homogeneous random polynomial $f$ is distributed the same as if composed with
an orthogonal transformation $T$ of projective space.
The random polynomial $p$ is not invariant under under the induced action on affine space, but if we multiply it by the factor 
$(1+x^2 + y^2)^{-d/2}$ then it is. We state this as a remark that will be used later.

\begin{remark}\label{rmk:invariant}
For $p \in \mP_d$ a random Kostlan polynomial of degree $d$,
the random function $r(x,y) = (1+x^2 + y^2)^{-d/2} p(x,y)$ is invariant under the action $r \mapsto s_T \cdot r \circ \psi \circ T \circ \psi^{-1}$ on affine space induced by an arbitrary orthogonal transformation $T \in SO(3)$ of projective space, where $\psi:S^2 \rightarrow \RR^2$
denotes central projection, and $s_T \equiv 1$ in the case $\deg p$ is even, and $s_T(x,y) = \sign (\psi\circ T \circ \psi^{-1})(x,y)$ in the case $\deg p$ is odd. 
Here we are defining $\psi^{-1}(x,y)$ by assigning a value on the upper hemisphere (the map $\psi:S^2 \rightarrow \RR^2$ is two-to-one). The factor $s_T$ corrects for the odd symmetry at antipodal points (for odd degree polynomials, even degree polynomials need no correction) and ensures that point evaluation of $s_T \cdot r \circ \psi \circ T \circ \psi^{-1}$ at $(x,y)$ is equivalent to $p_h \circ T$ evaluated at $\psi^{-1}(x,y)$, where $p_h$ denotes homogenization of $p$ defined as $p_h(x,y,z) = z^d p(\frac{x}{z},\frac{y}{z})$.  In the proof of Theorem \ref{thm:main}, we will restrict to points $(x,y)$ in the unit disk and only consider transformations $\psi \circ T \circ \psi^{-1}$ mapping the origin to a point in the unit disk.  In that setting we will always have $s_T = 1$, so this factor can be ignored.
\end{remark}


\subsection{Transverse annuli}
In the study of dynamical systems,
a fundamental and widely used notion is that of a ``trapping region''.
In particular, a so-called \emph{transverse annulus} is useful in implementations of the Poincare-Bendixson Theorem.

\begin{definition}
	Suppose $F$ is a planar vector field.
	An annular region $A$ with $C^1$-smooth boundary
	is called a \emph{transverse annulus} for $F$ if
	\begin{itemize}
	\item[1.] $F$ is transverse to the boundary of $A$ with
	$F$ pointing inward on both boundary components
	or outward on both boundary components, and
	\item[2.] $F$ has no equilibria in $A$.
	\end{itemize}
\end{definition}

An application of the Poincare-Bendixson Theorem \cite{Gu} shows that presence of a transverse annulus forces the occurrence of a periodic orbit, and with the additional assumption that $F$ is polynomial (or even analytic) such periodic orbits must be limit cycles.  This gives the following result that allows transverse annuli to be used to estimate the number of limit cycles.

\begin{prop}\label{prop:transann}
	Suppose $F$ is a planar vector field with polynomial (or more generally real-analytic) components.
	Any transverse annulus for $F$ contains at least one limit cycle of $F$.
\end{prop}

\begin{proof}
Let $A$ be a transverse annulus for $F$. Without loss of generality, assume $F$ points inward along both boundary components of $A$ (in the case $F$ points outward, one considers the backward trajectory in what follows).  Fixing an initial condition $(x(0),y(0))=(x_0,y_0)$ on the boundary of $A$, the forward trajectory $\{(x(t),y(t))\}_{t \geq 0}$ is contained in $A$ and, by the Poincar\'e-Bendixson Theorem, approaches a periodic orbit $\Gamma$.  We claim that this periodic orbit is a limit cycle, i.e., in a sufficiently small neighborhood of $\Gamma$ there are no additional periodic orbits. Indeed, suppose for the sake of contradiction that there are periodic orbits arbitrarily nearby.  Fixing a transverse segment, consider the Poincar\'e first return map which is locally defined and analytic \cite[Sec. 3.4]{Perko}. Subtracting the identity map gives an analytic function that must vanish identically since it has an accumulation of zeros (corresponding to the accumulation of periodic orbits).  This contradicts the fact that $\Gamma$ has a nonperiodic trajectory (namely, the one with initial condition $(x_0,y_0)$) converging to it from one side.
\end{proof}


\subsection{The Poincar\'e-Pontryagin-Melnikov integral}

Let us consider the perturbed Hamiltonian system
\be\label{eq:Hpert}
\begin{cases}
\dot{x} = \partial_y H(x,y) + \e p(x,y) \\
\dot{y} = -\partial_x H(x,y) + \e q(x,y)
\end{cases},
\ee
with $H,p,q$ polynomials.
For $\e=0$
the trajectories of \eqref{eq:Hpert}
follow (connected components of) the level curves of $H$.
For $\e>0$ small, the limit cycles of the system can be studied (in non-degenerate cases)
using the Poincar\'e-Pontryagin-Melnikov integral (first Melnikov function) given by
\be\label{eq:Abel}
\sA(h) = \int_{C_h} p \, dy - q \, dx,
\ee
where $C_h$ denotes a connected component of the level set $\{ (x,y): H(x,y) = h \}$.

The following classical result is fundamental in the study of limit cycles of perturbed Hamiltonian systems, see \cite[Sec. 2.1 of Part II]{ChLi} and \cite[Sec. 26]{IlyashenkoBook}.

\begin{thm}\label{thm:PPM}
Let $\sP_{\e}(h)$ be the Poincar\'e
first return map of the system \eqref{eq:Hpert} defined on some
segment transversal to the level curves of $H$, where $h \in (a,b)$ marks the value taken by $H$. 
Then, $\sP_\e(h) = h + \e \sA(h)  + \e^2 E(h, \e)$, as $\e \rightarrow 0$,
where $\sA$ denotes the integral \eqref{eq:Abel} and where $E(h,\e)$ is analytic and uniformly bounded for $\e$ sufficiently small and for $h$ in a compact neighbourhood of $(a,b)$.
\end{thm}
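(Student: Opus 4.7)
The plan is to express the first return map through the increment of the Hamiltonian along trajectories of the perturbed flow and then Taylor-expand in $\e$. Fix a compact subinterval $[a',b'] \subset (a,b)$ and a transversal segment $\Sigma$ parametrized by $h = H|_{\Sigma}$, on which each level curve $C_h$ is a smooth periodic orbit of the unperturbed flow with period $T_0(h)$. Since the right-hand side of \eqref{eq:Hpert} is polynomial, hence real-analytic jointly in $(x,y,\e)$, the classical theorem on analytic dependence of ODE solutions on parameters combined with the implicit function theorem applied to the return condition (transversality of the unperturbed flow to $\Sigma$ persists for small $\e$ by continuity) yields a return time $T_\e(h)$ and a landing point on $\Sigma$ depending jointly analytically on $(h,\e)$ in some fixed neighborhood of $[a',b'] \times \{0\}$.

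The next step is the key identity: along any trajectory of \eqref{eq:Hpert},
\begin{equation*}
\frac{d}{dt} H(x(t),y(t)) = H_x(H_y + \e p) + H_y(-H_x + \e q) = \e\,(pH_x + qH_y).
\end{equation*}
Integrating from $0$ to $T_\e(h)$ gives
\begin{equation*}
\sP_\e(h) - h = \e\,R(h,\e), \qquad R(h,\e) := \int_0^{T_\e(h)}(pH_x + qH_y)(x(t),y(t))\,dt,
\end{equation*}
and because the integrand, the upper limit $T_\e(h)$, and the trajectory itself all depend analytically on $(h,\e)$, the function $R(h,\e)$ is jointly analytic on a neighborhood of $[a',b'] \times \{0\}$. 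I then decompose $R(h,\e) = R(h,0) + \e\,E(h,\e)$ with $E$ analytic; uniform boundedness of $E$ on $[a',b'] \times [-\e_0,\e_0]$ is then automatic from joint analyticity on a slightly larger compact neighborhood.

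Finally I identify $R(h,0)$ with $\sA(h)$. At $\e = 0$ the trajectory parametrizes $C_h$ with $\dot x = H_y$ and $\dot y = -H_x$, so $dx = H_y\,dt$ and $dy = -H_x\,dt$ along $C_h$; substituting, the time integral $R(h,0)$ becomes, up to a sign fixed by the orientation convention for $C_h$, the line integral $\int_{C_h}(p\,dy - q\,dx) = \sA(h)$. The principal obstacle, though essentially routine, is the joint analyticity and uniform boundedness of $E(h,\e)$: the argument requires care precisely where $h$ approaches a value at which $C_h$ acquires an equilibrium or becomes a homoclinic/heteroclinic connection, which is exactly why the statement restricts to compact subintervals of $(a,b)$ on which the family of unperturbed orbits is a smooth analytic family of closed curves.
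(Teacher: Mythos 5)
The paper does not prove Theorem \ref{thm:PPM}; it is quoted as a classical result with references to \cite[Sec. 2.1 of Part II]{ChLi} and \cite[Sec. 26]{IlyashenkoBook}, so there is no in-paper argument to compare against. Your proof is the standard one found in those references and it is correct: the identity $\frac{d}{dt}H = \e\,(pH_x+qH_y)$ along trajectories of \eqref{eq:Hpert}, integration up to the return time, joint analyticity of the trajectory and of $T_\e(h)$ in $(h,\e)$ on a compact subinterval of $(a,b)$ (which gives the analyticity and uniform boundedness of $E$ after dividing out one factor of $\e$), and the identification of $R(h,0)$ with the line integral \eqref{eq:Abel}. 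The only place you wave your hands is the sign in that last step: with $\dot x = H_y$, $\dot y=-H_x$ one gets $(pH_x+qH_y)\,dt = -(p\,dy-q\,dx)$ along the flow, so $R(h,0)$ equals $\sA(h)$ only because the flow traverses $C_h$ with the opposite (clockwise) orientation to the counterclockwise one implicitly used in \eqref{eq:Abel} --- this is consistent with how the paper evaluates $\sA_\infty$ in polar coordinates in \eqref{eq:Aint}, but it is worth stating explicitly rather than leaving it as ``up to a sign.''
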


In particular, it follows that if the zeros of $\sA$ are isolated and non-degenerate,
then each zero of $\sA$ corresponds to a limit cycle of the system \eqref{eq:Hpert}, see \cite[Sec. 2.1 of Part II]{ChLi}.

\subsection{The Kac-Rice formula}

The following result,
in its various forms, is fundamental in the study of real zeros of random analytic functions.
We follow \cite[Thm. 3.2]{AzaisWscheborbook},
which will suit our needs,
but we mention in passing that the Kac-Rice formula also holds in multi-dimensional and non-Gaussian settings.

\begin{thm}[Kac-Rice formula]\label{thm:KR}
Let $f:I \rightarrow \RR$ be a random function with $I \subset \RR$ an interval.
Suppose the following conditions are satisfied
\begin{itemize}
    \item[(i)] $f$ is Gaussian,
    \item[(ii)] $f$ is almost surely of class $C^1$,
    \item[(iii)] For each $t \in I$, the random vector $(f(t),f'(t))$ has a nondegenerate distribution,
    \item[(iv)] Almost surely $f$ has no degenerate zeros.
\end{itemize}
Then 
\be\label{eq:KRformula}
\EE |\{t \in I:f(t) =0\} | = \int_I \int_\RR |B| \rho_t(0,B) dB dt,
\ee
where $ \rho_t(A,B)$ denotes the
joint probability density function
of $A=f(t)$ and $B=f'(t)$.
\end{thm}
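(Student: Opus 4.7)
The plan is to establish \eqref{eq:KRformula} by the standard ``mollified zero count'' approach, in which one approximates the counting measure by an integral and then passes to the limit inside the expectation. The starting point is a deterministic identity valid for any $C^1$ function $f:I\to\RR$ whose zeros are all non-degenerate: for any compact subinterval $J\subset I$,
\be
\#\{t\in J:f(t)=0\} \;=\; \lim_{\e\to 0^+}\frac{1}{2\e}\int_J \mathbf{1}_{[-\e,\e]}(f(t))\,|f'(t)|\,dt.
\ee
This follows from a local change of variables $u=f(t)$ near each non-degenerate zero, together with the observation that $|f|$ is bounded below on the complement of a small neighbourhood of its (finite) zero set in $J$. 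Conditions (ii) and (iv) guarantee that this identity holds for almost every realization of $f$, reducing the problem to the limit of an integral expectation.

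Taking expectations and applying Tonelli's theorem to exchange $\EE$ with the integral over $t$, the goal becomes to evaluate
\be
\lim_{\e\to 0^+}\frac{1}{2\e}\int_J \EE\bigl[\mathbf{1}_{[-\e,\e]}(f(t))\,|f'(t)|\bigr]\,dt \;=\; \lim_{\e\to 0^+}\frac{1}{2\e}\int_J\!\!\int_{-\e}^{\e}\!\!\int_\RR |B|\,\rho_t(A,B)\,dB\,dA\,dt,
\ee
where condition (i) together with the non-degeneracy in (iii) is used to represent the inner expectation as an integral against the joint density $\rho_t$ of $(f(t),f'(t))$. Since $\rho_t$ is a Gaussian density, the function $A\mapsto \int_\RR |B|\,\rho_t(A,B)\,dB$ is continuous at $A=0$, and the inner average over $A\in[-\e,\e]$ divided by $2\e$ converges pointwise in $t$ to $\int_\RR |B|\,\rho_t(0,B)\,dB$ as $\e\to 0^+$.

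The crux of the argument, and where I expect the main technical effort, is justifying the interchange of the limit $\e\to 0^+$ with the expectation and with the outer integral $\int_J dt$. I would handle this by dominated convergence: on the compact interval $J$, non-degeneracy of the Gaussian covariance of $(f(t),f'(t))$ yields a uniform bound of the form $\rho_t(A,B)\le C\, e^{-c B^2}$ valid for $|A|\le 1$ and $t\in J$, and the second moment $\EE|f'(t)|^2$ is continuous in $t$ and hence bounded on $J$; together these supply an $L^1$ majorant independent of $\e$. To pass from $J$ to a general (possibly unbounded) interval $I$, one exhausts $I$ by an increasing sequence of compact subintervals $J_n\uparrow I$ and applies monotone convergence to both the zero count on the left and the non-negative integrand on the right, completing the proof.
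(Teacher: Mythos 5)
The paper does not actually prove this statement: it is quoted verbatim (as Theorem 3.2) from Aza\"is--Wschebor's book and used as a black box, so there is no internal proof to compare against. Your proposal is the standard ``Kac counting formula'' argument that underlies the cited result, and as an outline it is essentially correct: the deterministic identity, the passage to expectations via Tonelli, and the exhaustion of $I$ by compacta are all the right moves. Two technical points deserve more care than you give them. First, the deterministic counting identity as you state it fails when $f$ vanishes at an endpoint of $J$ (a non-degenerate zero at an endpoint contributes only $1/2$ to the limit); this is harmless because hypothesis (iii) gives $\PP\{f(t_0)=0\}=0$ for each fixed $t_0$, but it should be said. Second, and more substantively, your dominated-convergence majorant $\rho_t(A,B)\le C e^{-cB^2}$ requires the \emph{joint} law of $(f(t),f'(t))$ to be uniformly non-degenerate on $J$, whereas hypothesis (iii) only asserts non-degeneracy of the marginal $f(t)$. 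The existence of the joint density $\rho_t$ is implicit in the way the formula is written, but uniform non-degeneracy over $t\in J$ does not follow from (i)--(iv) as stated, and the careful versions of this theorem either add that hypothesis or phrase the integrand as $\EE[\,|f'(t)|\mid f(t)=0\,]\,p_{f(t)}(0)$, which survives degeneration of $f'(t)$ given $f(t)$. If you either assume joint non-degeneracy on compacta or restrict the $t$-integration to the (open, full-measure-relevant) set where the covariance matrix of $(f(t),f'(t))$ is invertible and argue by continuity of that covariance (which follows from Gaussianity plus a.s.\ $C^1$ paths), your argument closes.
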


As observed by Edelman and Kostlan \cite{EdelmanKostlan95}, We can express \eqref{eq:KRformula} in terms of the two-point correlation function $K(r,t):=\EE f(r) f(t)$, namely,
\be\label{eq:EKform}
\EE |\{t \in I:f(t) =0\} | = \frac{1}{\pi} \int_I \sqrt{\frac{\partial^2}{\partial t \partial r} \log K(r,t) \rvert_{r=t=\tau}} d\tau.
\ee
This identity is based on the fact that the
joint density $\rho_t(A,B)$ of the Gaussian pair $A=f(t), B=f'(t)$  can be expressed in terms of the covariance matrix of $f(t)$ and $f'(t)$ which in turn can be computed from evaluation along the diagonal $r=t$ of appropriate partial derivatives of $K(r,t)$.

\section{Proofs of Theorems \ref{thm:main} and \ref{thm:law}}\label{sec:main}

\begin{proof}[Proof of Theorem \ref{thm:main}]
It is enough to prove a lower bound of the form \eqref{eq:LB}
while restricting our attention to just those limit cycles that are contained in the unit disk $\DD = \{ (x,y) \in \RR^2 : x^2 + y^2 < 1 \}$.

Let $A_d = A_d(0,0) = \{ (x,y) \in \RR^2 : d^{-1/2} < |(x,y)| < 2d^{-1/2} \}$ denote the annulus centered at $(0,0)$ with inner radius $d^{-1/2}$ and outer radius $2d^{-1/2}$.
For $0<r<1$ and $0 \leq \theta < 2\pi$ let $A_d(r,\theta)$ denote the image of $A_d$ under the mapping $\psi \circ T_{r,\theta} \circ \psi^{-1} : \RR^2 \rightarrow \RR^2$,
where $\psi:S^2 \rightarrow \RR^2$
denotes central projection (with $\psi^{-1}$ chosen to map to the upper hemisphere as in Remark \ref{rmk:invariant})
and $T_{r,\theta} : S^2 \rightarrow S^2$
denotes the rotation of $S^2$
such that  $\psi \circ T_{r,\theta} \circ \psi^{-1} : \RR^2 \rightarrow \RR^2$
fixes the line through the origin with angle $\theta$ and maps the origin to $(r\cos \theta,r \sin \theta)$.

From elementary geometric considerations, we notice that $A_d(r,\theta)$ is an annulus with elliptical boundary components, each having angle of alignment $\theta$.
The two elliptical boundary components are asymptotically concentric and homothetic.
As $d \rightarrow \infty$
the center is located at 
$(r \cos \theta, r \sin \theta) + O(d^{-1})$.
The major semi-axis of the smaller ellipse equals $a d^{-1/2} + O(d^{-1})$, 
and the major semi-axis of the larger ellipse is $2a d^{-1/2} + O(d^{-1})$, with $a = \sqrt{1+r^2}$.  The minor semi-axes of the inner and outer boundary components are $d^{-1/2}$ and $2d^{-1/2}$ respectively.

For some $k>0$ independent of $d$, 
we can fit at least $k \cdot d$
many disjoint annuli $A_d(r_i,\theta_i)$ in the unit disk $\DD$.
Letting $I_j$ denote the indicator random variable
for the event $\cE_j$ that the $j$th annulus is transverse for $F$,
we note that if $\cE_j$ occurs then Proposition
\ref{prop:transann} guarantees that there is at least one limit cycle contained in the annulus associated to $\cE_j$.

Since the annuli are disjoint,
the limit cycles considered above are distinct (even though the events $I_j$ are dependent),
and this gives the following lower bound for the expectation of $N_d$
\begin{align}\label{eq:linearity}
\EE N_d &\geq \EE \sum  I_j \\
  &= \sum \EE I_j \\
&= \sum \PP \cE_j,
\end{align}
where $j$ ranges over an index set of size at least $k \cdot d$,
and we have used linearity of expectation
going from the first line to the second.

\begin{prop}\label{prop:LB}
There exists a constant $c_1>0$, independent of $d$, such that for any $0<r<1$ and $0 \leq \theta < 2\pi$,
the probability that $A_d(r,\theta)$ 
is a transverse annulus for $F$ is at least $c_1$.
\end{prop}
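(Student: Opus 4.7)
The plan is to reduce the problem to a scaling limit at the natural length scale $d^{-1/2}$ around $v_0 := (r\cos\theta, r\sin\theta)$, and then to exhibit a deterministic ``barrier'' vector field that guarantees transversality with positive probability under the limiting Gaussian law. As a first simplification, the Kostlan ensemble is invariant under rotations of the plane about the origin (these correspond to rotations of projective space about the $z$-axis), so the probability in question depends only on $r$; we take $\theta = 0$ so that $v_0 = (r, 0)$. It will be convenient to work with the normalized fields $\Phi(x) := (1+|x|^2)^{-d/2} p(x)$ and $\Psi(x) := (1+|x|^2)^{-d/2} q(x)$ from Remark \ref{rmk:invariant}: since $(\Phi, \Psi)$ and $(p, q)$ differ only by multiplication by a positive scalar, they share the same foliation and the transverse-annulus condition is equivalent for the two vector fields.

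Next, apply the rescaling $w = \sqrt{d}(x - v_0)$. Under this substitution $A_d(r, 0)$ becomes (modulo $O(d^{-1/2})$ corrections) a fixed bounded elliptical annulus $\mathcal{A}_\infty(r)$ with minor semi-axes $1, 2$ and major semi-axes $\sqrt{1+r^2}, 2\sqrt{1+r^2}$. From \eqref{eq:2ptK}, the normalized covariance kernel
$$K_\Phi(v_1, v_2) = \frac{(1+v_1\cdot v_2)^d}{(1+|v_1|^2)^{d/2}(1+|v_2|^2)^{d/2}}$$
admits a Taylor expansion showing that the rescaled fields $\hat{\Phi}(w) := \Phi(v_0 + w/\sqrt{d})$ and $\hat{\Psi}(w) := \Psi(v_0 + w/\sqrt{d})$ converge, uniformly with all derivatives on compact sets, to two independent centered Gaussian random fields with covariance
$$K_\infty(w_1, w_2) = \exp\left( -\frac{|w_1 - w_2|^2}{2(1+r^2)} + \frac{(v_0 \cdot (w_1 - w_2))^2}{2(1+r^2)^2} \right),$$
which after an $r$-dependent affine change of coordinates is the isotropic Bargmann-Fock kernel.

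Third, the deterministic radial vector field $V_0(w) := w$ makes $\mathcal{A}_\infty(r)$ a transverse annulus: an elementary computation yields $V_0(w) \cdot \nu(w) > 0$ on both boundary ellipses (for an origin-centered ellipse $\{a^2/A^2 + b^2/B^2 = 1\}$ the dot product of the radial vector with the outward normal is identically $+1$), and $V_0$ vanishes only at the origin, which lies strictly inside the inner ellipse for every $r \in [0, 1]$. The transverse-annulus condition being open in the $C^1(\overline{\mathcal{A}_\infty(r)})$ topology, there is an open $C^1$-neighborhood $\mathcal{U}(r)$ of $V_0$ whose elements all make $\mathcal{A}_\infty(r)$ transverse. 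The limiting Gaussian measure has full topological support on $C^1(\overline{\mathcal{A}_\infty(r)})$, since its Cameron-Martin space contains all polynomials and these are dense in $C^1$ of a compact set; hence the limit probability $c_\infty(r)$ that $(\hat{\Phi}, \hat{\Psi})$ lies in $\mathcal{U}(r)$ is strictly positive, and by continuity in $r$ is bounded below by some $\bar c > 0$ on the compact interval $[0, 1]$.

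Combining the three steps yields a uniform-in-$r$ lower bound of at least $\bar c / 2$ on the transversality probability for all sufficiently large $d$. For the finitely many small values of $d$ that remain, positivity at each $r$ follows from the positive Gaussian density on the (finite-dimensional) coefficient space together with the non-emptiness and openness of the transverse-annulus event, and continuity in $r$ combined with compactness of $[0,1]$ again gives a uniform-in-$r$ lower bound; taking the minimum over all cases produces $c_1 > 0$. The principal obstacle will be to justify the $C^1$ convergence up to the closed boundary of the rescaled annulus and to verify the full-support property of the limit Gaussian measure, both of which are needed to pass through the open transversality event in the limit.
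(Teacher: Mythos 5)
There is a genuine error at the heart of your argument: the deterministic barrier $V_0(w)=w$ does \emph{not} make $\mathcal{A}_\infty(r)$ a transverse annulus. You computed that the radial field has positive inner product with the \emph{outward normal of each ellipse}, i.e.\ the direction pointing away from the origin. But the transverse-annulus condition requires the field to point into the annulus (or out of it) on \emph{both} boundary components: on the inner ellipse ``into the annulus'' is the away-from-origin direction, while on the outer ellipse it is the toward-the-origin direction. So $V_0$ enters the annulus through the inner boundary and exits through the outer one; the region is a flow-through, not a trapping region, and Poincar\'e--Bendixson gives nothing. Consequently no $C^1$-neighborhood of $V_0$ is contained in the transversality event, and the full-support argument cannot be run from this barrier. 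This is precisely why the paper's barrier $B_r$ in \eqref{eq:defBr} contains the factor $3-d(x^2+y^2)$, which is positive on the inner boundary and negative on the outer one, so that the ``normal part'' $B_n$ points into the annulus on both components; and since that factor vanishes on an intermediate circle inside $A_d$, the paper must also add the nonvanishing rotational part $B_t$ to rule out equilibria in the annulus. Any corrected version of your barrier will need both features, e.g.\ $V_0(w)=(3-Q(w))\,w + (\text{rotation})$ with $Q$ the quadratic form defining the ellipses.

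Setting the barrier aside, your overall strategy (rescale at scale $d^{-1/2}$, pass to the Bargmann--Fock-type scaling limit, use openness of the transversality event in $C^1$ plus full support of the limiting Gaussian measure, and transfer back by a portmanteau-type inequality) is genuinely different from the paper's, which never takes a limit: it decomposes $F=B_r+\tfrac12(F_r^++F_r^-)$ using the Fischer-orthonormality of the barrier components (Lemma \ref{lemma:normone}), proves a quantitative transversality event for $B_r$ with probability bounded below uniformly in $d$ and $r$ (Lemma \ref{lemma:barrier}), and controls the remainder by a sup-norm estimate (Lemma \ref{lemma:supest}). Your covariance computation for $K_\infty$ is correct, and the soft route could in principle work with a corrected barrier, but it would still owe three nontrivial justifications that the paper's finite-$d$ argument avoids: convergence in law in $C^1$ up to the closed annulus (not just pointwise convergence of kernels), the full-support property of the limit measure in the $C^1$ topology, and uniformity in $r\in[0,1]$ of both the convergence and the limiting probability $c_\infty(r)$, without which ``for all sufficiently large $d$'' is not uniform in $r$ and the final constant $c_1$ is not obtained.
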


Before proving the proposition, let us first see how it is used to finish the proof of the theorem. 
We can apply Proposition \ref{prop:LB} to each of the events $\cE_j$ to get
$\PP \cE_j \geq c_1$ for all $j$.
Combining this with \eqref{eq:linearity},
we obtain
$$\EE N_d \geq c_1 \cdot k \cdot d.$$
Since $k>0$ and $c_1>0$ are each independent of $d$,
this proves Theorem \ref{thm:main}.
\end{proof}

It remains to prove Proposition \ref{prop:LB}.
We will need the following lemmas.
The first of these provides a sup-norm estimate and is proved using a standard application of Markov's inequality; the lemma alternatively follows as a corollary of \cite{LerarioStecconi} as pointed out by the referee, see Remark \ref{rmk:lemmafollowsfromLS} below for details.  Nevertheless, we also include the more elementary proof to make the paper more self-contained.

\begin{lemma}\label{lemma:supest}
Let $F = \binom{p}{q}$ be a random vector field with polynomial components $p,q$ sampled from the Kostlan ensemble of degree $d$.
Let $D_d = \DD_{3d^{-1/2}}$ denote the disk
of radius $3d^{-1/2}$ centered at the origin.
There exists a constant $C_0 > 0$ such that
$$ \PP \left\{ \sup_{(x,y) \in D_d} | F(x,y)| \geq C_0 \right\} < \frac{1}{3}.$$
\end{lemma}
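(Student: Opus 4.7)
The plan is to rescale to a fixed disk, where the rescaled process is controlled uniformly in $d$ by its Bargmann--Fock-like covariance structure, and then apply a standard tail bound for Gaussian processes.

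First, introduce the rescaled fields $\tilde{p}(u,v) = p(u/\sqrt{d}, v/\sqrt{d})$ and $\tilde{q}(u,v) = q(u/\sqrt{d}, v/\sqrt{d})$. Under this rescaling, $D_d$ corresponds to the fixed disk $\DD_3$ of radius $3$. By \eqref{eq:2ptK}, the covariance of $\tilde{p}$ at points $w_1, w_2 \in \DD_3$ is
\begin{equation*}
K_d(w_1, w_2) = \left(1 + \frac{w_1 \cdot w_2}{d}\right)^d,
\end{equation*}
which is precisely the rescaling that produces in the limit $d \to \infty$ the Bargmann--Fock covariance $e^{w_1 \cdot w_2}$ mentioned in the discussion preceding Theorem \ref{thm:law}. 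Using the elementary inequality $(1+t/d)^d \leq e^t$ for $t \geq 0$, the variance $\EE \tilde{p}(w)^2 = (1 + |w|^2/d)^d$ is bounded by $e^9$ on $\DD_3$. Differentiating $K_d$ and evaluating on the diagonal shows analogously that $\EE |\partial_u \tilde{p}(w)|^2$ and $\EE |\partial_v \tilde{p}(w)|^2$ are uniformly bounded on $\DD_3$, independently of $d$, and likewise for $\tilde{q}$.

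Next, I would deduce a uniform-in-$d$ bound on $\EE \sup_{\DD_3} |\tilde{p}|$. From the previous step, the canonical pseudo-metric $\rho(w_1, w_2) = \sqrt{\EE |\tilde{p}(w_1) - \tilde{p}(w_2)|^2}$ satisfies $\rho(w_1, w_2) \leq C|w_1 - w_2|$ for some $C$ independent of $d$ (integrate the bound on the variance of the gradient along the segment joining $w_1$ and $w_2$). Dudley's entropy bound, together with the variance bound $\EE \tilde{p}(w)^2 \leq e^9$, then yields $\EE \sup_{\DD_3} |\tilde{p}| \leq M$ for some $M$ independent of $d$, and likewise for $\tilde{q}$.

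Finally, since $|F(x,y)| \leq \sqrt{2}\max(|p(x,y)|, |q(x,y)|)$, we have $\sup_{D_d} |F| \leq \sqrt{2}\max(\sup_{\DD_3} |\tilde{p}|, \sup_{\DD_3} |\tilde{q}|)$, and Markov's inequality combined with a union bound gives
\begin{equation*}
\PP\left\{\sup_{D_d} |F| \geq C_0\right\} \leq \frac{2\sqrt{2}\,M}{C_0},
\end{equation*}
which is less than $1/3$ once $C_0 > 6\sqrt{2}\,M$. The main obstacle is the uniform Dudley-type estimate; once the covariance calculation is in hand the rest is a routine application of Gaussian process theory, and the crucial algebraic fact that powers the whole argument is the inequality $(1 + t/d)^d \leq e^t$ matching the degree $d$ with the disk radius $3d^{-1/2}$.
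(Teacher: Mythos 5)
Your proof is correct, and it shares the paper's overall skeleton: rescale by $d^{-1/2}$ to a fixed disk, establish a $d$-independent bound on $\EE\sup$ of each component, and finish with Markov's inequality and a union bound. The difference lies entirely in how the central estimate is obtained. The paper bounds $\EE\sup|\hat{p}|$ in a completely elementary way: after rescaling, each weighted monomial coefficient $\sqrt{d!/((d-|\beta|)!\,d^{|\beta|})}$ is at most $1$, so the triangle inequality gives $\sup|\hat{p}|\leq \sum_\beta |a_\beta|\,2^{|\beta|}/\sqrt{\beta_1!\,\beta_2!}$, whose expectation is a convergent series independent of $d$; this uses nothing about the Gaussian law beyond $\EE|a_\beta|=\sqrt{2/\pi}$ and in fact would work for any coefficient distribution with bounded first absolute moment. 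You instead work with the covariance kernel $(1+w_1\cdot w_2/d)^d\leq e^{w_1\cdot w_2}$, bound the diagonal second derivatives uniformly in $d$, deduce a $d$-uniform Lipschitz bound on the canonical pseudo-metric, and invoke Dudley's entropy bound. Both routes are valid; yours is the more conceptual one in that it sees only the covariance structure (so it transfers verbatim to the Bargmann--Fock limit and to other invariant ensembles), while the paper's is shorter, self-contained, and needs no Gaussian process machinery. One cosmetic remark: you rescale $D_d$ to $\DD_3$, consistent with the stated radius $3d^{-1/2}$, whereas the paper's proof rescales to $\DD_2(0)$ --- an apparent typo there that affects neither argument.
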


\begin{remark}\label{rmk:lemmafollowsfromLS}
The anonymous referee has pointed out that this lemma can be seen as a consequence of \cite[Thm. 23]{LerarioStecconi}.  Namely, taking $A$ to be the open set
$$A:= \{f \in C^\infty(\RR^2,\RR^2): \sup_{(x,y) \in \DD_3} |f(x,y)| > C_0 \},$$
by point (2) of \cite[Thm. 23]{LerarioStecconi} we have
$$ \liminf_{d\rightarrow \infty} \PP \{X_\infty \in A\} \leq \liminf_{d \rightarrow \infty} \PP \left\{ F\left( \frac{\cdot}{\sqrt{d}} \right) \in A\right\}=\liminf_{d \rightarrow \infty}  \PP \{ \sup_{(x,y) \in D_d} |F(x,y)| > C_0 \},$$
where $X_\infty$ is the field defined in point (1) of \cite[Thm. 23]{LerarioStecconi}.
Then the conclusion of the lemma follows, since the excursion probability
$\PP\{ X_\infty \in A \}$ of the Gaussian field $X_\infty$ goes to zero as $C_0 \rightarrow +\infty$
(as follows from, for instance, \cite[Thm. 2.1.1]{AdlerTaylor}).
\end{remark}


\begin{proof}[Proof of Lemma \ref{lemma:supest}]
Define $\ph(\xh,\yh) = p(d^{-1/2}\xh,d^{-1/2}\yh)$, and 
$\qh(\xh,\yh) = q(d^{-1/2}\xh,d^{-1/2}\yh)$,
and note that $(x,y) \in D_d$
corresponds to $(\xh,\yh) \in \DD_2(0)$.
It suffices to show that
\begin{equation}\label{eq:twoevents}
    \PP \left\{ \| \ph \|_{\infty,\DD_2(0)} > C_0/2 \right\} < \frac{1}{6}, \quad \text{and} \quad \PP \left\{ \| \qh \|_{\infty,\DD_2(0)} > C_0/2 \right\} < \frac{1}{6},
\end{equation}
where we used the notation $\displaystyle \| \ph \|_{\infty,\DD_2(0)} := \sup_{(\xh,\yh) \in \DD_2(0)} \ph(\xh,\yh).$
Indeed, the event that $\displaystyle \| F\|_{\infty,D_d} > C_0$
is contained in the union of the
two events considered in \eqref{eq:twoevents}.
We only need to prove the first statement in \eqref{eq:twoevents} since $\ph$ and $\qh$ are identically distributed.
For $(\xh,\yh) \in \DD_2(0)$ we have

\begin{align}
|\ph (\xh,\yh)| &= \left| \sum_{ |\beta| \leq d } 
a_{\beta}
\sqrt{\frac{d!}{(d-|\beta|)!\beta_1!\beta_2 !}}  \frac{\xh^{\beta_1}  \yh^{\beta_2}}{d^{|\beta|/2}} \right| \\
&\leq \sum_{ |\beta| \leq d } 
|a_{\beta}|
\frac{2^{|\beta|}}{\sqrt{\beta_1!\beta_2 !}}.
\end{align}

This implies
\begin{align}
\EE \|\ph \|_{\infty,\DD_2(0)} 
&\leq \sum_{ |\beta| \geq 0 } 
\sqrt{\frac{2}{\pi}}
\frac{2^{|\beta|}}{\sqrt{\beta_1!\beta_2 !}} \\
&\leq \sqrt{\frac{2}{\pi}} \sum_{ k \geq 0 } 
\frac{k \, 2^{k}}{\sqrt{\lfloor k/2 \rfloor}!},
\end{align}
which is a convergent series. 
So, we have shown that 
$$\EE \|\ph \|_{\infty,\DD_2(0)} \leq M<\infty,$$
with $M>0$ independent of $d$.
Applying Markov's inequality we have
$$\PP \{ \|\ph \|_{\infty,\DD_2(0)} > C_0/2 \} \leq \frac{2 M}{C_0},$$
and \eqref{eq:twoevents} follows
when $C_0$ is chosen larger than $12M$.
\end{proof}

For $0 \leq r \leq 1$, write $a=\sqrt{1+r^2}$,
and consider the random vector field
\begin{equation}\label{eq:defBr}
B_r(x,y) = \binom{ \xi_0 b_1(x,y)}{ \eta_0 b_2(x,y) } ,
\end{equation}
where $\xi_0,\eta_0$
are independent standard normal random variables, and
$$b_1(x,y) = \frac{1}{1+a} \left( -d^{1/2}ay + \frac{d^{1/2}}{V_d}x(3-d^2 x^2-d^2y^2 )\right)$$
$$b_2(x,y) = \frac{1}{1+a} \left( d^{1/2}x + \frac{d^{1/2}}{V_d} a y (3 - d^{2} x^2 - d^{2} y^2 )\right),$$
where
\be\label{eq:Vd}
V_d = 3+\sqrt{\frac{2}{(1-1/d)(1-2/d)}} + \sqrt{\frac{6}{(1-1/d)(1-2/d)}}.
\ee

In the proof of Proposition \ref{prop:LB}, the vector field $B_r$ will appear (by way of a choice of a new orthonormal basis) as a component (the key component for verifying the event in the statement of the proposition) in a decomposition of the Kostlan random vector field.  The value of the constant $V_d$ ensures that the associated basis elements have unit Fischer norm (see Lemma \ref{lemma:normone} below).  To give some intuition for the above choice of $b_1,b_2$ let us consider the annulus $A_d(0,0)$ which corresponds to the case $a=1$ for simplicity and write $B_r(x,y)$ as
\begin{equation}
    B_r(x,y) = \frac{1}{2} \left( B_t(x,y) + \frac{B_n(x,y)}{V_d} \right), 
\end{equation}
where
$$B_t(x,y) = d^{1/2} \binom{-\xi_0 y}{\eta_0 x} , \quad B_n(x,y) =  d^{1/2} \left( 3 - d(x^2 + y^2) \right) \binom{\xi_0 x}{\eta_0 y}.$$

In the model case that $\xi_0,\eta_0$ fall inside a common interval far from zero, say, in an interval $(C_1-\e,C_1+\e)$ with $C_1 \gg 1 \gg \e > 0$, the annulus $A_d(0,0)$ is highly transverse for $B_r$ with the vector field $B_n$ ensuring the inward pointing condition (here note that the scalar function $3-d(x^2+y^2)$ is positive on the inner boundary and negative on the outer boundary) and the vector field $B_t$ preventing occurrence of equilibria in $A_d(0,0)$ while pointing in an approximately orthogonal direction to $B_n$ and hence not disturbing its inward pointing property.  Details are given in the proof of Lemma \ref{lemma:barrier} and in its application within the proof of Proposition \ref{prop:LB} below.  First, we prove the following more basic lemma verifying that the value of $V_d$ leads to $b_1,b_2$ having unit Fischer norm.

\begin{lemma}\label{lemma:normone}
Let $B_r$ be the vector field defined in \eqref{eq:defBr}, and let $h_1(x,y,z)$ and $h_2(x,y,z)$ denote the homogeneous polynomials of degree $d$ that coincide with the components $b_1$ and $b_2$ of $B_r$ on the affine plane $z=1$.
Then $h_1$ and $h_2$ each have unit Fischer norm.
\end{lemma}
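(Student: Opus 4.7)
The plan is a direct computation using the Newman--Shapiro identity from Section~\ref{sec:prelim}. Since $b_1$ and $b_2$ are each a sum of a linear piece and a cubic piece in $(x,y)$, the first step is to homogenize to degree $d$ by multiplying each linear monomial by $z^{d-1}$ and each cubic monomial by $z^{d-3}$. This yields explicit four-term expressions
\begin{align*}
h_1(x,y,z) &= \tfrac{1}{1+a}\Bigl[-d^{1/2}a\, y z^{d-1} + \tfrac{3 d^{1/2}}{V_d}\, x z^{d-1} - \tfrac{d^{5/2}}{V_d}\, x^3 z^{d-3} - \tfrac{d^{5/2}}{V_d}\, x y^2 z^{d-3}\Bigr], \\
h_2(x,y,z) &= \tfrac{1}{1+a}\Bigl[d^{1/2}\, x z^{d-1} + \tfrac{3 d^{1/2}a}{V_d}\, y z^{d-1} - \tfrac{d^{5/2}a}{V_d}\, x^2 y z^{d-3} - \tfrac{d^{5/2}a}{V_d}\, y^3 z^{d-3}\Bigr].
\end{align*}

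Next I would apply the identity $\|h\|_F^2 = \sum_{|\alpha|=d} |h_\alpha|^2 \binom{d}{\alpha}^{-1}$ to each of $h_1$ and $h_2$ term by term. Only three inverse multinomial coefficients enter: $\binom{d}{1,0,d-1}^{-1} = \binom{d}{0,1,d-1}^{-1} = 1/d$ for the two linear monomials; $\binom{d}{3,0,d-3}^{-1} = \binom{d}{0,3,d-3}^{-1} = 6/(d(d-1)(d-2))$ for the pure cubic monomials $x^3 z^{d-3}$ and $y^3 z^{d-3}$; and $\binom{d}{1,2,d-3}^{-1} = \binom{d}{2,1,d-3}^{-1} = 2/(d(d-1)(d-2))$ for the mixed cubic monomials $x y^2 z^{d-3}$ and $x^2 y z^{d-3}$. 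Substituting the squared coefficients and collecting, both $\|h_1\|_F^2$ and $\|h_2\|_F^2$ reduce to explicit closed-form rational expressions in $d$, $a$, and $V_d$.

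The concluding step is to substitute the value of $V_d$ from \eqref{eq:Vd} and confirm the result equals $1$. The three summands $3$, $\sqrt{2/((1-1/d)(1-2/d))}$, and $\sqrt{6/((1-1/d)(1-2/d))}$ of $V_d$ are tailored to match, respectively, the linear $x z^{d-1}$/$y z^{d-1}$ contribution (carrying the factor of $3$ from inside $b_1,b_2$), the mixed cubic contribution (carrying the inverse multinomial $2/(d(d-1)(d-2))$), and the pure cubic contribution (carrying the inverse multinomial $6/(d(d-1)(d-2))$), while the $(1+a)^{-2}$ prefactor absorbs the remaining $a$-dependence. The main obstacle is neither conceptual nor deep: it is purely algebraic bookkeeping, the most delicate point being to track the asymmetric placement of $a$ (on the linear block in $b_1$ but on the cubic block in $b_2$) and to verify that both $\|h_1\|_F^2$ and $\|h_2\|_F^2$ collapse to $1$ uniformly in $r \in [0,1)$.
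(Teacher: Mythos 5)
Your plan starts the same way the paper's proof does---homogenize, invoke orthogonality of the monomials, and read off the three inverse multinomial coefficients, all of which you have right---but the step you wave through (``confirm the result equals $1$'') is exactly where the argument fails, and it does not fail in a way that more careful bookkeeping can repair. Using the squared-norm identity you quote, and writing $\lambda=\bigl((1-1/d)(1-2/d)\bigr)^{-1/2}$ so that $V_d=3+(\sqrt2+\sqrt6)\lambda$, the cubic block of $b_1$ contributes $(d^{3/2}/V_d)^2\cdot 6/(d(d-1)(d-2))=6\lambda^2/V_d^2$ plus $(d^{3/2}/V_d)^2\cdot 2/(d(d-1)(d-2))=2\lambda^2/V_d^2$, so that
$$(1+a)^2\,\|h_1\|_F^2=a^2+\frac{9+8\lambda^2}{V_d^2},$$
which equals $(1+a)^2$ only if $(9+8\lambda^2)/V_d^2=1+2a$; but the left side tends to $17/(3+\sqrt2+\sqrt6)^2\approx0.36$ as $d\to\infty$ while the right side is at least $3$ for $a=\sqrt{1+r^2}\in[1,\sqrt2]$. (Here I have used the cubic coefficient $d^{3/2}/V_d$, which is what the decomposition $B_n=d^{1/2}(3-d(x^2+y^2))\binom{\xi_0 x}{\eta_0 a y}$ in the proof of Lemma \ref{lemma:barrier} forces; the factor $d^2$ you copied literally from \eqref{eq:defBr} is inconsistent with that decomposition and would make $\|h_1\|_F$ grow like $d$, which is even worse.)

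The reason the paper nevertheless arrives at $1$ is that its proof does something different from your step two: after listing the Fischer norms of the individual monomials it adds the quantities $|c_i|\,\|m_i\|_F$ \emph{linearly}, obtaining $\frac{1}{1+a}\bigl(a+\frac{1}{V_d}[\,3+(\sqrt6+\sqrt2)\lambda\,]\bigr)$, and $V_d$ is calibrated precisely so that this linear sum equals $1$. For pairwise orthogonal monomials that linear sum is the triangle-inequality upper bound on $\|h_1\|_F$, not the norm itself, so the quantity the paper normalizes is not the Fischer norm as defined in Section \ref{sec:prelim} and as you (correctly) set out to compute. In short, your method is the right one, but if you execute it you will find $\|h_1\|_F<1$ rather than $=1$, so you cannot assert that the algebra ``collapses to $1$.'' The honest conclusion of your computation is that $\|h_1\|_F$ and $\|h_2\|_F$ are bounded above and below by positive constants uniformly in $d$ and $r$---which is what the downstream argument in Proposition \ref{prop:LB} actually needs, after replacing $b_1,b_2$ by $b_1/\|h_1\|_F$ and $b_2/\|h_2\|_F$---but that is a repair of the lemma, not a proof of it as stated.
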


\begin{proof}[Proof of Lemma \ref{lemma:normone}]
We first recall that the monomials are orthogonal with respect to the Fischer inner product,
and next we recall the norms of each of the monomials appearing in $B_r(x,y)$:
$$\| x z^{d-1} \|_F = \| y z^{d-1} \|_F = d^{-1/2}$$
$$\| x^3 z^{d-3} \|_F = \| y^3 z^{d-3} \|_F = \sqrt{\frac{6}{d(d-1)(d-2)}}$$
$$\| x^2y z^{d-3} \|_F = \| y^2x z^{d-3} \|_F = \sqrt{\frac{2}{d(d-1)(d-2)}}$$
Then letting $h_1(x,y,z)$ and $h_2(x,y,z)$ denote the homogeneous polynomials of degree $d$ that coincide with $b_1$ and $b_2$ (respectively) on the affine plane $z=1$, we find
$$ \|h_1(x,y,z) \|_F =  \frac{1}{1+a} \left( a+ \frac{1}{V_d}\left[3+ \frac{\sqrt{6}+\sqrt{2}}{\sqrt{(1-1/d)(1-2/d)}}\right]\right),$$
which is unity by the choice of $V_d$,
and similarly
$$\| h_2(x,y,z) \|_F = \frac{1}{1+a} \left( 1 + \frac{a}{V_d} \left[ 3+ \frac{\sqrt{6}+\sqrt{2}}{\sqrt{(1-1/d)(1-2/d)}}\right]\right) = 1.$$
\end{proof}

Let $T_r = \psi \circ  R_r \circ \psi^{-1}$
where $\psi : S^2 \rightarrow \RR^2$ denotes central projection,
and $R_r$ denotes the rotation of $S^2$ such that $\psi \circ  R_r \circ \psi^{-1}$ fixes the $x$-axis and maps the origin to $(r,0)$.

Let $\hat{n}_r$
denote the inward-pointing unit normal vector
on $\p A(r,0)$,
and let 
$\Nh_r = \hat{n}_r \circ T_r$ denote
its pullback
by the transformation $T_r$.
More explicitly, we can write
\begin{equation}\label{eq:defNr}
    \Nh_r(x,y) = \pm \frac{1}{\sqrt{x^2/a^2 + y^2}} \binom{x/a}{y} + O(d^{-1/2}),
\end{equation}
where the sign is ``$+$'' for the inner boundary component and ``$-$'' for the outer component.

\begin{lemma}\label{lemma:barrier}
Fix $C_0>0$.
For $0 \leq r \leq 1$, 
let $B_r$ and $\Nh_r$ be as defined above.
There exists a constant $c_2>0$ independent of both $d$ and $r$, such that there is probability at least $c_2$ of both of the following being satisfied.
\begin{itemize}
    \item[(i)] Everywhere along $\p A_d$, the scalar product $\langle B_r(x,y), \Nh_r(x,y) \rangle$  of $B_r$ and $\Nh_r$ satisfies $$\langle B_r(x,y), \Nh_r(x,y) \rangle \geq 2C_0 .$$
    \item[(ii)] We have $$\inf_{(x,y) \in A_d} \| B_r(x,y) \| \geq 2C_0.$$
\end{itemize}
\end{lemma}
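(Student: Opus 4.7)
The plan is to exhibit a rectangle $E \subset \RR^2$ in the $(\xi_0, \eta_0)$-plane of Gaussian measure bounded below by a positive constant $c_2$ independent of $d$ and $r$, such that both (i) and (ii) hold whenever $(\xi_0, \eta_0) \in E$. Since the randomness in $B_r = (\xi_0 b_1, \eta_0 b_2)$ enters only through the two-dimensional Gaussian vector $(\xi_0, \eta_0)$, this reduces the claim to a deterministic analysis of the pair $(b_1, b_2)$ together with a short perturbation argument handling the asymmetric scaling.

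For the deterministic step I would parametrize $(x,y) = \rho(\cos\theta, \sin\theta)$ and decompose
\begin{equation*}
b_1 = \mu(\rho)\bigl(-a\sin\theta + \nu(\rho)\cos\theta\bigr), \qquad b_2 = \mu(\rho)\bigl(\cos\theta + a\nu(\rho)\sin\theta\bigr),
\end{equation*}
with radial amplitude $\mu(\rho)$ and radial weight $\nu(\rho)$ determined by the polynomial factor in $b_1, b_2$ rescaled by $V_d$. The calibration of $V_d$ in \eqref{eq:Vd}, which by Lemma \ref{lemma:normone} ensures $\|h_1\|_F = \|h_2\|_F = 1$, is precisely what forces $\nu$ to take opposite signs on the inner and outer boundary circles of $A_d$ while keeping $|\nu|$ bounded below by a positive constant throughout $A_d$. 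Pairing against the normal $\hat{N}_r$ from \eqref{eq:defNr}, whose leading part is $\pm(\cos\theta/a, \sin\theta)/D(\theta)$ with $D(\theta) = \sqrt{\cos^2\theta/a^2 + \sin^2\theta}$, the rotational part of the decomposition pairs off against the $a$-weighting in $\hat{N}_r$ in a clean cancellation, and only the radial piece contributes. A direct computation then yields $\langle (b_1, b_2), \hat{N}_r \rangle$ of the form $\pm\, \mu(\rho)\nu(\rho)\, a D(\theta)$ up to the $O(d^{-1/2})$ correction in $\hat{N}_r$; the sign pattern on inner versus outer boundaries combines with the sign flip of $\nu$ to make this positive and bounded below by some $M > 0$, uniformly in $d, r, \theta$. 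An analogous computation of $\|(b_1, b_2)\|$ yields the uniform lower bound throughout $A_d$ needed for (ii).

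For the probabilistic step, let $\tau > 0$ be chosen below and set $E := \{\tau \leq \xi_0 \leq 2\tau,\ \tau \leq \eta_0 \leq 2\tau\}$. By independence, $\PP(E) = \PP(\tau \leq N(0,1) \leq 2\tau)^2$ is a positive constant depending only on $\tau$, which will serve as $c_2$. On $E$, decompose $\langle B_r, \hat{N}_r \rangle = \min(\xi_0, \eta_0)\langle (b_1, b_2), \hat{N}_r \rangle + R$, where the remainder $R$ depends on the disparity $|\xi_0 - \eta_0| \leq \tau$. The main term is at least $\tau M$ by the deterministic step, while $|R|$ is bounded by $\tau$ times an explicit geometric factor coming from the rotational piece of $(b_1, b_2)$; choosing $\tau$ large enough (in terms of $C_0$ and $M$) makes the main term dominate and gives $\langle B_r, \hat{N}_r \rangle \geq 2C_0$ uniformly on $\partial A_d$. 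Condition (ii) follows from the simpler bound $\|B_r\|^2 \geq \min(\xi_0,\eta_0)^2 \|(b_1, b_2)\|^2 \geq \tau^2 M^2$. The main technical obstacle is controlling $R$: the transversality of $(b_1,b_2)$ relies on an exact cancellation between the rotational piece of $(b_1, b_2)$ and the $a$-weighting in $\hat{N}_r$, which becomes only approximate for $B_r$ when $\xi_0 \neq \eta_0$. A cleaner alternative is to first verify that the direction of $(b_1, b_2)$ makes an angle with $\hat{N}_r$ uniformly bounded away from $\pi/2$; positive rescalings in a bounded ratio range then preserve this angular gap and yield (i) without bookkeeping the remainder explicitly.
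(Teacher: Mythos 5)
Your overall strategy---conditioning $(\xi_0,\eta_0)$ on a fixed region of positive Gaussian measure and then running a deterministic transversality estimate---is the same as the paper's, but the region you choose creates a genuine gap. The tangential part of $B_r$, namely $d^{1/2}\binom{-\xi_0 a y}{\eta_0 x}$, is orthogonal to $\Nh_r$ only when $\xi_0=\eta_0$; its residual pairing with $\Nh_r$ is $\pm d^{1/2}(\eta_0-\xi_0)xy/\sqrt{x^2/a^2+y^2}$, of size up to roughly $2a\,|\xi_0-\eta_0|$ on $\p A_d$. The normal part contributes about $\tfrac{2}{V_d}\min(\xi_0,\eta_0)$ with $V_d\to 3+\sqrt6+\sqrt2\approx 6.9$, so its coefficient is an order of magnitude \emph{smaller} than the tangential one. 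On your rectangle $[\tau,2\tau]^2$ the disparity can equal $\tau=\min(\xi_0,\eta_0)$, and at, say, $(\xi_0,\eta_0)=(2\tau,\tau)$, $r=0$, $\theta=\pi/4$ on the inner boundary one gets $\langle B_r,\Nh_r\rangle\approx\tfrac12\bigl(-\tfrac{\tau}{2}+\tfrac{3\tau}{V_d}\bigr)<0$, so (i) actually fails on part of your event. Neither of your proposed fixes closes this: both the main term and the remainder are homogeneous of degree one in $(\xi_0,\eta_0)$, so taking $\tau$ large does not change their ratio; and the ``angular gap'' alternative fails because passing from $(b_1,b_2)$ to $(\xi_0 b_1,\eta_0 b_2)$ is a pointwise anisotropic map $\mathrm{diag}(\xi_0,\eta_0)$, not a rescaling, and with ratio $2$ it can rotate directions by $\arcsin(1/3)\approx 19.5^\circ$, more than the available gap. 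The paper's remedy is precisely to take $\xi_0,\eta_0\in(C_1-\e,C_1+\e)$ with $\e\ll 1\ll C_1$, so the disparity is at most $2\e$ in absolute terms while the normal contribution is of order $C_1/V_d$; this event still has probability bounded below independently of $d$ and $r$.

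Two smaller corrections to your deterministic step. The sign flip of the radial weight between the two boundary circles is not ``forced by the calibration of $V_d$'': it is built into the factor $3-d(x^2+y^2)$, which equals $2$ on the inner circle and $-1$ on the outer one; $V_d$ is only a normalization making the homogenized components have unit Fischer norm (Lemma \ref{lemma:normone}), which is what later allows $b_1,b_2$ to serve as orthonormal basis elements. Moreover that radial factor \emph{vanishes} on the circle of radius $\sqrt{3}\,d^{-1/2}$ inside $A_d$, so your claim that $|\nu|$ is bounded below throughout $A_d$ is false, and the uniform lower bound in (ii) cannot come from the radial piece; it must come from the rotational piece, whose norm is $\gtrsim\min(\xi_0,\eta_0)\,d^{1/2}|(x,y)|\geq\min(\xi_0,\eta_0)$ on $A_d$ (and which is pointwise orthogonal to the radial piece when $\xi_0=\eta_0$, as the paper exploits via $\|B_r\|^2\geq\|B_t\|^2-|\langle B_t,B_n\rangle|$). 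Your inequality $\|B_r\|^2\geq\min(\xi_0,\eta_0)^2\|(b_1,b_2)\|^2$ is correct, but the justification of the lower bound on $\|(b_1,b_2)\|$ needs to go through the rotational part.
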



\begin{remark}\label{rmk:perturb}
Note that, properties (i) and (ii) 
imply that, for any vector field $F_0$ satisfying $|F_0(x,y)| \leq C_0$ for all $(x,y) \in A_d$, the perturbation
$B_r+F_0$ of $B_r$ satisfies
$\langle B_r + F_0 , \Nh_r \rangle > 0$ on $\p A_d$, and we also have that $B_r + F_0$ does not vanish in $A_d$, i.e., $A_d$ is a transverse annulus for $B_r+F_0$.
\end{remark}

\begin{proof}[Proof of Lemma \ref{lemma:barrier}]
Fix $0\leq r \leq 1$.
Let us decompose $B_r(x,y)$ defined in \eqref{eq:defBr} as
\begin{equation}\label{eq:Bdecomp}
    B_r(x,y) = \frac{1}{1+a} \left( B_t(x,y) + \frac{B_n(x,y)}{V_d} \right),
\end{equation}
where
$$B_t(x,y) = d^{1/2} \binom{-\xi_0 a y}{\eta_0 x} ,$$
and
$$B_n(x,y) =  d^{1/2} \left( 3 - d(x^2 + y^2) \right) \binom{\xi_0 x}{\eta_0 a y}.$$

For $C_1 \gg 1 \gg \e > 0$ fixed,
consider the event $\cE$ that 
\begin{equation}\label{eq:interval}
\xi_0,\eta_0 \in (C_1 - \e, C_1 + \e).
\end{equation}
The probability $c_2$ of $\cE$ is positive and independent of $d$ and $r$.
We will show that $\cE$
implies each of the properties (i) and (ii).

First we rewrite $B_t$ and $B_n$ using the 
condition \eqref{eq:interval} defining the event $\cE$:
\begin{equation}\label{eq:Bcdecomp}
B_t(x,y) = C_1 d^{1/2} \binom{-a y}{x} + d^{1/2} \binom{- \gamma_1  a y}{ \gamma_2 x} ,
\end{equation}
and 
\begin{equation}\label{eq:Bgdecomp}
B_n(x,y) = C_1d^{1/2} \left( 3 - d(x^2 + y^2) \right) \binom{x}{a y} + d^{1/2} \left( 3 - d(x^2 + y^2) \right) \binom{\gamma_1 x}{ \gamma_2 a y},
\end{equation}
with $-\e < \gamma_i < \e$ for $i=1,2$.

Note that $B_t$ is approximately
orthogonal to the vector field $\Nh_r$ along $\p A_d$, and $B_n$ is approximately parallel to $\Nh_r$ along $\p A_d$.
This is the basic idea used in the estimates that follow.

We recall from \eqref{eq:defNr}
that $\Nh_r(x,y) = \pm \frac{1}{\sqrt{x^2/a^2 + y^2}} \binom{x/a}{y}+O(d^{-1/2})$ on $\p A_d$.

Since $\binom{x/a}{y}$ is orthogonal to $\binom{-a y}{x}$,
we have, using \eqref{eq:Bcdecomp} and \eqref{eq:defNr},
\begin{equation}\label{eq:BtNr}
\left| \langle B_t(x,y) , \Nh_r(x,y) \rangle \right| = d^{1/2} \left| \left(\gamma_2 - \gamma_1 \right)\frac{xy}{\sqrt{x^2/a^2+y^2}}\right| + O(d^{-1/2}), \quad \text{on } \p A_d.
\end{equation}
Applying the estimates
$$ d^{1/2} \frac{|xy|}{\sqrt{x^2/a^2+y^2}} \leq 2a < 4 , \quad (x,y) \in \p A_d,$$
$$ |\gamma_2- \gamma_1| < 2 \e,$$
in \eqref{eq:BtNr} we obtain, for all $d$ sufficiently large,
\begin{equation}\label{eq:Bt}
\left| \langle B_t(x,y) , \Nh_r(x,y) \rangle \right| < 8\e, \quad \text{on } \p A_d.
\end{equation}

Using \eqref{eq:Bgdecomp}, we obtain for $(x,y) \in \p A_d$
\begin{align}\label{eq:alignBnNr}
\langle B_n(x,y) , \Nh_r(x,y) \rangle &= \pm a \frac{d^{1/2} \left( 3 - d(x^2 + y^2)\right)}{\sqrt{x^2/a^2+y^2}} \left( C_1(x^2/a^2+y^2) +   \gamma_1 x^2/a^2 + \gamma_2 y^2 \right) \\
 &\geq \pm a \frac{d^{1/2} \left( 3 - d(x^2 + y^2)\right)}{\sqrt{x^2/a^2+y^2}} (x^2/a^2+y^2) 
 \left( C_1 -2\e \right)  \\
  &\geq \pm d^{1/2} \left( 3 - d(x^2 + y^2)\right)\sqrt{x^2+y^2} 
 \left( C_1 -2\e \right),
\end{align}
where the choice of $\pm$ sign is determined according to the component of $\partial A_d$ as in \eqref{eq:defNr}.
We have
$$\pm d^{1/2} \left( 3 - d(x^2 + y^2)\right)\sqrt{x^2+y^2} = 2, \quad \text{on } \p A_d, $$
which holds on both components of $\p A_d$.
This gives
\be\label{eq:Bn}
\langle B_n(x,y) , \Nh_r(x,y) \rangle \geq 2C_1 - 4 \e, \quad \text{on } \p A_d.
\ee

The two estimates \eqref{eq:Bt} and \eqref{eq:Bn} together give 
$$\langle B_r(x,y) , \Nh_r(x,y) \rangle \geq \frac{1}{1+a} \left( \frac{2C_1 - 4 \e}{V_d} -8\e \right),\quad \text{on } \p A_d.$$
Since $1+a<3$ and $V_d$ converges to a postive constant $3 + \sqrt{6}+\sqrt{2}$ as $d \rightarrow \infty$,
for an appropriate choice of $C_1 \gg \e > 0$ we have
$\frac{1}{1+a} \left( \frac{1}{V_d} (2C_1 - 4\e)  -8\e \right) > 2C_0$
for all $d$ sufficiently large.
This completes the verification that property (i)
holds.

Next we check property (ii).
Throughout the annulus $A_d$ we have
\begin{align}\label{eq:normlowerbound}
(1+a)^2 \| B_r \|^2 &= \| B_t \|^2 + \frac{1}{V_d^2}\|B_n \|^2 + \frac{2}{V_d} \langle B_t , B_n \rangle \\
&\geq \| B_t \|^2 + \frac{2}{V_d} \langle B_t , B_n \rangle \\
&\geq \| B_t \|^2 - | \langle B_t , B_n \rangle |,
\end{align}
where we have used in the last line that $V_d$ which is defined in \eqref{eq:Vd} is at least $3$. Next we estimate $| \langle B_t , B_n \rangle |$.
\begin{align}\label{eq:normlowerboundcont}
| \langle B_t , B_n \rangle | &=  4 a d C_1 |(\gamma_2-\gamma_1)xy|
+2ad |(\gamma_2^2-\gamma_1^2)xy| \\
&\leq 8 a C_1 |\gamma_1-\gamma_2|
+4a |\gamma_2^2-\gamma_1^2|\\
&\leq 16 a C_1 \e + 4a \e^2 \\
&\leq 32 C_1 \e + 8 \e^2,
\end{align}
Thus, 
\begin{align}
(1+a)^2 \| B_r \|^2 &\geq \| B_t \|^2 - | \langle B_t , B_n \rangle | \\
&\geq \| B_t \|^2 - 32 C_1 \e - 8 \e^2 \\
&\geq (C_1-\e)^2 - 32 C_1 \e - 8 \e^2,
\end{align}
which is larger than $(6C_0)^2$ for an appropriate choice of $C_1 \gg \e >0$.
So we have
$$ (1+a)^2\| B_r \|^2 \geq (6C_0)^2,$$
which shows that property (ii) is satisfied
since $1+a < 3$.
\end{proof}

\begin{proof}[Proof of Proposition \ref{prop:LB}]
Fix $0<r<1$ and $0\leq \theta < 2\pi$.
Applying a rotation of the $xy$-plane about the origin (and using the rotation invariance of the Kostlan ensemble), we may assume that $\theta=0$.

Let $\cE$ denote the event that
$A_d(r,0)$ is a transverse annulus for $F$,
i.e., $\cE$ is the event that the following two conditions both hold:

\begin{itemize}
	\item The vector field $F$ points into $A_d(r,0)$
	on both boundary components, i.e.,
	$\langle F , \hat{n} \rangle > 0$ at each point
	on $\partial A_d(r,0)$ where $\hat{n}$
	denotes the inward pointing normal vector.
	\item $F$ has no equilibria in $A_d(r,0)$.
\end{itemize}

Equivalently, $\cE$ is the event that
$A_d(r,0)$ is a transverse annulus for $G$ defined as
$$G(x,y) = (1+x^2+y^2)^{-d/2} F(x,y).$$
By Remark \ref{rmk:invariant}, 
the component functions of $G$ are invariant (as random functions) under the group of transformations induced by
orthogonal transformations of projective space.

Let $T_r = \psi \circ  R_r \circ \psi^{-1}$,
where $\psi$ denotes central projection, and $R_r$ is the rotation such that $T_r$ fixes the $x$-axis and maps the origin to $(r,0)$.

As defined above, let $\Nh_r = \hat{n}_r \circ T_r$ denote the pullback to $\p A_d$ of the inward-pointing unit normal vector on $\p A_d(r,0)$ by the map $T_r$, where recall $A_d = A_d(0,0)$.
For each $(x,y) \in \p A_d$,
we recall from \eqref{eq:defNr}
that 
$\Nh_r(x,y) = \pm \frac{1}{\sqrt{x^2/a^2 + y^2}}\binom{x/a}{y} + O(d^{-1/2})$.

The event $\cE$ occurs if
$\langle G \circ T_r, \Nh_r \rangle > 0$ 
at each point on $\p A_d$
and $G \circ T_r$ has no equilibria in $A_d$.
We may replace $G \circ T_r$ by $G$
without changing the probability of the event
by Remark \ref{rmk:invariant}.
Since $G$ is a nonvanishing scalar multiple of $F$,
we may then replace $G$ by $F$.  
This leads us to consider the event $\cE_0$ that both of the following are satisfied
\begin{itemize}
	\item $\langle F , \Nh_r \rangle > 0$
	on $\partial A_d$.
	\item $F$ has no equilibria in $A_d$.
\end{itemize}
This event has the same probability as $\cE$.

As stated in Remark \ref{rmk:basis}, in the description of the Kostlan polynomial as a random linear combination with Gaussian coefficients,
one is free to choose the basis (as long as it is orthonormal with respect to the Fischer product).
Since the degree-$d$ homogenizations of the components of $B_r$ each have unit Fischer norm (by Lemma \ref{lemma:normone}), each can be used as elements in an orthonormal basis (orthonormal with respect to the Fischer product)
while expanding the components of the random vector field $F$ as a linear combination with i.i.d. Gaussian coefficients.
We write this in an abbreviated form as
\begin{align}
F(x,y) &= \binom{p(x,y)}{q(x,y)} \\
&= B_r(x,y) + F_r^\perp(x,y)\\
&= \binom{\xi_0 b_1(x,y)}{\eta_0 b_2(x,y)} + \binom{f_1^\perp(x,y)}{f_2^\perp(x,y)},
\end{align}
where in the first component all the terms involving the basis elements besides those in $b_1$ are collected in $f_1^\perp$, and in the second component all the terms involving the basis elements besides those in $b_2$ are collected in $f_2^\perp$.

Let 
$$\tilde{B}_r(x,y) = \binom{\xi_1 b_1(x,y)}{\eta_1 b_2(x,y)}$$ 
be an independent copy of $B_r(x,y)$,
i.e., $\xi_1$ and $\eta_1$ are standard normal random variables independent of eachother and of $\xi_0$ and $\eta_0$.
Define
$$F_r^{\pm} = F_r^\perp \pm \tilde{B}_r.$$
Then $F_r^{\pm}$ are each distributed as $F$,
and we can write
$$ F(x,y) = B_r(x,y) + \frac{1}{2}(F_r^+ + F_r^-) .$$

Define $\cE_1$ to be the event
described in Lemma \ref{lemma:barrier}
concerning $B_r(x,y)$.

Define $\cE_2$ to be the event that
$\|F_r^+(x,y)\|_\infty \leq C_0$,
and define $\cE_3$ to be the event that
$\|F_r^-(x,y)\|_\infty \leq C_0$,

If $\cE_1$, $\cE_2$, and $\cE_3$ all occur
then Lemma \ref{lemma:barrier} and Remark \ref{rmk:perturb} imply that $\cE_0$ occurs. Hence, we have
\be \label{eq:intersection}
\PP \{ \cE_0 \} \geq  \PP \{ \cE_1 \cap \cE_2 \cap \cE_3 \} .
\ee
By Lemma \ref{lemma:supest},
the complementary events $\cE_2^c$ and $\cE_3^c$
each have probability less than $1/3$.
Notice that $\cE_1$ is independent of $\cE_2$ and $\cE_3$, but $\cE_2$ and $\cE_3$ are not independent of each other.
Thus, we use a union bound with \eqref{eq:intersection} to estimate the probability of $\cE_0$
\begin{align}
\PP \{ \cE_0 \} &\geq  \PP \{ \cE_1 \cap \cE_2 \cap \cE_3 \} \\
&= \PP  \cE_1  \PP \{ \cE_2 \cap \cE_3 \} \\
&\geq \PP  \cE_1  \left( 1 - \PP \cE_2^c - \PP \cE_3^c \right)\\
&\geq (1/3)\PP  \cE_1  > 0,
\end{align}
which proves the proposition
since Lemma \ref{lemma:barrier} 
provides that the probability of $\cE_1$
is positive and independent of $d$.
This concludes the proof of the proposition.
\end{proof}

\begin{remark}\label{rmk:PropReferee}
The anonymous referee has pointed out that Proposition \ref{prop:LB} can be proved without the use of Lemmas \ref{lemma:supest}, \ref{lemma:normone}, 
 \ref{lemma:barrier} using the following very clean application of \cite[Thm. 23]{LerarioStecconi}, thus giving a high ground point of view on the result by understanding it as a consequence of the convergence of the Kostlan ensemble under rescaling.
 Namely, after having reduced the proof of the proposition to showing positivity of the probability of the event $\cE_0$, the proof can be finished as follows.
 Define $X_d(x,y) := F(\frac{1}{\sqrt{d}}(x,y))$.
 By point (2) of \cite[Thm. 23]{LerarioStecconi}, we have $X_d \rightarrow X_\infty$ in law in the space $C^\infty(\RR^2,\RR^2)$, where $X_\infty$ is the random vector field defined in point (1) of \cite[Thm. 23]{LerarioStecconi}.
 Fix $M>1>\e>0$, and consider the event $\cE_0^*$ such that
 \begin{itemize}
    \item $ \sup_{|(x,y)| \leq 2} |X_d(x,y)| \leq M$ 
	\item $\langle X_d(x,y) , (\frac{x^2}{a^2}+y^2)^{-1/2} \binom{x/a}{y} \rangle > \e$
	for all $(x,y)$ satisfying $|(x,y)| = 1$;
    \item  $\langle X_d(x,y) , -(\frac{x^2}{a^2}+y^2)^{-1/2} \binom{x/a}{y} \rangle > \e$
	for all $(x,y)$ satisfying $|(x,y)| = 2$;
	\item $X_d(x,y) \neq 0$ for all $(x,y)$ satisfying $1 \leq |(x,y)| \leq 2$.
\end{itemize}
The above are open conditions on $X_d$, and therefore it follows from point (2) of \cite[Thm. 23]{LerarioStecconi} that
$$ \liminf_{d \rightarrow + \infty} \PP \{X_d \in \cE_0^* \} \geq \PP\{X_\infty \in \cE_0^*\} = c.$$
Moreover, by point (3) of  \cite[Thm. 23]{LerarioStecconi}, we have $c>0$.
The conditions of the event $\cE_0$ can be described in terms of $X_d$ as follows (here $\hat{N}_r$ is defined as in \eqref{eq:defNr})
 \begin{itemize}
	\item $\langle X_d(x,y) , \hat{N}_r( \frac{1}{\sqrt{d}}(x,y)) \rangle > 0$
	for all $(x,y)$ satisfying $|(x,y)| \in \{1,2\}$;
	\item $X_d(x,y) \neq 0$ for all $(x,y)$ satisfying $1 \leq |(x,y)| \leq 2$.
\end{itemize}
From \eqref{eq:defNr} we have the following uniform convergence
\be \label{eq:unifN}
\hat{N}_r \left( \frac{1}{\sqrt{d}}(x,y) \right) \rightarrow \pm \left(\frac{x^2}{a^2}+y^2\right)^{-1/2} \binom{x/a}{y} + O(d^{-1/2}), \quad |(x,y)| \in \{1,2\},
\ee
from which it follows that, for $d$ sufficiently large, the event $\cE_0^*$ implies the event $\cE_0$.  
Indeed, the event $\cE_0^*$ together with the uniform convergence statement \eqref{eq:unifN} implies that for $|(x,y)| \in \{1,2\}$
$$ \left\langle X_d(x,y), \hat{N}_r \left( \frac{1}{\sqrt{d}}(x,y)\right) \right\rangle \geq \e - M \cdot O(d^{-1/2}),$$
which is positive for all $d$ sufficiently large.
We conclude that, for all $d$ sufficiently large, $\PP \{ X_d \in \cE_0 \} \geq c > 0$, as desired.
\end{remark}

\begin{proof}[Proof of Theorem \ref{thm:law}]
Let $f,g$ be random real-analytic functions sampled independently
from the Gaussian space induced by the Bargmann-Fock inner product
and normalized by the deterministic scalar $\exp\left\{ -(x^2+y^2)/2 \right\}$.
Explicitly,
$$f(x,y) = \exp\left\{ -(x^2+y^2)/2 \right\} \sum_{j,k \geq 0} a_{j,k} \frac{x^j y^k}{\sqrt{j! k!}} , \quad a_{j,k} \sim N(0,1).$$
Including the factor $\exp\left\{ -(x^2+y^2)/2 \right\}$ ensures that $f,g$ are invariant under translations \cite{BeGa}, and  multiplication of a vector field by a non-vanishing scalar function does not affect its trajectories, in particular, its limit cycles.

Let $\hat{N}_R$ denote the number of empty limit cycles situated within the disk of radius $R$ of the vector field 
$$F(x,y) = \binom{f(x,y)}{g(x,y)}.$$

We will use the integral geometry sandwich \cite{NazarovSodin1} 
\be\label{eq:IGS}
\int_{\DD_{R-r}} \frac{\N(x, r)}{|\DD_r |} dx
\leq \N(0,R) \leq \int_{\DD_{R+r}} \frac{\N^*(x, r)}{| \DD_r |} dx,
\ee 
where 
$\N(x,r)$ denotes the number of empty limit cycles completely contained in the disk $\DD_r(x)$ of radius $r$ centered at $x$, $\N^*(x,r)$ denotes the number of empty limit cycles that intersect $\overline{\DD_r(x)}$, and $|\DD_r|=\pi r^2$ denotes the area of $\DD_r$.
The statement of the integral geometry sandwich in \cite{NazarovSodin2} is for connected components of a nodal set, but as indicated in the proof it is an abstract result that holds in more generality (cf. \cite{SarnakWigman}, \cite{ALL}) that includes the case at hand.

Dividing by $|\DD_R|$, we rewrite \eqref{eq:IGS} as
\be
\left(1-\frac{r}{R}\right)^2\frac{1}{|\DD_{R-r}|}\int_{\DD_{R-r}} \frac{\N(x, r)}{|\DD_r |} dx
\leq \frac{\N(0,R)}{|\DD_R|} \leq \left(1+\frac{r}{R}\right)^2\frac{1}{|\DD_{R+r}|}\int_{\DD_{R+r}} \frac{\N^*(x, r)}{| \DD_r |} dx.
\ee 

Taking expectation 
and using translation invariance to conclude that $\EE \N(x,r)$ is independent of $x$, we obtain
\be\label{eq:sandexp}
\left(1-\frac{r}{R}\right)^2 \frac{\EE \N(0, r)}{|\DD_r |} 
\leq \frac{\EE \N(0,R)}{|\DD_R|} \leq \left(1+\frac{r}{R}\right)^2 \frac{\EE \N^*(0, r)}{| \DD_r |} .
\ee 
Next we assert that 
\be\label{eq:tangencies}
 \N^*(0,r) \leq \N(0,r) + \sT(r),
\ee
where 
$$\sT(r) = \# \left\{ (x,y) \in \partial \DD_r : \langle F(x,y),\binom{x}{y} \rangle = 0 \right\}$$ denotes the number of tangencies of $F$ with the circle of radius $r$.
Indeed, each limit cycle that intersects but is not completely contained in $\DD_r$
must have at least one entry and exit point along $\partial \DD_r$.  By considering the intersection of $\partial \DD_r$ with the interior of the limit cycle and selecting one of its connected components, we may choose such an entry-exit pair to be the endpoints of a circular arc of $\partial \DD_r$ that is completely contained in the interior of the limit cycle.  At these entry and exit points, $F$ is directed inward and outward, respectively, and by the intermediate value theorem applied to $\langle F, \binom{x}{y}\rangle$ there is an intermediate point along $\partial \DD_r$ where $F$ is tangent to $\partial \DD_r$. By our choice of the entry-exit pair, the point of tangency is in the interior of the limit cycle.  Thus, empty limit cycles correspond to distinct such points of tangency, and this verifies \eqref{eq:tangencies}.

Next we use the Kac-Rice formula to prove the following lemma concerning the expectation of the number $\sT_r$ of such tangencies along $\partial \DD_r$.

\begin{lemma}\label{lemma:tang}
The expectation of $\sT_r$ satisfies
\be\label{eq:lineartang}
\EE \sT_r = 2 \sqrt{1+r^2}.
\ee
\end{lemma}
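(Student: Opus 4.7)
The plan is to parametrize the circle $\partial \DD_r$ by $v(\theta)=(r\cos\theta,r\sin\theta)$, $\theta\in[0,2\pi)$, and rewrite $\sT_r$ as the count of real zeros of the Gaussian function
\begin{equation}
h(\theta):=\cos\theta\cdot f(v(\theta))+\sin\theta\cdot g(v(\theta)),
\end{equation}
since $\langle F(v),v\rangle$ vanishes iff $h(\theta)=0$ (the common factor of $r$ is irrelevant). Then I would apply the Edelman--Kostlan form \eqref{eq:EKform} of the Kac--Rice formula to the covariance kernel $K(\theta_1,\theta_2):=\EE\,h(\theta_1)h(\theta_2)$. The hypotheses (Gaussianity, $C^1$ regularity, nondegeneracy, no degenerate zeros) are routine consequences of the real-analyticity and translation invariance of the normalized Bargmann--Fock field.

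The key input is that the normalization by $\exp\{-(x^2+y^2)/2\}$ in the statement of Theorem \ref{thm:law} makes the covariance translation invariant, namely
\begin{equation}
\EE\,f(v_1)f(v_2)=\EE\,g(v_1)g(v_2)=\exp\bigl\{-\tfrac12|v_1-v_2|^2\bigr\}.
\end{equation}
Substituting $v_j=v(\theta_j)$ yields $|v_1-v_2|^2=2r^2(1-\cos(\theta_1-\theta_2))$, and since $f,g$ are independent and identically distributed, I would obtain
\begin{equation}
K(\theta_1,\theta_2)=(\cos\theta_1\cos\theta_2+\sin\theta_1\sin\theta_2)\,e^{-r^2(1-\cos(\theta_1-\theta_2))}=\kappa(\theta_1-\theta_2),
\end{equation}
with $\kappa(u):=\cos u\cdot e^{-r^2(1-\cos u)}$. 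In particular $h$ is a stationary Gaussian process on the circle, so the Edelman--Kostlan integrand is constant in $\tau$.

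It then remains to compute the integrand at $u=0$. Since $\kappa(0)=1$ and $\kappa'(0)=0$ (by evenness of $\kappa$), the Edelman--Kostlan formula gives
\begin{equation}
\frac{\partial^2}{\partial\theta_1\partial\theta_2}\log K\Big|_{\theta_1=\theta_2=\tau}=-\kappa''(0).
\end{equation}
A short Taylor expansion near $u=0$ gives $\kappa(u)=1-\tfrac{1+r^2}{2}u^2+O(u^4)$, hence $-\kappa''(0)=1+r^2$. Plugging this into \eqref{eq:EKform} yields
\begin{equation}
\EE\,\sT_r=\frac{1}{\pi}\int_0^{2\pi}\sqrt{1+r^2}\,d\tau=2\sqrt{1+r^2},
\end{equation}
which is the claimed identity. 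I do not anticipate any genuine obstacle here; the only subtlety is verifying the Kac--Rice hypotheses (in particular the almost-sure absence of degenerate zeros of $h$), which follows from a standard Bulinskaya-type argument applied to the jointly nondegenerate Gaussian vector $(h(\theta),h'(\theta))$ that stationarity and the explicit form of $\kappa$ supply.
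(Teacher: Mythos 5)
Your proposal is correct and follows essentially the same route as the paper: both identify $\sT_r$ with the zero count of the Gaussian function $\langle F,v\rangle$ on the circle and apply the Kac--Rice formula, exploiting rotational invariance/stationarity. The only difference is cosmetic: the paper uses the joint-density form of Kac--Rice at $\theta=0$ (computing $\operatorname{Var}(rf_y(r,0)+g(r,0))=1+r^2$ via independence), while you use the Edelman--Kostlan form \eqref{eq:EKform} with the explicit stationary kernel $\kappa(u)=\cos u\,e^{-r^2(1-\cos u)}$ and evaluate $-\kappa''(0)=1+r^2$; both computations are valid and yield the same answer.
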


\begin{proof}[Proof of Lemma \ref{lemma:tang}]
Let $h_1(r,\theta)$ denote 
 $\frac{x}{r}f(x,y)+\frac{y}{r}g(x,y)$
evaluated at $x=r\cos (\theta), y=r \sin (\theta)$.
For fixed $r$ the zeros of $h_1$ correspond to the tangencies counted by $\sT_r$.
Hence, applying Theorem \ref{thm:KR} 
(it is easy to check that the conditions (i)-(iv) are satisfied)
gives
\be\label{eq:KRtang}
\EE \sT_r = \int_{0}^{2\pi} \int_\RR |B| \rho_\theta(0,B) dB d\theta,
\ee
where $\rho_{\theta}(A,B)$
denotes the joint probability density
of $A=h_1(r,\theta)$ and $B=\partial_\theta h_1(r,\theta)$.
The vector field $\binom{x/r}{y/r}$ is invariant with respect to rotations about the origin,
and $f,g$ are also invariant (meaning the distribution of probability on the space of these random functions is invariant) with respect to rotations about the origin.
This implies that the inside integral in \eqref{eq:KRtang} is independent of $\theta$,
and this gives
\be\label{eq:KRtang2}
\EE \sT_r = 2\pi \int_\RR |B| \rho_0(0,B) dB .
\ee
Computation of $B = \partial_\theta h_1(r,\theta)$ evaluated at $\theta=0$ gives
$$ \partial_\theta h_1 (r,\theta) \rvert_{\theta=0} = r f_y(r,0) + g(r,0) .$$
Since $f,g$ are independent of each other, and evaluation of $f$ is independent of $f_y$ evaluated at the same point, we have that
$rf_y(r,0) + g(r,0)$ is independent of $f(r,0)$.
So the joint density $\rho_0$ of $A\rvert_{\theta=0}=f(r,0)$ and $B\rvert_{\theta=0}=rf_y(r,0)+g(r,0)$ is the product of densities
of $f(r,0) \sim N(0,1)$ 
and $rf_y(r,0)+g(r,0) \sim N(0,1+r^2)$
which gives
$$\rho_0(A,B) = \frac{1}{2\pi\sqrt{1+r^2}} \exp\left\{-\frac{A^2}{2} \right\}  \exp\left\{-\frac{B^2}{2(1+r^2)} \right\} ,$$
and in particular
\be
\rho_0(0,B) = \frac{1}{2\pi\sqrt{1+r^2}}   \exp\left\{-\frac{B^2}{2(1+r^2)} \right\} .
\ee

Then \eqref{eq:KRtang2} becomes
\begin{equation}
\EE \sT_r = \frac{2\pi}{\sqrt{2\pi}} \int_\RR |B|
\frac{1}{\sqrt{2\pi(1+r^2)}}   \exp\left\{-\frac{B^2}{2(1+r^2)} \right\} B,
\end{equation}
where we have separated the constants so that the integral gives the absolute moment of a Gaussian of mean zero and variance $1+r^2$.
From this we conclude
\begin{equation}
\EE \sT_r = 2\sqrt{1+r^2},
\end{equation}
which gives \eqref{eq:lineartang} as desired and concludes the proof of the lemma.
\end{proof}

Applying \eqref{eq:tangencies} in
\eqref{eq:sandexp} and returning to the abbreviated notation $\hat{N}_R = \N(0,R)$, we have
\be\label{eq:sandexp2}
\left(1-\frac{r}{R}\right)^2 \frac{\EE \hat{N}_r}{|\DD_r |} 
\leq \frac{\EE  \hat{N}_R}{|\DD_R|} \leq \left(1+\frac{r}{R}\right)^2 \frac{\EE  \hat{N}_r+ \EE \sT(r)}{| \DD_r |} .
\ee 

Let $\e>0$ be arbitrary.
We will show that there exists $r$ such that for all $R \gg r$ sufficiently large we have
\be\label{eq:goalrR}
\left| \frac{\EE \hat{N}_R}{|\DD_R|} -  \frac{\EE \hat{N}_r}{| \DD_r |} \right| < \e .
\ee

From \eqref{eq:lineartang}
we have 
$$ \frac{\EE \sT(r) }{ | \DD_r|} = O(r^{-1}),$$
and using this we choose $r$ large enough so that
\be\label{eq:smalltang}
 \frac{\EE \sT(r) }{ | \DD_r|} < \frac{\e}{8}.
\ee
We have
$$\left(1-\frac{r}{R} \right)^2 \frac{\EE \hat{N}_r }{ | \DD_r|} > \frac{\EE \hat{N}_r }{ | \DD_r|} - 2\frac{r}{R} \frac{\EE \hat{N}_r }{ | \DD_r|} ,$$
and we may choose $R>r$ sufficiently large (with the above choice of $r$ now fixed) so that
\be\label{eq:lower}
\left(1-\frac{r}{R} \right)^2 \frac{\EE \hat{N}_r }{ | \DD_r|} > \frac{\EE \hat{N}_r }{ | \DD_r|} - \e .
\ee
Choosing $R$ larger if necessary we also have
\be\label{eq:upper}
\left(1+\frac{r}{R} \right)^2 \frac{\EE \hat{N}_r }{ | \DD_r|} < \frac{\EE \hat{N}_r }{ | \DD_r|} + \frac{\e}{2} .
\ee

Then \eqref{eq:smalltang}, \eqref{eq:lower}, and \eqref{eq:upper} imply
\be
\frac{\EE \hat{N}_r}{|\DD_r |} - \e
< \frac{\EE \hat{N}_R}{|\DD_R|} < \frac{\EE \hat{N}_r}{| \DD_r |} + \e ,
\ee
which implies \eqref{eq:goalrR}.

It follows that 
$\frac{\EE \hat{N}_R}{|\DD_R|}$ converges as $R \rightarrow \infty$, i.e., there exists a constant $c$ such that
\begin{equation}
\EE \hat{N}_R \sim c \cdot R^2, \quad \text{as } R \rightarrow \infty.
\end{equation}
Positivity of the constant $c$ follows from a simple adaptation of the proof of Theorem \ref{thm:main}; instead of considering shrinking elliptical annuli one can take a collection of circular annuli of fixed radius.  One can fit
$\geq k\cdot R^2$ many disjoint such annuli in $\DD_R$.  The subsequent constructions simplify as well since the distortion factor $a$ is absent.
We omit further details since no new complications arise.
\end{proof}

\section{Limit cycles surrounding a perturbed center focus: Proofs of Theorems \ref{thm:as} and \ref{thm:addendum}}\label{sec:smallpert}

\begin{proof}[Proof of Theorem \ref{thm:as}]
Recall from the statement of the theorem that $N_d(\rho)$ denotes the number of limit cycles in the disk $\DD_\rho$ of the vector field $$F(x,y)=\binom{y+\e(d)p_d(x,y)}{-x + \e(d) q_d(x,y)}, $$
where
\be\label{eq:seriesp}
 p_d(x,y) = \sum_{1 \leq j+k \leq d} a_{j,k} x^j y^k,
\ee
\be\label{eq:seriesq}
q_d(x,y) = \sum_{1 \leq j+k \leq d} b_{j,k} x^j y^k.
\ee
Let $p_\infty$ and $q_\infty$
denote the random bivariate power series
obtained by letting $d\rightarrow \infty$ in \eqref{eq:seriesp}, \eqref{eq:seriesq}.

Fix $R$ satisfying $\rho < R < 1$.

Since $|a_{j,k}|, |b_{j,k}| \leq 1$
the series \eqref{eq:seriesp}, \eqref{eq:seriesq} are each majorized by
\be\label{eq:majorization}
\sum_{j,k \geq 1} |x|^j |y|^k = \sum_{j\geq 1} |x|^j \sum_{k\geq 1} |y|^k,
\ee
which converges uniformly in $\{(x,y)\in \CC^2 :|x|\leq R, |y| \leq R \}$.
In particular, $p_\infty$ and $q_\infty$ converge absolutely and uniformly in the bidisk $\DD_R \times \DD_R$.

Since our goal is to prove almost sure convergence of $N_d(\rho)$, it is important to note that  $p_d$ and $p_\infty$
are naturally coupled 
in a single probability space;
the $d$th-order truncation of $p_\infty$
is distributed as $p_d$.
Thus, in order to prove the desired almost sure convergence, we sample $p_\infty$, $q_\infty$ and then show that,
almost surely, the sequence $N_d(\rho)$ associated with the truncations $p_d$, $q_d$
converges as $d \rightarrow \infty$.

We take note of the following error estimates based on the tail of the majorization \eqref{eq:majorization},
\begin{equation}\label{eq:majortail}
    \sup_{\DD_R \times \DD_R} |p_\infty - p_d| < R^{2d}/(1-R)^2, \quad \sup_{\DD_R \times \DD_R} |q_\infty - q_d| < R^{2d}/(1-R)^2.
\end{equation}

Consider the vector field
\begin{equation}\label{eq:infinity}
F_\infty(x,y) = \binom{y + \e p_\infty(x,y)}{-x + \e q_\infty(x,y)},
\end{equation}
and let $\sP_{\infty,\e}:\RR_+\rightarrow \RR_+$ denote the corresponding Poincar\'e first return map along the positive $x$-axis.
Then by Theorem \ref{thm:PPM}, we have the following perturbation expansion
\be\label{eq:pertexp}
\sP_{\infty,\e}(r) = r + \e \sA_\infty(r) + \e^2 E(r,\e) ,
\ee
where $\sA_\infty(r)$ is the first Melnikov function
given by the integral
\begin{equation}\label{eq:A1}
    \sA_\infty(r) = \int_{x^2+y^2=r^2} p_\infty dy - q_\infty dx.
\end{equation}
Let $X(\rho)$ denote the number of zeros of $\sA_\infty$ in $(0,\rho)$.
$X(\rho)$ is our candidate limit for the sequence $N_d(\rho)$.
Before proving this convergence, let us first verify the second part of the conclusion in the theorem, namely, $\EE X(\rho)< \infty$.
Note that $\sA_\infty$ is analytic in the unit disk, and by the non-accumulation of zeros for analytic functions,
$X(\rho)$ is finite-valued.
On the other hand, the finiteness of its expectation $\EE X(\rho)$ requires additional estimates shown in the following lemma.
\begin{lemma}\label{lemma:Finite}
The expectation $\EE X(\rho)$ of the number of zeros of $\sA_\infty$ in $(0,\rho)$ satisfies
\be
\EE X(\rho) < \infty.
\ee
\end{lemma}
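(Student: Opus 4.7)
The plan is to dominate the real zeros of $\sA_\infty$ on $(0,\rho)$ by its complex zeros in a disk of radius slightly larger than $\rho$, and then control the latter via Jensen's formula. The bounded-coefficient structure of the model should give enough deterministic control that the only random quantity left to estimate is a $\log(1/|\sA_\infty(0)|)$-type term.

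First I would expand $\sA_\infty$ as a power series in $r$. Parametrizing the circle by $x=r\cos\theta$, $y=r\sin\theta$ turns \eqref{eq:A1} into
\be
\sA_\infty(r) \;=\; r\int_0^{2\pi}\bigl[p_\infty(r\cos\theta,r\sin\theta)\cos\theta + q_\infty(r\cos\theta,r\sin\theta)\sin\theta\bigr]\,d\theta \;=\; \sum_{m\geq 1} c_m\, r^{m+1},
\ee
where each $c_m$ is a linear combination of $\{a_{j,k},b_{j,k}:j+k=m\}$ weighted by trigonometric moments of modulus $\leq 2\pi$. Since $|a_{j,k}|,|b_{j,k}|\leq 1$, this yields the \emph{deterministic} bound $|c_m|\leq 4\pi(m+1)$. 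A direct computation (using that $\int_0^{2\pi}\cos^{j}\theta\sin^{k}\theta\,d\theta=0$ when $j$ or $k$ is odd) gives $c_1=\pi(a_{1,0}+b_{0,1})$, which is nonzero almost surely.

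Next I would set $g(r):=\sA_\infty(r)/r^2 = \sum_{m\geq 1}c_m r^{m-1}$, which is holomorphic on $\DD\subset\CC$, has $g(0)\neq 0$ a.s., and has exactly the same zeros as $\sA_\infty$ on $(0,\rho)$. Fix any $R\in(\rho,1)$ and apply Jensen's formula to $g$ on $\{|z|\leq R\}$: combined with the trivial containment of real zeros in $(0,\rho)$ among complex zeros in $\{|z|\leq\rho\}$, this gives
\be
X(\rho)\;\leq\; \frac{\log M(R)+\log(1/|g(0)|)}{\log(R/\rho)},\qquad M(R):=\sup_{|z|\leq R}|g(z)|.
\ee
The coefficient bound provides the \emph{deterministic} estimate $M(R)\leq 4\pi\sum_{m\geq 1}(m+1)R^{m-1}<\infty$, so only the term $\EE\log(1/|g(0)|)$ is random.

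Finally I would verify $\EE\log(1/|g(0)|)<\infty$. Since $g(0)/\pi=a_{1,0}+b_{0,1}$ is a sum of two independent uniform $[-1,1]$ variables, its density is bounded (triangular on $[-2,2]$), so $\PP(|g(0)|<s)=O(s)$ as $s\to 0^+$; the layer-cake identity $\EE\log^-|g(0)|=\int_0^\infty\PP(|g(0)|<e^{-t})\,dt$ is therefore finite. Combining the three steps yields $\EE X(\rho)<\infty$. The only genuinely delicate point is the deterministic coefficient bound, which is specific to the bounded-coefficient model; for a Gaussian model one would need log-integrability of the supremum $\log M(R)$, requiring additional (standard but nontrivial) Gaussian concentration estimates rather than the a.s.\ bound used here.
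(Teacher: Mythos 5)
Your proof is correct and follows essentially the same route as the paper: reduce to complex zeros of $\sA_\infty(r)/r^2$ in a disk $\DD_R$ with $\rho<R<1$, apply Jensen's formula, bound the sup deterministically using $|a_{j,k}|,|b_{j,k}|\leq 1$, and control $\EE\log(1/|g(0)|)$ via the bounded density of a linear combination of independent uniform variables. If anything, your normalization is cleaner — you correctly use the leading coefficient $c_1=\pi(a_{1,0}+b_{0,1})=g(0)$ in Jensen's formula, whereas the paper's displayed bound is written in terms of $\zeta_1$ (the $j+k=3$ coefficient) rather than the constant term $\zeta_0$ of $f=\sA_\infty/r^2$; the argument for log-integrability is identical either way.
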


\begin{proof}[Proof of Lemma \ref{lemma:Finite}]
As above, fix $R$ satisfying $\rho < R < 1$.
Writing the integral \eqref{eq:A1} in polar coordinates, we have
\be\label{eq:Aint}
    \sA_\infty(r) = \int_{0}^{2\pi} p_\infty(r \cos(\theta),r \sin(\theta)) r\cos(\theta) d\theta +  q_\infty(r \cos(\theta),r \sin(\theta)) r\sin(\theta) d\theta.
\ee
This defines an analytic function valid for complex values of $r$, and for $r\in \DD_R$ with $\theta \in [0,2\pi]$ we have $(r\cos(\theta),r\sin(\theta)) \in \DD_R \times \DD_R$.
From the majorization \eqref{eq:majorization},
we have $|p_\infty|$ and $|q_\infty|$ are uniformly bounded by $1/(1-R)^2$ in $\DD_R \times \DD_R$.
Using this to estimate the above integral we obtain
\be\label{eq:Asupest}
\sup_{r \in \DD_R} |\sA_\infty(r)| \leq 2\pi \frac{2R}{(1-R)^2}.
\ee
Integrating term by term in \eqref{eq:Aint},
we can write $\sA_\infty(r)$ as a series.
\be\label{eq:Ainfinity}
    \sA_\infty(r) = \sum_{m=0}^\infty \zeta_m r^{2m+2},
\ee
where
\be\label{eq:zetam}
\zeta_m = \int_{0}^{2\pi}   \sum_{j+k=2m+1} (a_{j,k} \cos(\theta) + b_{j,k} \sin(\theta) ) (\cos(\theta))^j(\sin(\theta))^k   d\theta.
\ee
(The odd powers of $r$ do not survive in the series due to symmetry.)

Let $f(r) = \sum_{m=0}^\infty \zeta_m r^{2m} = A_\infty(r)/r^2$. 
In order to see that
$\EE X(\rho) < \infty$
we consider the number $Y(\rho)$ of complex zeros of $f$ in the disk $\DD_\rho$, and we use the following estimate based on Jensen's formula \cite[Ch. 5, Sec. 3.1]{Ahlfors}
$$Y(\rho) \leq \frac{1}{\log(\rho/R)} \log\left(\frac{M}{ |\zeta_1|}\right),$$
where
$$M:=\sup_{|r|=R} |f(r)|.$$
This gives
$$\EE Y(\rho) \leq \frac{1}{\log(\rho/R)} \left( \EE \log M - \EE \log |\zeta_1| \right) .$$
From \eqref{eq:Asupest} we have
\be
\sup_{|r|=R} |f(r)| \leq \frac{4\pi}{R(1-R)^2},
\ee 
and hence it suffices to show that
\be\label{eq:finite}
- \EE \log |\zeta_1| < \infty.
\ee
We obtain, after computing the integrals in \eqref{eq:zetam},
\be\label{eq:zeta1}
\zeta_1= 2\pi ( a_{3,0} + b_{2,1}/4 + a_{1,2}/4 + b_{0,3}).
\ee
Then the desired estimate \eqref{eq:finite} follows from the triangle inequality and finiteness of the integral $\displaystyle \int_{-1}^1 \log |u| du.$
This completes the proof of the lemma.
\end{proof}

We have that $\sA_\infty$ is smooth on $(0,\rho)$
(in fact $\sA_\infty$ is analytic in the unit disk), and the probability density of its point evaluations are bounded.
Hence, we can apply Bulinskaya's Lemma \cite[Prop. 1.20]{AzaisWscheborbook} to conclude that almost surely the zeros of $\sA_\infty$ are all non-degenerate, i.e.,
the derivative $\sA_\infty'$ does not vanish at any zero.
Non-degeneracy implies the following lemma.

\begin{lemma}\label{lemma:C1}
Almost surely, the zero set of $\sA_\infty$ is stable with respect to $C^1$-small perturbations.
\end{lemma}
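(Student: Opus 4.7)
The plan is to condition on the almost sure event (guaranteed by Bulinskaya's Lemma and the analyticity of $\sA_\infty$) that all zeros of $\sA_\infty$ in $(0,\rho)$ are simple, that $\sA_\infty$ is not identically zero, and that $\rho$ itself is not a zero. Since $\sA_\infty$ is analytic in the unit disk (by uniform convergence of the series \eqref{eq:Ainfinity} inherited from the majorization \eqref{eq:majorization}) and not identically zero on the conditioning event, its zero set in the compact interval $[0,\rho]$ consists of finitely many points $0 < r_1 < \cdots < r_n < \rho$, with $\sA_\infty'(r_i) \neq 0$ for each $i$.

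Next I would choose separating neighborhoods around each zero. By continuity of $\sA_\infty'$, pick $\delta>0$ small enough that the closed intervals $I_i := [r_i - \delta, r_i + \delta]$ are pairwise disjoint, contained in $(0,\rho)$, and satisfy $|\sA_\infty'(r)| \geq c$ on $I_i$ for some constant $c>0$. Let $K := [0,\rho] \setminus \bigcup_i (r_i - \delta, r_i + \delta)$. Since $\sA_\infty$ has no zeros on $K$ and $K$ is compact, there exists $\eta>0$ with $|\sA_\infty(r)| \geq \eta$ for all $r \in K$.

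Now I would quantify the perturbation stability. Let $g$ be any $C^1$ function on $[0,\rho]$ with $\|g-\sA_\infty\|_{C^1([0,\rho])} < \min(\eta, c/2)$. On $K$, the triangle inequality gives $|g(r)| \geq |\sA_\infty(r)| - \|g-\sA_\infty\|_\infty > 0$, so $g$ has no zeros there. On each $I_i$, we have $|g'(r)| \geq |\sA_\infty'(r)| - \|g'-\sA_\infty'\|_\infty > c/2 > 0$, so $g$ is strictly monotone on $I_i$. Because $\sA_\infty(r_i - \delta)$ and $\sA_\infty(r_i + \delta)$ have opposite signs (as $\sA_\infty$ crosses zero transversally at $r_i$) and the perturbation is uniformly small, $g(r_i - \delta)$ and $g(r_i + \delta)$ retain those opposite signs; by the intermediate value theorem combined with monotonicity, $g$ has exactly one zero in $I_i$.

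Summing, any sufficiently $C^1$-small perturbation $g$ of $\sA_\infty$ has exactly $n$ zeros in $[0,\rho]$, matching the count for $\sA_\infty$. There is no real obstacle here beyond bookkeeping; the only point that needs care is handling the boundary endpoints $0$ and $\rho$, which is why I include the almost sure event that $\rho$ is not a zero (the origin is handled by the structure $\sA_\infty(r) = r^2 f(r)$ noted above, so one may work on $[\rho_0, \rho]$ for any small $\rho_0 > 0$ to avoid the cluster at the origin without changing the count in $(0,\rho)$).
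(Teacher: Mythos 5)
This is correct and is exactly the standard argument the paper relies on: the paper offers no proof of Lemma \ref{lemma:C1} beyond the phrase ``non-degeneracy implies the following lemma'' after invoking Bulinskaya's lemma, and your quantitative version (disjoint intervals around the finitely many simple zeros, a uniform lower bound on $|\sA_\infty'|$ there and on $|\sA_\infty|$ off them, then sign persistence plus monotonicity) is precisely the content it implicitly assumes. Your explicit attention to the endpoints --- in particular the double zero of $\sA_\infty$ at $r=0$, where an arbitrary $C^1$-small perturbation \emph{could} create a spurious zero in $(0,\rho_0)$ --- addresses a point the paper glosses over entirely; in the application one additionally uses that the perturbing function $(\sP_d(r)-r)/\e(d)$ also vanishes at $r=0$, so your restriction to $[\rho_0,\rho]$ is the right move.
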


Let us use the following notation for the $C^1$-norm.
$$\left\| \mathscr{F} \right\|_{C^1} := \sup_{0<r<\rho} |\sF(r)| + \sup_{0<r<\rho} |\sF'(r)| .$$

Let $\sP_d:\RR_+ \rightarrow \RR_+$ denote the Poincar\'e first return map along the positive $x$-axis of the vector field
\begin{equation}
F(x,y) = \binom{y + \e p_d(x,y)}{-x + \e q_d(x,y)}.
\end{equation}

In order to show that $N_d(\rho)$ converges to $X(\rho)$, by Lemma \ref{lemma:C1} it is enough to show that $(\sP_d(r) - r)/\e(d)$  approaches $\sA_\infty$ in the $C^1$-norm as $d \rightarrow \infty$, i.e.,
\be\label{eq:C1close2}
\lim_{d \rightarrow \infty}\left\| \frac{\sP_{d}(r) - r}{\e(d)} - \sA_\infty(r) \right\|_{C^1}=0.
\ee

Since Theorem \ref{thm:PPM} provides that the error term $E(r,\e)$ in the perturbation expansion \eqref{eq:pertexp} is uniformly bounded, we have that
$(\sP_{\infty,\e}(r)-r)/\e$ approaches $\sA_\infty$ in the $C^1$-norm as $\e \rightarrow 0$.
In particular, since $\e(d) \rightarrow 0$ as $d \rightarrow \infty$, we have
\be\label{eq:C1close}
\lim_{d \rightarrow \infty}\left\| \frac{\sP_{\infty,\e(d)}(r) - r}{\e(d)} - \sA_\infty(r) \right\|_{C^1}=0.
\ee

We have
\be\label{eq:reduced}
\left\| \frac{\sP_d(r) - r}{\e(d)} - \sA_\infty(r) \right\|_{C^1} \leq \left\| \frac{\sP_{\infty,\e(d)} - \sP_d}{\e(d)} \right\|_{C^1} + \left\| \frac{\sP_{\infty,\e(d)}(r) - r}{\e(d)} - \sA_\infty(r) \right\|_{C^1},\ee
and, as we have noted in \eqref{eq:C1close}, the second term on the right hand side of \eqref{eq:reduced} converges to zero as $d\rightarrow \infty$.
Hence, our immediate goal \eqref{eq:C1close2} is reduced to proving the following lemma.
\begin{lemma}\label{lemma:delta}
Let $\delta>0$ be arbitrary. 
For all $d$ sufficiently large, we have
\be\label{eq:suff} \left\| \frac{\sP_{\infty,\e(d)} - \sP_d}{\e(d)} \right\|_{C^1} < \delta. \ee
\end{lemma}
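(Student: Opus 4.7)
The plan is to treat this as a standard ODE perturbation problem: both $\sP_{\infty,\e(d)}$ and $\sP_d$ are obtained by integrating the perturbed rotation field for time approximately $2\pi$, starting from $r \in (0,\rho)$ on the positive $x$-axis. Since the vector fields $F_{\infty,\e} = ((y,-x) + \e(p_\infty,q_\infty))^\top$ and $F_d = ((y,-x) + \e(p_d,q_d))^\top$ differ only by $\e$ times $(p_\infty-p_d,\, q_\infty-q_d)$, and since the latter has exponentially small sup norm on $\DD_R \times \DD_R$ by the error estimate \eqref{eq:majortail}, a Gronwall argument will give a bound on the difference of flows that is $O(\e(d) \cdot R^{2d})$. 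Dividing by $\e(d)$ then kills the $\e$ factor and leaves $O(R^{2d}) \to 0$, independently of how slowly $\e(d)$ tends to zero.

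Concretely, first fix $\rho < R' < R < 1$ and restrict attention to initial conditions $r \in (0,\rho)$. For $d$ large enough (depending on $\rho$ and $R'$), and because $\e(d) \to 0$, the trajectories of both $F_{\infty,\e(d)}$ and $F_d$ starting at such $r$ remain within $\DD_{R'}$ for time $t \in [0, 3\pi]$ and return transversally to the positive $x$-axis near time $2\pi$. Writing the two flows as $\Phi_\infty^t$ and $\Phi_d^t$, the difference satisfies
\begin{equation}
\frac{d}{dt}(\Phi_\infty^t - \Phi_d^t) = \bigl(F_{\infty,\e}(\Phi_\infty^t) - F_{\infty,\e}(\Phi_d^t)\bigr) + \e\bigl((p_\infty - p_d, q_\infty - q_d)(\Phi_d^t)\bigr),
\end{equation}
with the first term controlled by a Lipschitz constant for $F_{\infty,\e}$ on $\DD_{R'}$ (which is $O(1)$ uniformly in $d$) and the second term bounded in magnitude by $C \e(d) R^{2d}$. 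Gronwall's inequality over the bounded time interval yields $\sup_{r \in (0,\rho)} |\Phi_\infty^t - \Phi_d^t| \leq C_1 \e(d) R^{2d}$, and this in turn gives the analogous bound on $|\sP_{\infty,\e(d)}(r) - \sP_d(r)|$ after accounting for the (smooth, $O(\e)$) first-return-time difference.

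For the $C^1$ part, I would carry out the same type of estimate for the variational equation. The derivatives $D\Phi_\infty^t, D\Phi_d^t$ satisfy the linearized flow equation $\dot J = DF(\Phi) J$, and the difference $D\Phi_\infty^t - D\Phi_d^t$ obeys
\begin{equation}
\frac{d}{dt}(D\Phi_\infty^t - D\Phi_d^t) = DF_{\infty,\e}(\Phi_\infty^t)(D\Phi_\infty^t - D\Phi_d^t) + \bigl(DF_{\infty,\e}(\Phi_\infty^t) - DF_d(\Phi_d^t)\bigr) D\Phi_d^t.
\end{equation}
Here I invoke Cauchy's estimate for the holomorphic extensions of $p_\infty - p_d$ and $q_\infty - q_d$: from the sup bound $C R^{2d}/(1-R)^2$ on $\DD_R \times \DD_R$ given by \eqref{eq:majortail}, the first-order partials are bounded on $\DD_{R'} \times \DD_{R'}$ by $C' R^{2d}$, so the inhomogeneous term is again $O(\e(d) R^{2d})$. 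A second Gronwall argument then yields $|D\Phi_\infty^t - D\Phi_d^t| \leq C_2 \e(d) R^{2d}$ uniformly for $r \in (0,\rho)$, and a routine implicit function argument transfers this estimate to $|(\sP_{\infty,\e(d)} - \sP_d)'(r)|$. Combining the two estimates,
\begin{equation}
\left\| \frac{\sP_{\infty,\e(d)} - \sP_d}{\e(d)} \right\|_{C^1} \leq C_3 R^{2d} \to 0, \quad \text{as } d \to \infty,
\end{equation}
which is stronger than the required bound by $\delta$.

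The main (minor) obstacle I foresee is the bookkeeping for the first-return-time comparison, to be sure that the difference in return times for the two flows also contributes only $O(\e(d) R^{2d})$ to $\sP_{\infty,\e(d)} - \sP_d$ and its derivative. This is handled by the implicit function theorem applied to the equation defining the return time (transversality of the positive $x$-axis to the flow, which holds since the unperturbed rotation crosses transversally and $\e(d)$ is small), so it introduces only constants depending on $\rho$ and $R'$, not on $d$.
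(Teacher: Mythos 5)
Your overall strategy is sound and, at heart, the same as the paper's: both arguments compare the two return maps by comparing the underlying flows over one revolution, using the exponentially small tail bound \eqref{eq:majortail} as the source term in a Gronwall-type estimate, and both obtain the derivative bound from the sup bound. The differences are technical but worth noting. The paper first passes to polar coordinates, writing each system as a scalar non-autonomous ODE $dr/d\theta=\e H(r,\theta)$; the return map is then literally the time-$2\pi$ solution map, so there is no first-return-time comparison at all, and the $C^1$ estimate is obtained in one line by complexifying $r$ and applying Cauchy estimates to the sup bound $\sup_{r\in\DD_R}|(\sP_{\infty,\e(d)}-\sP_d)/\e(d)|$ on a strictly larger disk (the Poincar\'e map is analytic in $r$). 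Your version stays in Cartesian coordinates, so you need the variational equation plus a second Gronwall argument for the derivative, and an implicit-function argument for the return time. Your route does buy something: an explicit quantitative rate $O(R^{2d})$ rather than the paper's softer $o(1)$ obtained via uniform equicontinuity of $H_{\infty,\e(d)}$.

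The one point you must repair is the claim that the return-time comparison ``introduces only constants depending on $\rho$ and $R'$, not on $d$.'' The $C^1$ norm here is a supremum over initial conditions $r\in(0,\rho)$, and the transversality of the flow to the positive $x$-axis degenerates as $r\to 0^+$: the crossing speed $\dot y=-x+\e q$ at the return point is of order $r$, so the implicit-function-theorem constant for the return time blows up like $1/r$. Your bound $|T_\infty(r)-T_d(r)|\le C\e(d)R^{2d}/r$ is therefore not uniform. The fix is available but requires an extra observation: $p_\infty-p_d$ and $q_\infty-q_d$ contain only monomials of degree $>d$, so on trajectories starting at radius $r$ the source term is $O(\e(d)\,r^{d+1})$ rather than merely $O(\e(d)R^{2d})$, and the factor $r^{d+1}$ absorbs the $1/r$ loss. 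Alternatively, and more cleanly, adopt the paper's polar-coordinate formulation, where $H_d(r,\theta)=\frac{(xp_d+yq_d)/r}{1+\e(xq_d-yp_d)/r^2}$ remains analytic down to $r=0$ (since $p_d,q_d$ have no constant term) and the issue never arises.
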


The following proof could most likely be condensed with an appropriate estimate, which surely must be known, for the variation of the Poincar\'e map under $C^1$-small perturbations of the underlying vector field.  However, we were unable to find a suitable reference, so we give the following mostly self-contained proof.

\begin{proof}[Proof of Lemma \ref{lemma:delta}]
In order to show this, we first change to polar coordinates $x=r \cos \theta, y = r \sin \theta$ and write our systems $\binom{\dot{x}}{\dot{y}} = \binom{y+\e(d) p_d(x,y)}{-x + \e(d) q_d(x,y)}$ and $\binom{\dot{x}}{\dot{y}} = \binom{y+\e p_\infty(x,y)}{-x + \e q_\infty(x,y)}$ as
\begin{equation}\label{eq:dH_d}
\frac{d r}{d \theta} = \e(d)H_d(r,\theta),
\end{equation}
and
\begin{equation}\label{eq:dH_inf}
\frac{d r}{d \theta} = \e H_{\infty,\e}(r,\theta),
\end{equation}
where
$$H_d(r,\theta) = \frac{(x p_d(x,y)+y  q_d(x,y))/r}{1+\e(d)(x q_d(x,y)-y p_d(x,y))/r^2},$$
and
$$H_{\infty,\e}(r,\theta) = \frac{(x p_\infty(x,y)+y  q_\infty(x,y))/r}{1+\e (x q_\infty(x,y)-y p_\infty(x,y))/r^2}.$$

Fix an initial condition $r(0) = r_0$. We will use $r_d(\theta)$ and $r_\infty(\theta)$ to denote the solutions to the systems \eqref{eq:dH_d}
and \eqref{eq:dH_inf}, respectively, with the same initial condition $r(0)=r_0$.

The estimates \eqref{eq:majorization}, \eqref{eq:majortail} imply that
given any $\delta_0$, we have for all $d$ sufficiently large
\be \label{eq:delta0}
\sup_{(r,\theta) \in \DD_R \times [0,2\pi]} \left| H_{\infty,\e(d)}(r,\theta) - H_d(r,\theta) \right| < \delta_0.
\ee
We also have from the estimates \eqref{eq:majortail} that there exists $M>0$ such that
\be\label{eq:M}
\sup_{(r,\theta) \in \DD_R \times [0,2\pi]} |H_d(r,\theta)| \leq M , \quad \sup_{(r,\theta) \in \DD_R \times [0,2\pi]} |H_{\infty,\e(d)}(r,\theta)| \leq M.
\ee

We have
\be 
\frac{d}{d \theta} \left( r_\infty(\theta)-r_d(\theta) \right)
= \e(d) \left[ H_{\infty,\e(d)}(r_\infty(\theta),\theta)- H_d(r_d(\theta),\theta) \right],
\ee
which implies
\be \label{eq:intEst}
|r_\infty(\theta)-r_d(\theta)| \leq \e(d) \int_{0}^{\theta} \left| H_{\infty,\e(d)}(r_\infty(\theta),\theta)- H_d(r_d(\theta),\theta) \right| d\theta.
\ee

Applying \eqref{eq:M} to estimate \eqref{eq:intEst} we obtain
\be \label{eq:firstEst}
|r_d(\theta) - r_\infty(\theta)| < 2 M \e(d).
\ee

We have $\sP_{\infty,\e(d)}(r_0) = r_\infty(2\pi)$, $\sP_d(r_0) = r_d(2\pi)$,
where recall $r_\infty, r_d$ are the solutions to the systems \eqref{eq:dH_inf}, \eqref{eq:dH_d} with initial condition $r_\infty(0) = r_d(0)=r_0$.
Using this while setting $\theta = 2\pi$ in \eqref{eq:intEst} we have
\be \label{eq:intEst2pi}
|\sP_{\infty,\e(d)}(r_0)-\sP_d(r_0)| \leq \e(d) \int_{0}^{2\pi} \underbrace{\left| H_{\infty,\e(d)}(r_\infty(\theta),\theta)- H_d(r_d(\theta),\theta) \right|}_{(*)} d\theta.
\ee
The above integrand $(*)$ satisfies
\be\label{eq:*}
(*) \leq \left| H_d(r_d(\theta),\theta) - H_{\infty,\e(d)}(r_d(\theta),\theta) \right| + \left| H_{\infty,\e(d)}(r_d(\theta),\theta)-H_{\infty,\e(d)}(r_\infty(\theta),\theta)\right|.
\ee 
We have from \eqref{eq:delta0}
\be\label{eq:*part1}
| H_d(r_d(\theta),\theta) - H_{\infty,\e(d)}(r_d(\theta),\theta) | < \delta_0, \quad \theta \in [0,2\pi],
\ee
and we also have for all $d$ sufficiently large
\be\label{eq:*part2}
|H_{\infty,\e(d)}(r_d(\theta),\theta)-H_{\infty,\e(d)}(r_\infty(\theta),\theta)| <  \delta_0, \quad \theta \in [0,2\pi].
\ee
The latter follows from the
uniform equicontinuity of the sequence $H_{\infty,\e(d)}$
since we have as stated in \eqref{eq:firstEst}
that $|r_\infty(\theta)-r_d(\theta)|$
is arbitrarily small for $d$ sufficiently large.

Using \eqref{eq:*}, \eqref{eq:*part1}, \eqref{eq:*part2} to estimate \eqref{eq:intEst2pi}, we obtain
\be
|\sP_{\infty,\e(d)}(r_0)-\sP_d(r_0)| \leq \e(d) 4\pi \delta_0.
\ee

This gives
\be\label{eq:supPoincare}
\sup_{r\in \DD_R} \left| \frac{\sP_{\infty,\e(d)}(r)-\sP_d(r)}{\e(d)} \right| < 4\pi \delta_0,
\ee
and using Cauchy estimates \cite[Ch. 4, Sec. 2.3]{Ahlfors} for the derivatives
we obtain
\be
\sup_{r\in \DD_\rho} \left| \frac{\sP'_{\infty,\e(d)}(r)-\sP'_d(r)}{\e(d)} \right| < \frac{4\pi}{R-\rho} \delta_0,
\ee
and together with \eqref{eq:supPoincare} this gives
\be
\left\| \frac{\sP_{\infty,\e(d)}(r)-\sP_d(r)}{\e(d)} \right\|_{C^1} < 4\pi \left( 1+ \frac{1}{R-\rho}\right) \delta_0.
\ee
Since $\delta_0$ was arbitrary, this verifies
\eqref{eq:suff} and concludes the proof of the lemma.
\end{proof}

Lemma \ref{lemma:delta} along with
\eqref{eq:reduced} and \eqref{eq:C1close}
implies \eqref{eq:C1close2}.
Hence, almost surely, we have for all $d$ sufficiently large, $(\sP_d(r)-r)/\e(d)$
lies  within the $C^1$-neighborhood of stability of $\sA_\infty$ provided by Lemma \ref{lemma:C1}, and we conclude that the fixed points of $\sP_d$ in the interval $(0,\rho)$ are in one-to-one correspondence with the zeros of $\sA_\infty$.
This verifies the desired almost sure convergence of $N_d(\rho)$ to $X(\rho)$.

Finally, we verify the statements in the theorem concerning the random coefficients $\zeta_m$ of the series $\sA_\infty(r)$.
From \eqref{eq:zetam} we see that each $\zeta_m$ is a linear combination 
\be
\zeta_m =  \sum_{j+k=2m+1} a_{j,k}\int_{0}^{2\pi}    (\cos(\theta))^{j+1}(\sin(\theta))^k   d\theta + b_{j,k}\int_{0}^{2\pi}    (\cos(\theta))^j(\sin(\theta))^{k+1}  d\theta
\ee
of the random variables $a_{j,k}, b_{j,k}$, and as such each $\zeta_m$ has mean zero.
It is easy to see from the range of the indices of the sum in \eqref{eq:zetam} that $\zeta_m$ are independent since the $a_{j,k},b_{j,k}$ appearing in \eqref{eq:zetam} for distinct $m$ do not overlap.
Recalling that $\EE a_{j,k}^2 = \EE b_{j,k}^2 = 1$ for each $j,k$, the variance $\sigma_m^2 = \EE \zeta_m^2 $ of $\zeta_m$ is given by
\be
\label{eq:sigma_m}
\sigma_m^2 = \sum_{k=0}^{2m+1} \left( \int_{0}^{2\pi}    (\cos(\theta))^{2m+2-k}(\sin(\theta))^k   d\theta\right)^2  +  \left( \int_{0}^{2\pi}    (\cos(\theta))^{2m+1-k}(\sin(\theta))^{k+1}   d\theta\right)^2 .
\ee
We use the identities \cite{Grad}
$$ \int_{0}^{2\pi}    (\cos(\theta))^{2m+2-k}(\sin(\theta))^k   d\theta =
\begin{cases} 
2\pi \frac{(2m-k+1)!!(k-1)!!}{(2m+2)!!} &, \quad k \text{ even} \\
0 &,  \quad k \text{ odd}
\end{cases},$$
$$ \int_{0}^{2\pi}    (\cos(\theta))^{2m+1-k}(\sin(\theta))^{k+1}   d\theta =
\begin{cases} 
 0 &, \quad k \text{ even} \\
2\pi \frac{(2m-k)!!(k)!!}{(2m+2)!!} &,  \quad k \text{ odd}
\end{cases},$$
in order to obtain
\be\sigma_m^2 = 4\pi^2 \sum_{\ell=0}^m
\left(\frac{(2m-2\ell+1)!!(2\ell-1)!!}{(2m+2)!!}\right)^2 + 
 \left(\frac{(2m-2\ell-1)!!(2\ell+1)!!}{(2m+2)!!}\right)^2 .
\ee
The asymptotic behavior as $m\rightarrow \infty$
is dominated by just two terms (one for $\ell=0$ and one for $\ell=m$), and
we have
\be\label{eq:asymvar}
\sigma_m^2 \sim 8\pi^2 \left( \frac{(2m+1)!!}{(2m+2)!!}\right)^2 \sim  8\pi m^{-1},
\ee
as stated in the theorem.
\end{proof}

\begin{proof}[Proof of Theorem \ref{thm:addendum}]
We shall reduce this problem to one where we can apply the results from \cite{Brudnyi1}.  Some lines below closely follow the proof of \cite[Thm. A]{Brudnyi1}.
We have 
\be
 p(x,y) = \sum_{1 \leq j+k \leq d} a_{j,k} x^j y^k, \quad 
q(x,y) = \sum_{1 \leq j+k \leq d} b_{j,k} x^j y^k,
\ee
with the vector of all coefficients
sampled uniformly from the $d(d+3)$-dimensional unit ball $\displaystyle \left\{ \sum_{1 \leq j+k \leq d} (a_{j,k})^2 + (b_{j,k})^2 \leq 1 \right\}$.
We change variables by a scaling
$u=x/(2\rho)$, $v=y/(2\rho)$.
In these coordinates, the system
\begin{equation}\label{eq:xy}
\binom{\xdot}{\ydot} = \binom{-y + \e p(x,y)}{x + \e q(x,y)}
\end{equation}
becomes
\begin{equation}\label{eq:uv}
\binom{\udot}{\vdot} = \binom{- v + \e \hp(u,v)}{ u + \e \hq(u,v)},
\end{equation}
where
$\hp(u,v) = 2 \rho \cdot p(2\rho u, 2 \rho v)$,
and $\hq(u,v) = 2 \rho \cdot  q(2\rho u, 2 \rho v)$.

We are concerned with limit cycles
situated in the disk $\DD_{1/2}$ of the $(u,v)$-plane.

Changing to polar coordinates $u=r \cos \phi, v = r \sin \phi$, we obtain
\begin{equation}\label{eq:dH}
\frac{\p r}{\p \phi} = H(r,\phi) r, \quad H(r,\phi) = \frac{\e(u\hp(u,v)+v \hq(u,v))/r^2}{1+\e(u\hq(u,v)-v\hp(u,v))/r^2}.
\end{equation}
We complexify the radial coordinate $r$,
and let $U = \DD \times [0,2\pi]$.
Then a simple modification of the estimates in \cite[p. 236]{Brudnyi1} gives
$$ \sup_{U} \left| \frac{\hp(u,v)}{r} \right| \leq (2\rho)^d \sqrt{\sum (a_{j,k})^2} \sqrt{d} \leq (2\rho)^d \sqrt{d} ,$$
$$ \sup_{U} \left| \frac{\hq(u,v)}{r} \right| \leq (2\rho)^d \sqrt{\sum (b_{j,k})^2} \sqrt{d}
\leq (2\rho)^d \sqrt{d},$$
where we used $\sum (a_{j,k})^2 + (b_{j,k})^2 \leq 1$.
These estimates imply
\begin{align*}
\sup_{U} | H(r,\phi)| &\leq \frac{\e (2\rho)^d\sqrt{d}}{1-\e (2\rho)^d \sqrt{d}} \\
&\leq 3 \sqrt{d} (2\rho)^d \e.
\end{align*}
Since $\e \leq \frac{(2\rho)^{-d(d+3)}}{40\pi \sqrt{d}}$,
we have
$$ \sup_{U} | H(r,\phi)| \leq \frac{3}{40\pi},$$
so that the hypothesis of \cite[Prop. 3.1]{Brudnyi1} is satisfied
and we conclude that
the Poincar\'e map $p_\mu$ as in \cite[p. 237]{Brudnyi1}
along with the functions
$$g_\mu(r) = \frac{p_\mu(r)}{r} - 1, \quad h_\mu(r) = \frac{40 \sqrt{d}}{\sqrt{2}}g_\mu(r) $$
are analytic in $\DD_{3/4}$
and depend anaytically on the vector $\mu$ of coefficients throughout the ball
in $\CC^{d(d+3)}$ of radius $2N$, with $N = \frac{1}{40\pi \sqrt{d}}$.
The rest of the proof, consisting of an application of \cite[Thm. 2.3]{Brudnyi1} now 
follows exactly as in \cite[p. 237]{Brudnyi1}.
\end{proof}

\section{Infinitesimal perturbations: Proofs of Theorems \ref{thm:sqrt} and \ref{thm:powerlaw}}\label{sec:infinitesimal}

Before proving Theorem \ref{thm:sqrt}, we state several lemmas that hold more generally for perturbed Hamiltonian systems.

Consider the system
\be\label{eq:systemKostpertHam}
\begin{cases}
\dot{x} = \partial_x H(x,y) + \e p(x,y) \\
\dot{y} = -\partial_y H(x,y) + \e q(x,y)
\end{cases},
\ee
 with $H$ a generic (fixed) polynomial and $p,q$ Kostlan random polynomials of degree $d$.
 Let $A$ be a period annulus of the unperturbed Hamiltonian system ($\e=0$), and let $N_{d,\e}(A)$ denote the number of limit cycles in $A$.

\begin{lemma}\label{lemma:Nd}
Fix a period annulus $A$ of the Hamiltonian system.  As $\e \rightarrow 0^+$ the number $N_{d,\e}(A)$ of limit cycles in $A$ converges almost surely to the random variable $N_d(A)$ that counts the number of zeros of the first Melnikov function for the system \eqref{eq:systemKostpertHam} given by
\be\label{eq:firstMelnikov}
\sA(t) = \int_{C_t} p dy - q dx,
\ee
where $C_t = \{H(x,y)=t\}$.
\end{lemma}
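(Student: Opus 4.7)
The plan is to mimic the structure of the proof of Theorem \ref{thm:as}: reduce limit-cycle counting in $A$ to zero counting for the displacement function of the Poincar\'e first-return map, compare that displacement to $\e \sA$ using Theorem \ref{thm:PPM}, and conclude via $C^1$-stability of non-degenerate zeros. Parametrize a transversal $\Sigma \subset A$ to the Hamiltonian flow by the Hamiltonian value $h \in (a,b)$, where $(a,b)$ is the range of $H$ on $A$, and let $\sP_\e : (a,b) \to (a,b)$ denote the first-return map of \eqref{eq:systemKostpertHam}. Limit cycles of \eqref{eq:systemKostpertHam} contained in $A$ correspond bijectively to zeros of the displacement function $D_\e(h) := \sP_\e(h) - h$ on $(a,b)$.

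By Theorem \ref{thm:PPM} we have $D_\e(h) = \e \sA(h) + \e^2 E(h,\e)$ with $E$ analytic in $(h,\e)$ and uniformly bounded on any compact subinterval $[a',b'] \subset (a,b)$. Cauchy estimates applied in the variable $h$ lift the uniform $C^0$-bound on $E$ to a uniform $C^1$-bound on slightly smaller subintervals, so $\e^{-1} D_\e \to \sA$ in the $C^1$-norm on every compact subinterval of $(a,b)$ as $\e \to 0^+$. Since $\sA$ is linear in the Gaussian coefficients of $p$ and $q$, it is a Gaussian real-analytic function of $h$ whose joint distribution $(\sA(h),\sA'(h))$ is a non-degenerate Gaussian at each $h \in (a,b)$; Bulinskaya's lemma \cite[Prop.~1.20]{AzaisWscheborbook} then shows that, almost surely, every zero of $\sA$ on $(a,b)$ is simple (cf.\ Lemma \ref{lemma:C1}). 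The standard stability of simple zeros under $C^1$-small perturbations now yields $N_{d,\e}(A') \to N_d(A')$ almost surely on each compact subannulus $A' \Subset A$.

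The main obstacle will be upgrading this compact-subannulus convergence to convergence on all of $A$, since zeros of $\sA$ could in principle accumulate at $\partial A$ (where the unperturbed orbits pinch to an equilibrium or separatrix, or escape to infinity). My plan is to exhaust $A$ by a nested sequence $A_n' \nearrow A$ and argue that $N_d(A_n')$ is almost surely eventually constant. Since $\sA$ is real-analytic on the open interval $(a,b)$ (the integrand of \eqref{eq:firstMelnikov} is polynomial and the ovals $C_h$ vary real-analytically with $h$ on the period annulus), its zeros are isolated, so accumulation can only occur at the endpoints; ruling this out requires a model-dependent growth estimate on $\sA$ near $\partial A$ together with a Bulinskaya-type check that $\sA$ does not vanish on $\partial A$ almost surely, and this is the only genuinely nontrivial step in the argument. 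Combined with the compact-interval convergence, this will give $N_{d,\e}(A) \to N_d(A)$ almost surely, with $N_d(A)$ finite almost surely.
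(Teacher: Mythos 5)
Your proposal follows essentially the same route as the paper's proof, which is only a few lines long: identify the rescaled displacement function of the Poincar\'e return map with $\sA$ plus a uniformly bounded error via Theorem \ref{thm:PPM}, invoke Bulinskaya's lemma to get almost-sure non-degeneracy of the zeros of $\sA$, and conclude by stability of simple zeros. You supply considerably more detail than the paper does (the explicit bijection between limit cycles in $A$ and fixed points of the return map, and the $C^1$ upgrade of the error bound via Cauchy estimates), and that part is sound. The one step you leave open --- ruling out accumulation of zeros of $\sA$ at $\p A$ so that the exhaustion by compact subannuli closes --- is not addressed in the paper's proof either, but it is less delicate than you suggest and does not require a model-dependent growth estimate: $\sA$ is an Abelian integral of a polynomial one-form over the level ovals of a polynomial Hamiltonian, and such an integral (when not identically zero, which holds almost surely since $\sA$ is a non-degenerate Gaussian function) has finitely many zeros on each period annulus by the Varchenko--Khovanskii finiteness theorem; in the concrete applications of Section \ref{sec:infinitesimal} one sees this even more directly, since there $\sA(t)$ extends to a polynomial, respectively an analytic function on a neighborhood of the closed interval. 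Once $N_d(A)<\infty$ is known, all zeros lie in a single compact subinterval almost surely and your compact-subannulus convergence already finishes the argument, so the only change I would make is to replace your final paragraph with this finiteness observation.
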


\begin{proof}[Proof of Lemma \ref{lemma:Nd}]
Since $p,q$ are Gaussian random polynomials
and the curves $C_t$ provide a smooth foliation of $A$, the random function $\sA$ is a smooth (in fact analytic) non-degenerate Gaussian random function, and the probability density of its point evaluations $\sA(t)$ are uniformly bounded.
Bulinskaya's Lemma \cite[Prop. 1.20]{AzaisWscheborbook} then implies
that almost surely the zeros of $\sA$ are all non-degenerate;
the lemma now follows from an application of Theorem \ref{thm:PPM}.
\end{proof}

\begin{lemma}\label{lemma:EK}
The expectation $\EE N_d(A) $ of the number of zeros of $\sA$ in $A$ is given by
\begin{equation}\label{eq:EK}
\EE N_d(\rho) = \frac{1}{\pi} \int_a^b \sqrt{ \frac{\partial^2}{\partial r \partial t} \log(\sK(r,t)) \rvert_{r=t=\tau} } \, d\tau,
\end{equation}
where $a$ and $b$ are chosen so that $A$ is a component of the set $\{ a < H(x,y) < b \}$ and where
\be\label{eq:2pt}
\sK(r,t) = \EE \sA(r) \sA(t)
\ee
denotes the two-point correlation function of $\sA$.
\end{lemma}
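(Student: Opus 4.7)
The plan is to apply the Edelman--Kostlan form \eqref{eq:EKform} of the Kac--Rice formula directly to the random function $\sA$ on the interval $I = (a,b)$ parametrizing the period annulus $A$. By Lemma \ref{lemma:Nd}, the limit $N_d(A)$ is almost surely equal to the number of zeros of $\sA$ on $(a,b)$, so it suffices to verify that $\sA$ satisfies the four hypotheses of Theorem \ref{thm:KR} and then invoke \eqref{eq:EKform}.

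First I would check the hypotheses in order. Gaussianity (i) is immediate: since $\sA(t)$ is a contour integral of $p\, dy - q\, dx$ along $C_t$, it is a continuous linear functional of the Gaussian coefficients of $p$ and $q$. Smoothness (ii) follows from the fact that $H,p,q$ are polynomials and, within a period annulus, the level curves $C_t$ form a smooth (in fact analytic) foliation; differentiating under the integral sign shows $\sA$ is analytic on $(a,b)$. For (iii), nondegeneracy of the distribution of $\sA(t)$ amounts to $\sK(t,t) = \mathrm{Var}(\sA(t)) > 0$, which can be seen by identifying a coefficient of $p$ or $q$ whose contribution to $\sA(t)$ is nonvanishing (e.g.\ writing $\sA(t)$ as a Gaussian linear combination in the orthonormal monomial basis and noting that at least one basis integral, such as $\oint_{C_t} x\, dy$ arising from the coefficient of $x$ in $p$, equals the enclosed area and is therefore nonzero). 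Finally, (iv) follows from Bulinskaya's Lemma applied to $\sA$, exactly as invoked in the proof of Lemma \ref{lemma:Nd}, since the density of $\sA(t)$ is uniformly bounded.

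With (i)--(iv) confirmed, formula \eqref{eq:EKform} applied to $\sA$ on $I = (a,b)$ gives directly
\begin{equation*}
\EE N_d(A) = \frac{1}{\pi}\int_a^b \sqrt{\frac{\partial^2}{\partial r \partial t}\log \sK(r,t)\Big|_{r=t=\tau}}\, d\tau,
\end{equation*}
which is the claimed identity. The main obstacle, such as it is, lies in step (iii): one must confirm that $\sK(t,t)>0$ throughout $(a,b)$ so that $\log \sK$ is well-defined near the diagonal. This is handled by the nontriviality argument above, noting that the integrals defining the covariance kernel involve sums of squared moments of the basis functions against the measure on $C_t$, and at least one such moment is strictly positive whenever $C_t$ is a nondegenerate closed curve. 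Beyond this verification, the derivation is a direct application of the standard Kac--Rice machinery recalled in Section \ref{sec:prelim}.
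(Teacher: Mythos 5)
Your proposal is correct and follows essentially the same route as the paper, which simply notes that $\sA$ satisfies the hypotheses of Theorem \ref{thm:KR} and applies the Edelman--Kostlan form \eqref{eq:EKform}; your explicit verification of conditions (i)--(iv), in particular the positivity of $\sK(t,t)$ via the nonvanishing of $\oint_{C_t} x\, dy$, supplies details the paper leaves implicit but does not change the argument.
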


\begin{proof}[Proof of Lemma \ref{lemma:EK}]
Since $\sA$ satisfies the conditions of Theorem \ref{thm:KR}, this follows from an application of Edelman and Kostlan's formulation \eqref{eq:EKform} of the Kac-Rice formula.
\end{proof}

\begin{lemma}\label{lemma:twopt}
The two-point correlation function of $\sA$ defined in \eqref{eq:2pt} satisfies
\be \label{eq:kernel}
\sK(r,t)= \int_{C_r} \int_{C_t}  (1+x_1 x_2 +y_1 y_2)^d \left[ dy_1 dy_2 + dx_1 dx_2 \right].
\ee
\end{lemma}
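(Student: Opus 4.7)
The plan is to expand the definition of $\sK(r,t)$, push the expectation through the line integrals by Fubini, eliminate the cross terms using independence of $p$ and $q$, and then recognize the remaining diagonal terms as the Kostlan two-point correlation function recorded in \eqref{eq:2ptK}.

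Concretely, I would start by writing
\begin{equation}
\sK(r,t) = \EE\left[ \int_{C_r}\bigl(p(x_1,y_1)\,dy_1 - q(x_1,y_1)\,dx_1\bigr) \cdot \int_{C_t}\bigl(p(x_2,y_2)\,dy_2 - q(x_2,y_2)\,dx_2\bigr)\right],
\end{equation}
and then, since the integrands are smooth (in fact polynomial) in the coefficients of $p,q$ and the integration is over compact curves, apply Fubini's theorem to interchange the expectation with the iterated line integral. This reduces the claim to computing the four pointwise expectations
$$\EE p(v_1) p(v_2),\ \EE q(v_1)q(v_2),\ \EE p(v_1)q(v_2),\ \EE q(v_1)p(v_2),$$
where $v_i = (x_i,y_i)$.

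Next I would use that $p$ and $q$ are sampled independently, which makes the two cross expectations vanish and leaves only the two diagonal contributions
$$\sK(r,t) = \int_{C_r}\int_{C_t}\Bigl[\EE p(v_1)p(v_2)\,dy_1\,dy_2 + \EE q(v_1)q(v_2)\,dx_1\,dx_2\Bigr].$$
Since $p$ and $q$ are identically distributed Kostlan polynomials of degree $d$, both diagonal expectations coincide with the Kostlan covariance kernel $K(v_1,v_2) = (1+x_1x_2+y_1y_2)^d$ recorded in \eqref{eq:2ptK}. Substituting this identification yields the claimed formula
\begin{equation}
\sK(r,t) = \int_{C_r}\int_{C_t}(1+x_1x_2+y_1y_2)^d\bigl[dy_1\,dy_2 + dx_1\,dx_2\bigr].
\end{equation}

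There is no genuine obstacle here; the only point requiring a brief remark is the use of Fubini, which is justified because, for each realization, the integrand is a continuous function on the compact product $C_r \times C_t$ and the integrand depends linearly (hence with bounded moments of every order) on the Gaussian coefficients of $p$ and $q$. Independence of $p$ and $q$ and the explicit form \eqref{eq:2ptK} then do all the work.
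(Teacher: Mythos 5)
Your proposal is correct and follows essentially the same route as the paper's proof: expand the product of line integrals, interchange expectation with the integrals, drop the cross terms by independence of $p$ and $q$, and substitute the Kostlan covariance kernel from \eqref{eq:2ptK}. The only cosmetic difference is that you explicitly invoke Fubini where the paper simply cites linearity of expectation; your added justification is fine but not needed beyond that.
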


\begin{proof}[Proof of Lemma \ref{lemma:twopt}]

Let us write
\be\label{eq:ArAt}
\sA(r) = \int_{C_r} p(v_1) dy_1 - q(v_1)dx_1, \quad \sA(t) = \int_{C_r} p(v_2) dy_2 - q(v_2)dx_2,
\ee 
with $v_1=(x_1,y_1), v_2=(x_2,y_2)$.
Using \eqref{eq:ArAt}
and linearity of expectation,
the expectation in 
\eqref{eq:2pt} can be expressed as
\be
\int_{C_r} \int_{C_t} \EE \left[ p(v_1) p(v_2) dy_1 dy_2 - p(v_1)q(v_2)dy_1 dx_2 - q(v_1)p(v_2)dx_1 dy_2 + q(v_1)q(v_2) dx_1 dx_2 \right],
\ee
and since the pointwise evaluations
of $p, q$ are centered Gaussians independent of eachother,
we have, for each $v_1=(x_1,y_1), v_2=(x_2,y_2)$,
$$\EE p(v_1)q(v_2) = \EE q(v_1)p(v_2) = 0,$$
which leads to the following simplified expression for the two-point correlation.
\be
 \sK(r,t) =  \int_{C_r} \int_{C_t} \left[ \EE p(v_1) p(v_2) dy_1 dy_2 + \EE q(v_1)q(v_2) dx_1 dx_2 \right].
\ee
We write this as
\be\label{eq:covar}
     \sK(r,t) =  \int_{C_r} \int_{C_t} \left[ K(v_1, v_2) dy_1 dy_2 + K(v_1,v_2) dx_1 dx_2 \right],
\ee
where $K(v_1,v_2) = \EE p(v_1) p(v_2)$ denotes the covariance kernel for the Kostlan ensemble.
Finally, substitute in \eqref{eq:covar} the known expression $$ K(v_1,v_2) = (1+x_1 x_2 + y_1 y_2)^d$$
from \eqref{eq:2ptK}
for the covariance kernel of the Kostlan ensemble.
This gives \eqref{eq:kernel} and concludes the proof of the lemma.
\end{proof}

\begin{proof}[Proof of Theorem \ref{thm:sqrt}]
As in the statement of the theorem, $N_{d,\e}(\rho)$ denotes the number of limit cycles in $\DD_\rho$ of the system
\be\label{eq:systemKostpert}
\begin{cases}
\dot{x} = y + \e p(x,y) \\
\dot{y} = -x + \e q(x,y)
\end{cases},
\ee
 with $p,q$ Kostlan random polynomials of degree $d$.
 By Lemma \ref{lemma:Nd}, as $\e \rightarrow 0^+$ $N_{d,\e}(\rho)$ converges almost surely to the number $N_d(\rho)$ of zeros of $\sA$ in $(0,\rho)$ with
 $$\sA(r) = \int_{C_r} p dx - q dy, \quad C_r = \{ x^2 + y^2 = r^2 \} .$$



Let $\sK(r,t) =  \EE \sA(r) \sA(t)$ denote the two-point correlation function of $\sA$ as in \eqref{eq:2pt}.

We parameterize $C_r = \{ x^2 + y^2 = r^2 \}$ by $(x_1,y_1) = (r \cos \theta_1, r \sin \theta_1)$ and
$C_t$ by $(x_2,y_2) = (t \cos \theta_2, t \sin \theta_2)$.
In terms of this parameterization we can write the integral \eqref{eq:kernel} as
$$    \sK(r,t) = \int_{0}^{2\pi} \int_{0}^{2\pi}  (1+rt\cos(\theta_1-\theta_2))^d r t \cos(\theta_1-\theta_2) d\theta_1 d\theta_2.
$$
Changing variables in the inside integral with $u=\theta_1-\theta_2$, $du=d\theta_1$ leads to an integrand independent of $\theta_2$, and we obtain
\begin{equation}\label{eq:du}
   \sK(r,t) = 2 \pi \int_{0}^{2\pi}  (1+rt\cos(u))^d r t \cos(u) du.
\end{equation}

Applying Laplace's method \cite[Sec. 4.2]{deBruijn} for asymptotic evaluation of the integral \eqref{eq:du}, we find
\be\label{eq:Laplace}
    \sK(r,t) = 2\pi (1+rt)^d \sqrt{\frac{2\pi rt (rt+1)}{d}}(1+E_d(r,t)), \quad \text{as } d \rightarrow \infty,
\ee
where $E_d(r,t) = O(d^{-1})$.

From \eqref{eq:Laplace} we obtain
\be\label{eq:1/dasymp}
\frac{\log \sK(r,t)}{d} = \log(1+rt) + \frac{1}{d} \log 2\pi \sqrt{\frac{2\pi rt(1+rt)}{d}} + \frac{1}{d}\log(1+E_d(r,t)).
\ee 
The functions $\frac{\log \sK(r,t)}{d}$
are analytic in a complex neighborhood $U$ of $(r,t) \in [0,\infty) \times [0,\infty)$
and converge to $\log(1+rt)$ as $d \rightarrow \infty$.
The convergence is uniform on compact subsets of $U$. This justifies, by way of Cauchy estimates, differentiation of the asymptotic \eqref{eq:1/dasymp} to obtain
\be \label{eq:asymp}
\lim_{d \rightarrow \infty} \frac{1}{d}\frac{\partial^2}{\partial r \partial t}\log \sK(r,t) \rvert_{r=t=\tau} = \frac{1}{(1+\tau^2)^2},
\ee
where the convergence is uniform for $\tau \in [0,\rho]$.

Applying this to \eqref{eq:EK} gives
\be\label{eq:sqrtLaw}
     \EE N_d(\rho) \sim \frac{\sqrt{d}}{\pi} \int_0^\rho \frac{1}{(1+\tau^2)} \, d\tau, \quad \text{as } d \rightarrow \infty,
\ee
and computing the integral in \eqref{eq:sqrtLaw} we find
\be
     \EE N_d(\rho) \sim \frac{\sqrt{d}}{\pi} \arctan(\rho), \quad \text{as } d \rightarrow \infty,
\ee
as desired. This concludes the proof of the theorem.
\end{proof}

\begin{proof}[Proof of Theorem \ref{thm:powerlaw}]
The initial step follows the above proof of Theorem \ref{thm:sqrt}. Let $\sA$ denote the first Melnikov function of the system $\binom{\xdot}{\ydot} = F(x,y)$, where recall 
$$\displaystyle F(x,y) = \binom{y+\e p(x,y)}{-x + \e q(x,y)} $$ with
\be
 p(x,y) = \sum_{m=1}^d \sum_{j+k = m} c_m a_{j,k} x^j y^k, \quad
q(x,y) = \sum_{m=1}^d \sum_{j+k = m} c_m b_{j,k} x^j y^k,
\ee
where the deterministic weights $c_m$ satisfy \eqref{eq:powerlawvar}.

From another application of 
Lemma \ref{lemma:Nd} we have that $N_{d,\e}$
converges almost surely to $N_d$ as $\e \rightarrow 0^+$, where $N_d$ denotes the number of zeros of $\sA$.

Using polar coordinates and integrating term by term, we obtain
\begin{align*}
    \sA(r) &= \int_{x^2+y^2=r^2} p dy - q dx \\
    &= \int_{0}^{2\pi} p(r \cos(\theta),r \sin(\theta)) r\cos(\theta) d\theta +  q(r \cos(\theta),r \sin(\theta)) r\sin(\theta) d\theta \\
    &= \sum_{m=0}^{\lfloor (d-1)/2 \rfloor} c_{2m+1} \zeta_m r^{2m+2},
\end{align*}
where
\begin{align}
\zeta_m &= \int_{0}^{2\pi}   \sum_{j+k=2m+1} (a_{j,k} \cos(\theta) + b_{j,k} \sin(\theta) ) (\cos(\theta))^j(\sin(\theta))^k   d\theta \\
 &=  \sum_{j+k=2m+1} a_{j,k}\int_{0}^{2\pi}    (\cos(\theta))^{j+1}(\sin(\theta))^k   d\theta + b_{j,k}\int_{0}^{2\pi}    (\cos(\theta))^j(\sin(\theta))^{k+1}  d\theta.
\end{align}
Recalling that $\EE a_{j,k}^2 = \EE b_{j,k}^2 = 1$ for each $j,k$, we find that the variance $\sigma_m^2 = \EE \zeta_m^2 $ of $\zeta_m$ satisfies the asymptotic
$$\sigma_m^2 \sim 8\pi^2 \left( \frac{(2m+1)!!}{(2m+2)!!}\right)^2 \sim  8\pi m^{-1},$$
which follows from the same steps that led to \eqref{eq:asymvar}.
Let us write $\zeta_m = \sigma_m \hat{\zeta}_m$, where $\hat{\zeta}_m$ has mean zero, unit variance, and uniformly bounded moments, and
\be
 \sA(r) = \sum_{m=0}^{\lfloor (d-1)/2 \rfloor} c_{2m+1} \sigma_m \hat{\zeta}_m r^{2m+2}.
\ee
Letting
\be
f(s) = \sum_{m=0}^{\lfloor (d-1)/2 \rfloor} c_{2m+1} \sigma_m \hat{\zeta}_m s^{m},
\ee
we have $\sA(r) = r^2 f(r^2)$.
Note that $\sA$ and $f$ have the same number of zeros in $(0,\infty)$.
Since $c_{2m+1}^2 \sigma_m^2 \sim 8\pi 2^\gamma m^{\gamma-1}$ as $m \rightarrow \infty$, we can apply \cite[Corollary 1.6]{DoVu} to conclude
\be 
\EE N_d \sim \frac{1+\sqrt{\gamma}}{2 \pi} \log d, \quad \text{as } d \rightarrow \infty,
\ee
as desired.
\end{proof}

\section{Future Directions and Open Problems}\label{sec:concl}

\subsection{The Real Fubini-Study Ensemble}
The Kostlan model is sometimes referred to as the ``Complex Fubini-Study model'', since it arises
from the inner product associated to integration with respect to the complex Fubini-Study metric.
The Gaussian model induced by the inner product alternatively associated with integration with respect to the real Fubini-Study metric has been referred to as the ``Real Fubini-Study model'' \cite{sarnak}.

While an explicit description of the Real Fubini-Study model is more complicated than that of the Complex Fubini-Study model and requires expansions in terms of Legendre polynomials (or more generally Gegenbauer polynomials in higher-dimensions), it has the attractive feature of exhibiting more extreme behavior.
For instance, the average number of equilibria grows quadratically, and probabilistic studies on the first part of Hilbert's sixteenth problem \cite{LLstatistics}, \cite{NazarovSodin2}, \cite{SarnakWigman} show that the number of ovals in a random curve given by the zero set of a random polynomial sampled from the Real Fubini-Study model grows quadratically which is the maximal rate of growth as dictated by the Harnack curve theorem.

It seems likely that the proof of Theorem \ref{thm:main} can be extended to prove a quadratic lower bound on the average number of limit cycles of a random vector field with components sampled from the Real Fubini-Study model.
However, the lower bounds shown in \cite{ChLl} grow faster than quadratically, having an additional factor of $\log d$.
Does the average (over the Real Fubini-Study ensemble) grow faster than quadratically?

\subsection{Zeros of Random Abelian Integrals}\label{sec:abelian}
As indicated in the lemmas of Section \ref{sec:infinitesimal}, some of the methods in the proof of Theorem \ref{thm:sqrt} apply more generally to the study of perturbed Hamiltonian systems
\be
\begin{cases}
\dot{x} = \partial_y H(x,y) + \e p(x,y) \\
\dot{y} = -\partial_x H(x,y) + \e q(x,y)
\end{cases},
\ee
where $H$ is a fixed generic Hamiltonian and $p,q$ are Kostlan random polynomials of degree $d$.
The associated first Melnikov function
\be
\sA(t) = \int_{C_t} p \, dy - q \, dx
\ee
for this problem is an Abelian integral.
Here recall $C_t$ is a connected component of the level set $\{ (x,y) \in \RR^2: H(x,y) = t \}$.
Since the level sets of $H$ can have multiple connected components, the function $\sA$ is multi-valued, and we are interested in the zeros of its real branches.
We can apply Lemma \ref{lemma:EK} along each (real) branch and collect the results to obtain an exact formula for the expected number of real zeros of $\sA$.
Asymptotic analysis, on the other hand, is delicate
and will be carried out in a forthcoming work.

\subsection{Random Li\'enard Systems}
Smale posed the problem \cite{Smale1998} of estimating the number of limit cycles for the special class of vector fields
\begin{equation}\label{eq:Lienard}
F(x,y) = \binom{y - f(x)}{-x},
\end{equation}
where $f(x)$ is a real polynomial of odd degree $2k+1$ and satisfying $f(0)=0$.

Without requiring a smallness assumption on $f$, the trajectories of \eqref{eq:Lienard} retain the same topological structure as the perturbed center focus; there is a single equilibrium at the origin and the trajectories wind around this point.  In particular, the Poincar\'e map is always globally defined along the entire positive $x$-axis.

For this reason, the following probabilistic version of Smale's problem seems to provide a non-perturbative problem that is more approachable (perhaps using methods of \cite{Brudnyi1}, \cite{Brudnyi2}) than the one mentioned at the end of Section \ref{sec:KSS}.

\begin{prob}
Determine an upper bound on the expected global number of limit cycles of the vector field \eqref{eq:Lienard}
where $f$ is a random univariate polynomial.
\end{prob}

The outcome will depend on the choice of model from which $f$ is sampled.

\subsection{Limit cycles on a cylinder}
A problem of Pugh,
with a revision suggested by Lins-Neto \cite{LN}, asks to study the number of solutions of the one-dimensional differential equation with boundary condition
\be\label{eq:cylinder}
\frac{dx}{dt} = f(t,x), \quad x(0)=x(1) ,\ee
with $f$ a polynomial in $x$ whose coefficients are analytic $1$-periodic functions in $t$.
Pugh's original problem asked for an upper bound depending only on the degree in $x$. After presenting counter-examples, Lins-Neto proposed to take coefficients that are trigonometric polynomials in $t$ and asked for an upper bound depending on both the degrees in $x$ and $t$.

Solutions of \eqref{eq:cylinder} can be viewed as limit cycles on a cylinder.

We pose a randomized version of the problem where we take 
\be
f(t,x)=\sum_{\lambda_i \leq \Lambda} a_i \phi_i(t,x)
\ee
to be a Gaussian band-limited function, i.e., a truncated eigenfunction expansion.  The basis $\{\phi_i\}$ consists of Laplace eigenfunctions of the cylinder $\phi_i$ with eigenvalue $\lambda_i$, and we consider the frequency cut-off $\Lambda$ in place of degree as the large parameter in this model. 
The random function $f$ is periodic in $t$ and translation-invariant in $x$.

\begin{prob}
Let $f(t,x)$ be a Gaussian band-limited function with frequency cut-off $\Lambda$.
Study the average number $\EE N_\Lambda([a,b])$ of solutions of \eqref{eq:cylinder} over a finite interval $x(0) \in [a.b]$.
\end{prob}

Assuming finiteness of this average number, the translation-invariance in $x$ implies that the ``first intensity'' of limit cycles is constant, i.e., we have
\be
\EE N_\Lambda([a,b]) = (b-a) F(\Lambda).
\ee
Based on the natural scale provided by the wavelength $1/\Lambda$, a na\"ive guess is that $F(\Lambda) \sim C\cdot\Lambda$ as $\Lambda \rightarrow \infty$, for some constant $C>0$.

\subsection{High-dimensional Systems and the May-Wigner Instability}
Studies in population ecology indicate that an ecological system with a large number of species
is unlikely to admit stable equilibria.
This phenomenon is referred to as the  \emph{May-Wigner instability} and was first observed by May
\cite{MAY1972} in linear settings to be a consequence of Wigner's semi-circle law from random matrix theory.  An updated and more refined treatment comparing several random matrix models is provided in \cite{Allesina2012}, and
a non-linear version of the May-Wigner instability (yet still based on random matrix theory) is presented in
\cite{Fyodorov}.
All of the results in this area concern the local stability of equilibria. However, as pointed out in \cite{Allesina2015}, a lack of stable equilibria does not preclude the persistence of a system; the coexistence of populations can be achieved through the existence of stable limit cycles and other
stable invariant sets.

This naturally leads to the following problem.

\begin{prob}
Study the existence of stable limit cycles and other stable structures
in high-dimensional random vector fields.
Find an asymptotic lower bound on the probability that at least one stable forward-invariant set exists as the dimension becomes large.
\end{prob}

It may be useful to adapt elements from the proof of Theorem \ref{thm:main},
while replacing the transverse annulus with an appropriate trapping region.
However, additional tools will be required to address the high-dimensional nature of the problem.

\bibliographystyle{abbrv}
\bibliography{H16}

\end{document}